\documentclass[11pt,reqno]{amsart}
\usepackage{amsmath,amssymb,amsthm,geometry,graphicx,mathrsfs,cite}
\usepackage[dvipsnames]{xcolor}
\usepackage{hyperref}

\hypersetup{
    linkcolor = violet,
    citecolor = NavyBlue,
}

\newtheorem{theorem}{Theorem}[section]

\newtheorem{conjecture}[theorem]{Conjecture}
\newtheorem{example}[theorem]{Example}
\newtheorem{lemma}[theorem]{Lemma}
\newtheorem{proposition}[theorem]{Proposition}
\newtheorem{corollary}[theorem]{Corollary}

\newcommand{\Seven}{S_{\mathrm{even}}}
\newcommand{\Sodd}{S_{\mathrm{odd}}}

\theoremstyle{remark}
\newtheorem*{remark}{Remark}

\newcommand{\psibar}{\overline{\psi}}
\newcommand{\xibar}{\overline{\xi}}
\newcommand{\exactdivide}{\mid\!\mid}
\newcommand{\qr}[2]{\genfrac{(}{)}{}{}{#1}{#2}}
\newcommand{\Q}{\mathbb{Q}}

\newcommand{\G}{\mathcal{G}}

\renewcommand{\mod}[1]{{\ifmmode\text{\rm\ (mod~$#1$)}\else\discretionary{}{}{\hbox{ }}\rm(mod~$#1$)\fi}}

\numberwithin{equation}{section}

\newcommand{\abs}[1]{\left\lvert#1\right\rvert}

\author{J.~W.~Bober, Z.~Ge, M.~B.~Milinovich}

\date{\today}

\title[Biases in gaps between zeros of Dirichlet $L$-functions]{Biases in gaps between zeros of Dirichlet $L$-functions}

\begin{document}

\begin{abstract}
%We describe a thin family of Dirichlet $L$-functions that provably have an unusual value distribution. Experimentally, we find that the $L$-functions in this family have a significant and previously undetected bias in the distribution of gaps between their zeros. These phenomena appear to have an arithmetic explanation that corresponds to the nonvanishing of a certain Gauss-type sum. We give a complete classification of the characters for which this sum is nonzero and count the number of corresponding characters. It turns out that this Gauss-type sum vanishes for 100\% of primitive Dirichlet characters, so $L$-functions in our newly discovered family are rare (a set of density zero among all primitive characters). The nonvanishing of this Gauss-type sum implies that there is a main term when one integrates the $Z$-function associated to a Dirichlet $L$-function over a long interval. We prove that the $L$-functions in this thin family are the only GL$(1)/\mathbb{Q}$ or GL$(2)/\mathbb{Q}$ cuspidal automorphic $L$-functions whose $Z$-functions have a ``large'' mean value when integrated.
We describe a new family of Dirichlet $L$-functions that provably have an unusual value distribution. Experimentally, we find that the $L$-functions in this family have a significant and previously undetected bias in the distribution of gaps between their zeros. These phenomena appear to have an arithmetic explanation that corresponds to the nonvanishing of a certain Gauss-type sum. We give a complete classification of the characters for which this sum is nonzero and count them. This sum vanishes for 100\% of primitive Dirichlet characters, so $L$-functions in our newly discovered family are rare (a set of density zero among all primitive characters). The nonvanishing of this sum implies that there is a main term when one integrates the $Z$-function associated to a Dirichlet $L$-function over a long interval. We prove that the $L$-functions in this family are the only GL$(1)/\mathbb{Q}$ or GL$(2)/\mathbb{Q}$ cuspidal automorphic $L$-functions whose $Z$-functions have a ``large'' mean value when integrated. We discuss analogous results for the Riemann zeta-function.
\end{abstract}

%A.~Ivi\'c conjectured that the integral of Hardy's $Z$-function from $0$ to $T$ is $O(T^{1/4})$ and $\Omega_{\pm}(T^{1/4})$ as $T\to\infty$. This conjecture has proved recently by M.~A.~Korolev and M.~Jutila independently. In this paper, we study the analogous problem for $Z$-functions associated to general $L$-functions. We show that the analogous conjecture is false for the $Z$-functions of certain ``special" Dirichlet $L$-functions. In particular, we show that the integral of the $Z$-function of a Dirichlet $L$-functions from $0$ to $T$ is asymptotic to $C_\chi T^{3/4}+O(T^{1/4+\varepsilon})$ and we classify precisely when the constant $C_\chi$ is nonzero. Somewhat surprisingly, numerical evidence seems to suggest that the unexpectedly large mean value is caused by a currently unexplained parity bias in the gaps between the zeros of these ``special" Dirichlet $L$-functions.

\maketitle

\section{Introduction}

Let $\chi$ (mod $q$) be a primitive Dirichlet character. In this paper, we investigate what appears to be a deep connection between the distribution of the nontrivial zeros of the Dirichlet $L$-function $L(s,\chi)$, and the Gauss-type sum
\begin{equation}\label{Schi}
S(\chi)=\sum_{n=1}^{2q}\chi(n)\,e\!\left(\frac{-n^2}{2q}\right),
\end{equation}
where as usual we let $e(x)=e^{2\pi i x}$. Notice that $\chi$ is periodic (mod $q$) and the additive character is periodic (mod $2q$), so $S(\chi)$ is a quadratic Gauss sum (mod $2q$) twisted by an imprimitive character. In a natural sense, we prove that $S(\chi)=0$ for 100\% of primitive characters. However, we prove that there is an infinite (but zero-density) family of characters such that $S(\chi)\ne 0$. Proposition \ref{prop:S1} gives a complete classification of this thin family of Dirichlet characters. 

\smallskip

In Theorem \ref{thm:GL1}, we show that the integral of the $Z$-function,
$Z_\chi(t)$, associated to the character $\chi$ has a main term when
$S(\chi)\ne 0$ that is not present if $S(\chi) = 0$. %For this reason, if $S(\chi)\ne 0$, then we say that the character $\chi$ is {\it $Z$-biased}.
Moreover, when $S(\chi)\ne 0$ it appears that there is a significant and
previously unobserved bias in the gaps between the nontrivial zeros of
$L(s,\chi)$. Experimentally, we give numerous examples of this newly discovered
bias. % in Section XXX.

\subsection{\texorpdfstring{The Hardy $Z$-function}{The Hardy Z-function}}

The sum $S(\chi)$ naturally arises when considering the average of
$Z_\chi(t)$. To describe
this result, let us first recall how to define the Hardy $Z$-function. The functional
equation for the Riemann zeta-function, $\zeta(s)$, is given by \[
\Gamma_\mathbb{R}(s) \, \zeta(s) = \Gamma_\mathbb{R}(1\!-\!s) \, \zeta(1\!-\!s),
\]
where  $\Gamma_\mathbb{R}(s)=\pi^{-{s/2}} \, \Gamma(s/2)$. The Hardy $Z$-function is defined by
\[
Z(t) = 
\left( 
\frac{\Gamma_\mathbb{R}(\frac{1}{2}\!+\!it)}
{\Gamma_\mathbb{R}(\frac{1}{2}\!-\!it)}\right)^{1/2} \zeta(\tfrac{1}{2}\!+\!it).
\]
By the functional equation, we observe that $\overline{Z(t)} = Z(t)$, so
$Z(t)$ is real-valued for real $t$. Moreover,  $|Z(t)| =
|\zeta(\tfrac{1}{2}+it)|$, so the zeros of $Z(t)$ correspond to zeros of
$\zeta(s)$ on the critical line, $\mathrm{Re}(s)=1/2$, and $Z(t)$ changes sign
when $t$ is an ordinate of a zero of $\zeta(s)$ on the critical line of odd
multiplicity. This makes $Z(t)$ useful for understanding the distribution of
the nontrivial zeros of $\zeta(s)$. For example, if we define the integral
\begin{equation}\label{int:Z}
F(T) = \int_0^{T}Z(t)\, \mathrm{d}t,
\end{equation}
then the estimate $F(T) = o(T),$ as $T\to \infty$, implies Hardy's theorem that
infinitely many nontrivial zeros of the zeta function are on the critical line
(see \cite[Section 10.5]{titchmarsh}).
%{(\color{red} Insert a picture of $Z(t)$? We probably don't need this)}

\smallskip

One might expect a great deal of cancellation in the integral $F(T)$, as
$T\to\infty$, since $Z(t)$ oscillates in sign. Indeed,
Titchmarsh \cite[Eq.~10.5.1]{titchmarsh} proved that $F(T)=O(T^{7/8})$.
Ivi\'{c} \cite{ivic} dramatically improved this by showing that
$F(T)=O_\varepsilon(T^{1/4+\varepsilon})$ for all $\varepsilon>0$. In the same
paper, Ivi\'c  conjectured that in fact  
\begin{equation}\label{ivic}
F(T)=O(T^{1/4}) \quad \text{and}  \quad F(T)=\Omega_\pm(T^{1/4}).
\end{equation} 
Ivi\'c's conjecture has been independently established by Korolev \cite{korolev1,korolev2} and Jutila \cite{jutila1, jutila2} in a rather precise way. Let
$\theta_0=\min\big(|\{\sqrt{T/2\pi}\}-\tfrac14|,\,|\{\sqrt{T/2\pi}\}-\tfrac34|\big)$.
Jutila \cite{jutila2} proved that if $\theta_0\neq 0$, then
\[
\int_{0}^{T}Z(t)\,\mathrm{d}t = \left(\frac{T}{2\pi}\right)^{1/4} (-1)^{\lfloor\sqrt{T/2\pi}\rfloor} \, K\!\left(\Big\{\sqrt{T/2\pi}\Big\}\right) + O\big(T^{1/6}\log T\big) + O\big(\min\{T^{1/4},\,T^{1/8}\theta_0^{-3/4}\}\big),
\]
where $K(x)=2\pi$ for $\frac14<x<\frac34$, and $K(x)=0$ for $0\le x<\frac14$ and for $\frac34<x<1$ (see Figure \ref{Fplot}). He also proved that if $\theta_0=0$, then the same formula holds with error term $O(T^{1/6}\log T)$, where $K(\frac14)=\frac{4\pi}{3}$ and $K(\frac34)=\frac{2\pi}{3}$.

\begin{figure}
        \centering
        \includegraphics[scale=0.8]{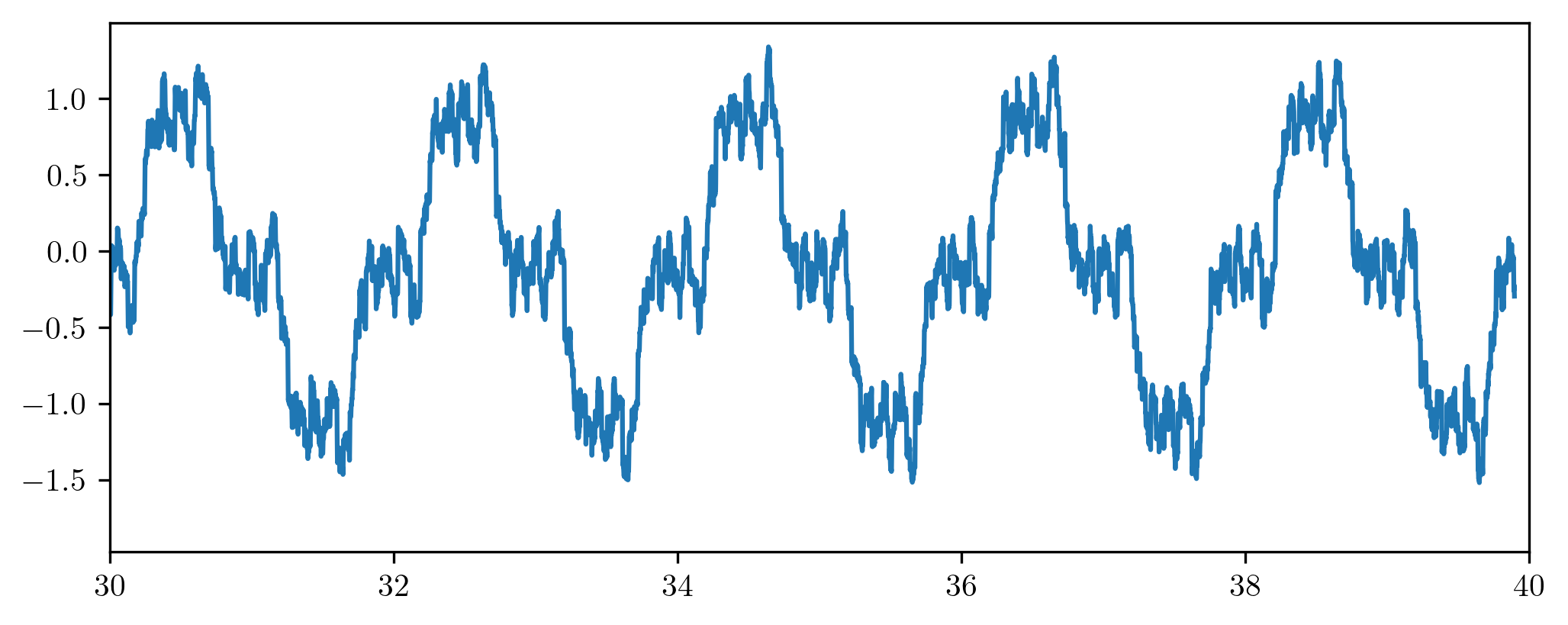} 
        \caption{A plot of $\displaystyle \frac{1}{2\pi \sqrt{T}}F\big( 2\pi T^2 \big)$ for the function $F(T)$ defined in \eqref{int:Z}.}
        \label{Fplot}
\end{figure}

\subsection{\texorpdfstring{The $Z$-function associated to a Dirichlet $L$-function}{The Z-function associated to a Dirichlet L-function}}
Let $\chi$ (mod $q$) be a primitive Dirichlet character.  Then $L(s,\chi)$
satisfies the  functional equation, in asymmetric form,
\[
L(s,\chi) = 
\mathcal{G}_\chi(s) \, L(1\!-\!s,\overline{\chi}),
\quad \textup{where}
\quad
\G_\chi(s) 
= \varepsilon_\chi \,q^{1/2-s} 
\frac{\Gamma_{\mathbb{R}}(1\!-\!s\!+\!\mathfrak{a})}{\Gamma_{\mathbb{R}}(s\!+\!\mathfrak{a})}.
%=\frac{\tau(\chi)}{q^{1/2}}q^{1/2-s}\frac{\pi^{(1-s)/2}\Gamma((1-s)/2)}{\pi^{s/2}\Gamma(s/2)}.
\]
Here, $\mathfrak{a}\in\{0,1\}$ is such that $\chi(-1)=(-1)^{\mathfrak{a}}$ and $\varepsilon_\chi =i^{-\mathfrak{a}} \tau(\chi)/\sqrt{q}$,
where
\[
\tau(\chi) = \sum_{n=1}^q \chi(n) \, e\Big(\frac{n}{q}\Big)
\]
is the classical Gauss sum satisfying $|\tau(\chi)|=\sqrt{q}$, so that $|\varepsilon_\chi|=1$. In analogy with the definition of $Z(t)$ for the Riemann zeta-function, we define
\[
Z_\chi(t) = \G_\chi(\tfrac{1}{2}\!+\!it)^{-1/2}L(\tfrac{1}{2}\!+\!it,\chi).
\]
By the functional equation, it again follows that $\overline{Z_\chi(t)} =
Z_\chi(t)$, so $Z_\chi(t)$ is real-valued for real $t$. Moreover,  $|Z_\chi(t)|
= |L(\tfrac{1}{2}+it,\chi)|$, so the zeros of $Z_\chi(t)$ correspond to zeros
of $L(s,\chi)$ on the critical line and $Z_\chi(t)$ changes sign when $t$ is an
ordinate of a zero of $L(s,\chi)$ on the critical line of odd multiplicity. 
A well-known conjecture of Chowla \cite{chowla} is that $L(\frac{1}{2},\chi) \ne
0$ when $\chi$ is a primitive quadratic character. It is now generally believed that $L(\frac{1}{2},\chi) \ne
0$ for all primitive characters $\chi$, which would imply that $Z_\chi(0)\ne 0$. When $Z_\chi(0) \ne 0$, we choose
a branch of $\G_\chi(s)^{-1/2}$ so that $Z_\chi(0) > 0$. For quadratic characters, this means that $Z_\chi(0) = L(1/2,\chi) >0$ assuming Chowla's conjecture and GRH. The usual square root for the Hardy $Z$-function is chosen so that $Z(0) = \zeta(1/2)<0$. In both cases, the value of the $Z$-function at $0$ is chosen to agree with the central value of the $L$-function.

%{\color{blue}An application of Waldspurger's formula (see \cite[Proposition 2.2]{jls}) implies $L(1/2,\chi)\geq 0$ for all primitive quadratic $\chi$. Moreover, the classical conjecture of Chowla \cite{chowla} asserts that $L(\frac{1}{2},\chi) \ne 0$ for all primitive quadratic character $\chi$. Thus, a natural choice of branch of $\tau(\chi)^{1/2}$ (hence $\G_\chi(s)^{-1/2}$), for primitive quadratic $\chi$, would satisfy $Z_\chi(0)=L(1/2,\chi)>0$. Throughout the paper, for general primitive character $\chi$ with $Z_\chi(0)\neq 0$, we choose the branch of $\tau(\chi)^{1/2}$ so that $Z_\chi(0) > 0$.}

\smallskip

Work of Rudnick and Sarnak \cite{rs} suggests that $n$-level correlations of the ordinates of the nontrivial zeros of an $L$-function associated to {\it any} cuspidal automorphic representation of GL$(m)$ over $\mathbb{Q}$ should correspond to $n$-level correlations of eigenvalues of random matrices in the Gaussian unitary ensemble (GUE). This led Keating and Snaith \cite{ks1, ks2} to suggest that the value distribution of any principal automorphic $L$-function on its critical line is related to that of the characteristic polynomials of random unitary matrices. An explicit description of this correspondence in terms of $Z$-functions is given by Conrey, Farmer, Keating, Rubinstein, and Snaith in \cite[Section 1]{cfkrs}. Hence, using the random matrix model for $L$-functions, one might expect an analogue of Ivi\'c's conjecture for $F(T)$ in \eqref{ivic} to hold for the integral average of $Z_\chi(t)$ for any primitive character $\chi$. In other words, is it true that
\[
\int_0^T Z_\chi(t) \, \mathrm{d}t = O_q(T^{1/4})  \quad \text{and} \quad  \int_0^T Z_\chi(t) \, \mathrm{d}t = \Omega_{\pm}(T^{1/4})?
\]
When $S(\chi) \ne 0$, the first estimate fails (and the absolute value of the integral is $\gg_q T^{3/4}$).

\begin{theorem}\label{thm:GL1}
Let $\chi$ be a primitive Dirichlet character \textup{(mod }$q$\textup{)}. As
$T \rightarrow \infty$,  we have
\begin{equation} \label{integralZ}
\int_{0}^{T}Z_{\chi}(t) \, \mathrm{d}t
=
C_\chi \,
  T^{\frac{3}{4}} + O_q(T^{\frac{1}{4}} \log T),
\end{equation}
where 
\begin{equation}\label{C}
C_\chi=\frac{2^{\frac{3}{4}} \pi^{\frac{1}{4}}
  e\!\left(\frac{1}{16}\right)}{3\,  q^{\frac{1}{2}} \tau(\chi)^{\frac{1}{2}}} \, S(\chi)
\end{equation}
and $S(\chi)$ is defined in \eqref{Schi}. 
Moreover, there are infinitely many $\chi$ for which $S(\chi)\ne 0$.
\end{theorem}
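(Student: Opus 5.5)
We follow the Titchmarsh–Ivić approach for $Z(t)$, adapted to $L(s,\chi)$. Write $Z_\chi(t)=\G_\chi(\tfrac12+it)^{-1/2}L(\tfrac12+it,\chi)$ and start from a smoothed approximate functional equation. Alternatively, we use the Dirichlet-series representation at $\sigma=1+\varepsilon$ after shifting contours: integrate $\G_\chi(s)^{-1/2}L(s,\chi)$ along the vertical segment $\mathrm{Re}(s)=\tfrac12$ from height $0$ to $T$, and move it to $\mathrm{Re}(s)=1+\varepsilon$. This produces horizontal-segment contributions, which by convexity bounds and the size $|\G_\chi(\sigma+it)^{-1/2}|\asymp (qt/2\pi)^{(\sigma-1/2)/2}$ are $O_q(T^{1/4+\varepsilon})$; we will refine this to $T^{1/4}\log T$ later. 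On the line $\mathrm{Re}(s)=1+\varepsilon$ we expand $L(s,\chi)=\sum\chi(n)n^{-s}$ and obtain
\[
\sum_n \chi(n)\int \G_\chi(s)^{-1/2} n^{-s}\,\mathrm{d}t .
\]
By Stirling, $\G_\chi(\tfrac12+it)^{-1/2}=\varepsilon_\chi^{-1/2}\exp\bigl(i(\tfrac t2\log\tfrac{qt}{2\pi e}+\tfrac{\pi}{8}\cdot\text{const})\bigr)(1+O(1/t))$. Each $n$-integral is therefore an oscillatory integral with phase $\phi_n(t)=\tfrac t2\log\tfrac{qt}{2\pi e}-t\log n$. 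Its stationary point is at $t_n=2\pi n^2/q$, with $\phi_n''(t_n)=1/(2t_n)$.

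The key step is the saddle-point evaluation. For $t_n\le T$, i.e.\ $n\le \sqrt{qT/2\pi}$, each term contributes
\[
\chi(n)\,\varepsilon_\chi^{-1/2} n^{-1/2}\sqrt{2\pi\cdot 2t_n}\,e^{i\phi_n(t_n)}e(1/8)\bigl(1+O(1/t_n)\bigr),
\]
where $e^{i\phi_n(t_n)}=e^{-i\pi n^2/q}=e(-n^2/(2q))$. The amplitude $\sqrt{t_n}\,n^{-1/2}\asymp n^{1/2}q^{-1/2}$ grows, so the sum
\[
\sum_{n\le N}\chi(n)\,e(-n^2/2q)\,n^{1/2}, \qquad N=\sqrt{qT/2\pi},
\]
is the main object. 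The summand's arithmetic part is periodic modulo $2q$, so partial summation gives
\[
\frac{S(\chi)}{2q}\cdot\frac23 N^{3/2}+O_q(N^{1/2}),
\]
and $N^{3/2}\asymp T^{3/4}$. Collecting constants ($\varepsilon_\chi^{-1/2}=i^{\mathfrak a/2}q^{1/4}\tau(\chi)^{-1/2}$, the $e(1/16)$ coming from combining $e(1/8)$ with the Stirling phase, and the factor $\tfrac23$ that becomes the $\tfrac13$) should reproduce $C_\chi$ exactly. This requires careful bookkeeping of the branch of $\G_\chi^{-1/2}$ and of $i^{\mathfrak a}$; presumably the $\mathfrak a$-dependence cancels against the parity $\chi(-1)$ hidden in $S(\chi)$.

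The main obstacle is controlling the error terms to $O_q(T^{1/4}\log T)$. There are three sources:
\begin{itemize}
\item terms with $t_n$ near $T$ (boundary of the stationary phase), which give the $T^{1/4}$-size oscillating terms as in Jutila's formula;
\item non-stationary terms with $t_n>T$, handled by first-derivative bounds $\ll n^{-1/2}\,T^{1/2}/|\log(n/N)|$ summed with a $\log T$ loss;
\item the $O(1/t_n)$ corrections, giving $\sum n^{-1/2}\cdot n\cdot n^{-2}\ll 1$.
\end{itemize}
For these we would use the standard lemmas, e.g.\ Titchmarsh Lemmas 4.3 and 4.6 or Atkinson's saddle-point lemma, as in Ivić's treatment. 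To avoid the $\varepsilon$ from the horizontal segments, we would shift only to $\mathrm{Re}(s)=1+1/\log T$.

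Finally, for infinitely many $\chi$ with $S(\chi)\ne0$, Proposition~\ref{prop:S1} gives the classification. Alternatively, we exhibit a direct family. If $q$ is odd, CRT splits the sum modulo $2q$ as a product of a mod-$2$ factor and a mod-$q$ factor. For even $q$, e.g.\ $q=4m$ with suitable $\chi$, we compute $S(\chi)$ explicitly. The simplest check: for $q=4$ and the odd character $\chi_{-4}$, $S(\chi)=\sum_{n\le 8}\chi(n)e(-n^2/8)=4e(-1/8)-4e(-9/8)\cdot$(sign), and direct evaluation shows it is nonzero. We then extend to infinitely many $q$ via multiplicativity (twisted multiplicativity of Gauss sums under CRT), taking $q=4p$ with $\chi=\chi_{-4}\psi$, $\psi$ mod an odd prime $p$. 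The $p$-part is then a twisted quadratic Gauss sum $\sum_{x\bmod p}\psi(x)e(ax^2/p)$, which is nonzero whenever $\psi$ is quadratic.
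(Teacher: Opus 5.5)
Your analytic argument follows the paper's proof. Both shift from $\mathrm{Re}(s)=\tfrac12$ to $c=1+1/\log T$, control the horizontal segments by Phragm\'en--Lindel\"of, expand the Dirichlet series, apply Stirling and stationary phase, and finish with partial summation $\sum_{n\le x}\chi(n)\sqrt n\,e(-n^2/2q)=\frac{S(\chi)}{3q}x^{3/2}+O(x^{1/2})$. Your constant bookkeeping does reproduce $C_\chi$.

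Two slips there should be fixed:
\begin{enumerate}
\item Work on dyadic blocks $[T,2T]$ with $T$ large, as the paper does, rather than from height $0$. For even $\chi$, $\G_\chi(s)^{-1/2}$ has a branch point at $s=1$, and Stirling is unavailable at bounded height.
\item On $\mathrm{Re}(s)=c$ the amplitude is $\asymp n^{-c}T^{1/4}$, so the first-derivative bound is $\ll n^{-c}T^{1/4}/|\log(n/N)|$, which sums to $T^{1/4}\log T$. The bound $n^{-1/2}T^{1/2}/|\log(n/N)|$ you wrote would only give $T^{3/4}\log T$, which swamps the main term.
\end{enumerate}

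The genuine gap is the final assertion that infinitely many $\chi$ have $S(\chi)\ne0$. Your explicit family is wrong.
\begin{itemize}
\item Replacing $n$ by $-n$ in \eqref{Schi} gives $S(\chi)=\chi(-1)S(\chi)$, so $S(\chi_{-4})=0$. Directly: $n^2\equiv 1 \pmod 8$ for odd $n$, so $S(\chi_{-4})=e(-1/8)\sum_{n\le 8}\chi_{-4}(n)=0$. This also contradicts your own expectation that the $\mathfrak a$-dependence is absorbed by the parity of $\chi$.
\item More seriously, $q=4p$ can never work, whatever $\psi$ is. After the CRT splitting, the $2$-adic factor is $\sum_{x \bmod 8}\chi_{-4}(x)\,e(-ax^2/8)=e(-a/8)\sum_{x \bmod 8}\chi_{-4}(x)=0$ for odd $a$.
\item In fact $S(\chi)=0$ whenever $8\nmid q$. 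By Lemma~\ref{square}, nonvanishing forces $\chi=\psi^2$ with $\psi$ modulo $2q$, and a primitive character mod $4$ is not the square of a character mod $8$, since all characters mod $8$ have order at most $2$.
\end{itemize}

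The fix is to take $q=8p$ with $p\ge 5$ prime and $\chi=\chi_8\chi_p$, where $\chi_p$ is a nontrivial even character mod $p$.
\begin{itemize}
\item The $2$-adic factor is $\sum_{x \bmod 16}\chi_8(x)\,e(-ax^2/16)=4e(-a/16)-4e(-9a/16)=8e(-a/16)\ne 0$ for odd $a$.
\item The $p$-adic factor is $S_2(\psi_p,r)=\psibar_p(r)\big(\tau(\psi_p)+\epsilon_p(r)\tau(\epsilon_p\psi_p)\big)$ with $\psi_p^2=\chi_p$. It is nonzero by Evans' theorem or the Jacobi-sum argument in Proposition~\ref{prop:S2-oddprime}.
\end{itemize}
This is exactly how the paper concludes: $N(8p)=\frac{p-3}{2}>0$ for every prime $p\ge 5$, by Theorem~\ref{thm:counting_q}.
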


In Theorem \ref{thm:GL1}, $\tau(\chi)^{1/2}$ denotes the square root determined by the branch of $\mathcal{G}_\chi(s)^{-1/2}$ used to define $Z_\chi(t)$. In particular, if $L(\frac12,\chi)\ne0$, then it is the square root for which $Z_\chi(0)>0$.

\smallskip

%\begin{figure}
%        \centering
%        \includegraphics[scale=0.35]{mod8} 
%        \caption{A plot of $\displaystyle \int_0^T Z_\chi(t) \, \mathrm{d}t \,$ for the two primitive characters (mod $8$).}
%        \label{mod8plots}
%\end{figure}

%\begin{figure}
%        \centering
%        \includegraphics[scale=0.35]{Z8normalized} 
%        \caption{A plot of $\displaystyle \frac{1}{T^{3/4}}\int_0^T Z_\chi(t) \, \mathrm{d}t \,$ for the even primitive character (mod $8$). Note that $\displaystyle{\frac{2^{3/2} \pi^{1/4}}{3}=1.255195\ldots}$}
%         \label{mod8plots2}
%\end{figure}

\begin{figure}
    \includegraphics[width=\textwidth]{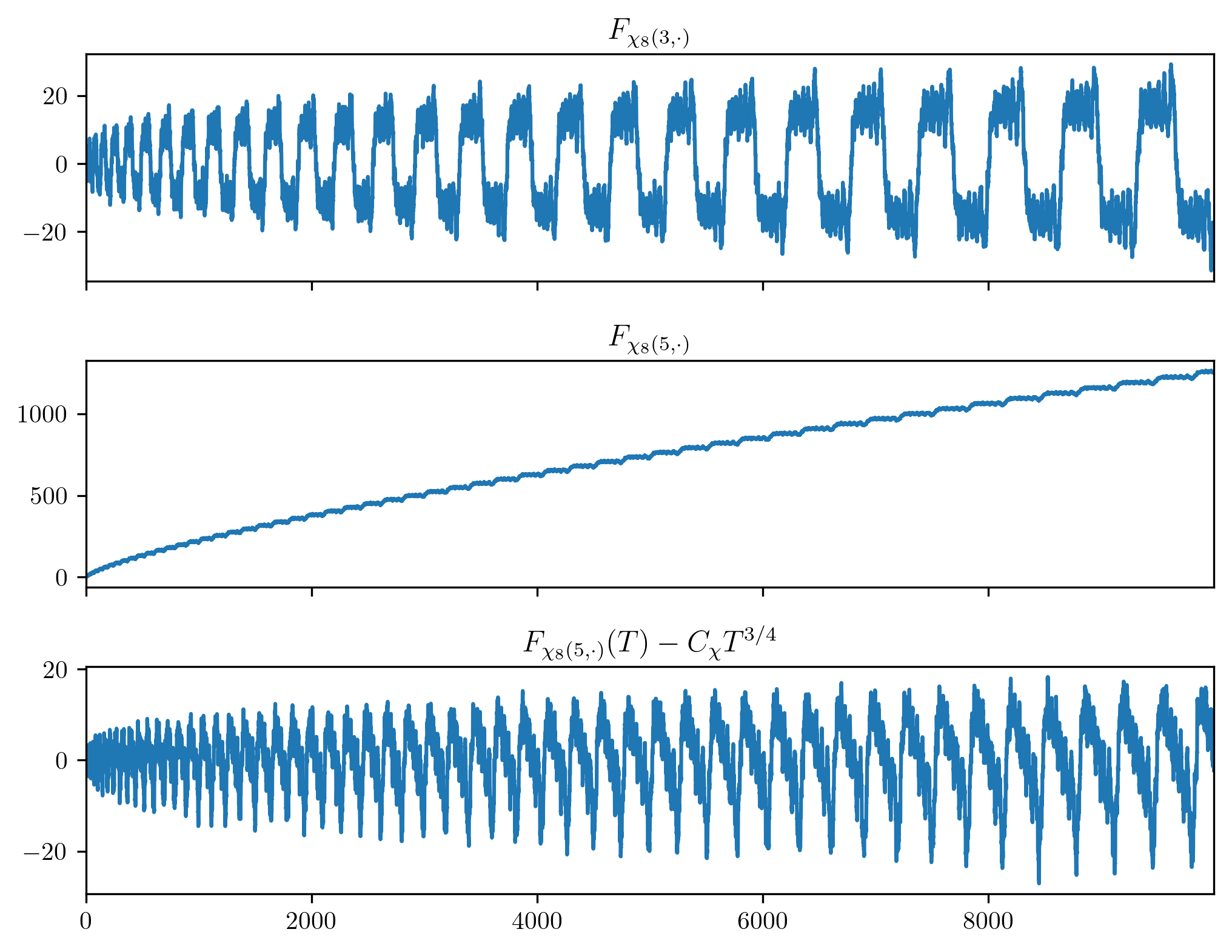}
    \caption{$F_\chi(T) := \int_0^T Z_{\chi}(t)\,\mathrm{d}t$ for the odd (top)
        and even (middle) primitive characters mod $8$, and the even primitive
        character (bottom) with the main term subtracted.
        Here $C_{\chi_8(5, \cdot)} = \displaystyle{\frac{2^{3/2} \pi^{1/4}}{3}=1.255195\ldots}$.
        In this and other examples we use the LMFDB notation for Dirichlet Characters.
        See \cite{lmfdb:character-labels}.}
        \label{mod8plots}
\end{figure}

{\sc Remarks}.

\smallskip

($i$) Since $Z_{\chi}(t)$ is real-valued, we see that the constant $C_\chi$ defined in Theorem \ref{thm:GL1} is real. This is not immediate from the definition of $C_\chi$ and we give a direct proof of this fact in Appendix \ref{appendix}.

\smallskip

($ii$) Notice that $C_\chi \ne 0$ if and only if $S(\chi)\ne 0$. In Section
\ref{section:distribution}, we give another evaluation of $C_\chi$ in the case
where it is nonzero, and speculate on its distribution.

\smallskip
 
($iii$) We classify the primitive characters $\chi$ for which $S(\chi)\ne 0$ in Proposition \ref{prop:S1} and then use this classification to count the number of such $\chi$ in Theorem \ref{thm:counting_q} and Corollary \ref{sd}. 

\smallskip

%($iii$) In the case where $S(\chi)$ is nonzero there will always be a primitive
%character $\psi$ mod $2q$ for which $\psi^2 = \chi$. Suppose that
%$q = 2^a q_1 q_2$, where $q_1$ and $q_2$ are odd and coprime, $q_1$ is square-full
%and $q_2$ is square-free, and that $\psi = \prod_{p|q}\psi_p$, where each $\psi_p$
%is a character mod $p^{v_p(q)}$. Then
%\[
%C_\chi = \pm \frac{2^{\frac{3}{4} + a' + \omega(q_1q_2) } \pi^{\frac{1}{4}}}{3\, q^{\frac{1}{4}}}
%    \prod_{p|q_2}
%        \left(\mathrm{Re}\left(\frac{J(\psi_p,\epsilon_p)}{\tau(\epsilon_p)}\right) \pm 1\right),
%\]
%where $\epsilon_p$ is the quadratic character mod $p$ and $J(\psi_p, \epsilon_p)$
%is the Jacobi sum.
 
($iv$) We have not tried to optimize the error term in \eqref{integralZ}. Instead we keep our proof relatively simple and we focus our attention on classifying the characters for which $S(\chi)\ne 0$. With more work, we can show that the error term in \eqref{integralZ} is $O_q(T^{1/4})$ and $\Omega_\pm(T^{1/4})$, as discussed at the end of this subsection.

\smallskip
 
($v$) There are other situations where averages of $L$-functions over families have lower-order main terms that have not been predicted using random matrix theory models. For example, the work of Diaconu \cite{diaconu} and Diaconu and Whitehead \cite{dw} has shown that there is a lower-order main term in the third moment of quadratic Dirichlet $L$-functions over both number fields and function fields. Also, Florea \cite{florea} has shown that there is a lower-order main term in the first moment of quadratic Dirichlet $L$-functions over function fields (see also the recent work of \v{C}ech \cite{cech} and of Gao and Zhao \cite{gz}). Similar lower-order main terms appear in averages of Hecke $L$-functions attached to $\ell$th order residue symbols. For example, see Hamdar \cite[Remark 1.3 and Conjecture 1.5]{hamdar}. 

\smallskip
 
\begin{example}\label{ex1}
Let $\chi_8$ and $\chi_{-8}$ be the even and odd primitive characters \textup{(mod }$8$\textup{)}, respectively. Then $S(\chi_8)= 8 \, e(-1/16)$ and $S(\chi_{-8})=0$. The character $\chi_8$ \textup{(mod }$8$\textup{)} is the character of smallest modulus for which $S(\chi)\ne 0$. For these characters, Theorem \ref{thm:GL1} gives
\[
\int_{0}^{T}Z_{\chi_8}(t) \, \mathrm{d}t = \frac{2^{3/2}\pi^{1/4}}{3} T^{3/4} + O(T^{1/4}\log T) \quad \text{and} \quad  \int_{0}^{T}Z_{\chi_{-8}}(t) \, \mathrm{d}t = O(T^{1/4}\log T).
\]
These integrals are plotted, as a function of $T$, in Figure \ref{mod8plots}. 
\end{example}

\begin{example}\label{ex2}
    Let $\chi_1$ be the even primitive character \textup{(mod }$16$\textup{)} with
    $\chi_1(5)=-i$, and let $\chi_2, \chi_3,$ and $\chi_4$ denote the other 3
    primitive characters  \textup{(mod }$16$\textup{)}. Then $S(\chi_1) = 16 \,
    e(-1/32)$ and $S(\chi_i)=0$ for $i=2,3,4$. For these characters, Theorem
    \ref{thm:GL1} gives
\[
\int_{0}^{T}Z_{\chi_1}(t) \, \mathrm{d}t = \frac{2^{7/4}\pi^{1/4}}{3} T^{3/4} + O(T^{1/4}\log T) \quad \text{and} \quad  \int_{0}^{T}Z_{\chi_i}(t) \, \mathrm{d}t = O(T^{1/4}\log T), 
\]
for $i=2,3,4$. These integrals are plotted, as a function of $T$, in Figure \ref{mod16plots}.
\end{example}

The four primitive characters (mod 16) come in two conjugate pairs. This example shows that there are primitive characters $\chi$ for which $S(\chi)\ne 0$ and $S(\overline{\chi}) = 0$. By the functional equation, we have 
\[
\int_0^T Z_{\chi}(t) \, \mathrm{d}t = \int_{-T}^0 Z_{\overline{\chi}}(t) \, \mathrm{d}t.
\]
Therefore, for the even character $\chi_1$ (mod 16) with $\chi_1(5)=-i$ in Example \ref{ex2}, we have
\[
\int_{0}^{T}Z_{\chi_1}(t) \, \mathrm{d}t = \frac{2^{7/4}\pi^{1/4}}{3} T^{3/4} + O(T^{1/4}\log T) \quad \text{and} \quad  \int_{-T}^{0}Z_{\chi_1}(t) \, \mathrm{d}t = O(T^{1/4}\log T). 
\]
So the asymptotic behavior of these integrals, as functions of $T$, is not symmetric about $T=0$.

\begin{example}
    There are 9 primitive even characters \textup{(mod 88)}. These are the quadratic
    character and 8 characters of order 10, which can be distinguished by
    the values of $\chi(13)$. There is one Galois orbit for which
    $\chi(13) = e(a/5)$, for $a = 1,2,3,4$ and another Galois orbit
    for which $\chi(13) = e(a/10)$ for $a = 1,3,7,9$. In the first
    orbit, $C_\chi$ is 0. In the second orbit, $C_\chi$ takes the
    values $1.076\ldots, -0.2616\ldots, 1.353\ldots, 0.861\ldots$
    for $a = 1,3,7,9$. This is the smallest modulus for which there is a character $\chi$ with $C_\chi<0$. \textup{(}See Figure \ref{mod16plots}.\textup{)}
%1.07619437869409
%-0.261622704460426
%1.35340148620804
%0.861363849732835
\end{example}

 \smallskip

 We can prove an appropriately modified version of Ivi\'c's conjecture in \eqref{ivic} and show
that
\[
\int_{0}^{T}Z_{\chi}(t) \, \mathrm{d}t - C_\chi \, T^{3/4} = \begin{cases}
           O_q(T^{\frac{1}{4}} ), \quad \text{and}
           \\
          \Omega_{\pm}(T^{1/4}),
       \end{cases}
\]
where $C_\chi$ is the constant defined in \eqref{C}. Such a result is quite technical and is not
the focus of this paper. For example, for $\chi_8$ and $\chi_{-8}$, the even and odd characters \textup{(mod }$8$\textup{)} of Example \ref{ex1}, define
\[
\mathcal K_{\chi_8}(u)=-\pi\Big(\{u\}-\frac12\Big)
\qquad\text{and}\qquad
\mathcal K_{\chi_{-8}}(u)=-\frac\pi2\,(-1)^{\lfloor u+\frac12\rfloor}.
\]
Then we can prove that, for any fixed $\delta>0$,
\[
\int_0^TZ_{\chi_8}(t)\,\mathrm dt=\frac{2^{3/2}\pi^{1/4}}{3}T^{3/4}
+\Big(\frac{4T}{\pi}\Big)^{1/4}\mathcal K_{\chi_{8}}\Big(\sqrt{T/\pi}\Big)
+O_\delta\big(T^{1/6}\log T\big)
\]
when $\sqrt{T/\pi}$ is at distance at least $\delta$ from the integers, and
\[
\int_0^TZ_{\chi_{-8}}(t)\,\mathrm dt=\Big(\frac{4T}{\pi}\Big)^{1/4}
\mathcal K_{\chi_{-8}}\Big(\sqrt{T/\pi}\Big)+O_\delta\big(T^{1/6}\log T\big)
\]
when $\sqrt{T/\pi}$ is at distance at least $\delta$ from $\mathbb{Z}+\frac12$. Normalized versions of these integrals are plotted in Figure \ref{mod8fun}.
Similar results, for all primitive characters, will appear in a companion paper.

\subsection{A bias in the gaps between zeros of \texorpdfstring{$L(s,\chi)$}{L(s,chi)} on the critical line when \texorpdfstring{$S(\chi) \ne 0$}{S(chi) is not 0}} 
If $S(\chi) \ne 0$, then Theorem \ref{thm:GL1} gives
\[
\int_{0}^{T}Z_{\chi}(t) \, \mathrm{d}t=C_\chi \, T^{\frac{3}{4}} + O_q(T^{\frac{1}{4}} \log T)
\]
with $C_\chi \ne 0$. The integral on the left-hand side is real, so $C_\chi$ is real and nonzero. If $C_\chi > 0$ (resp.~$C_\chi < 0$), then $Z_{\chi}(t)$ has a bias towards positive (resp.~negative) values.  There are a number of reasons that this might happen. For instance, $Z_{\chi}(t)$ might get larger in the positive direction than it does in the negative direction. Or it might be the case that the measure of the set where $Z_{\chi}(t)>0$ is larger than the  measure of the set where $Z_{\chi}(t)<0$. Though the first possibility is quite interesting, we experimentally investigated the second possibility. Since $Z_\chi(t)$ conjecturally changes sign at every zero, it is natural to study the spacings between consecutive zeros.

%\begin{figure}

%        \includegraphics[scale=0.35]{mod16} % second figure itself
%        \caption{A plot of $\displaystyle \int_0^T Z_\chi(t) \, \mathrm{d}t \,$ for the four primitive characters (mod $16$).}
%	 \label{mod16plots}
%\end{figure}

\begin{figure}
    \includegraphics[width=\textwidth]{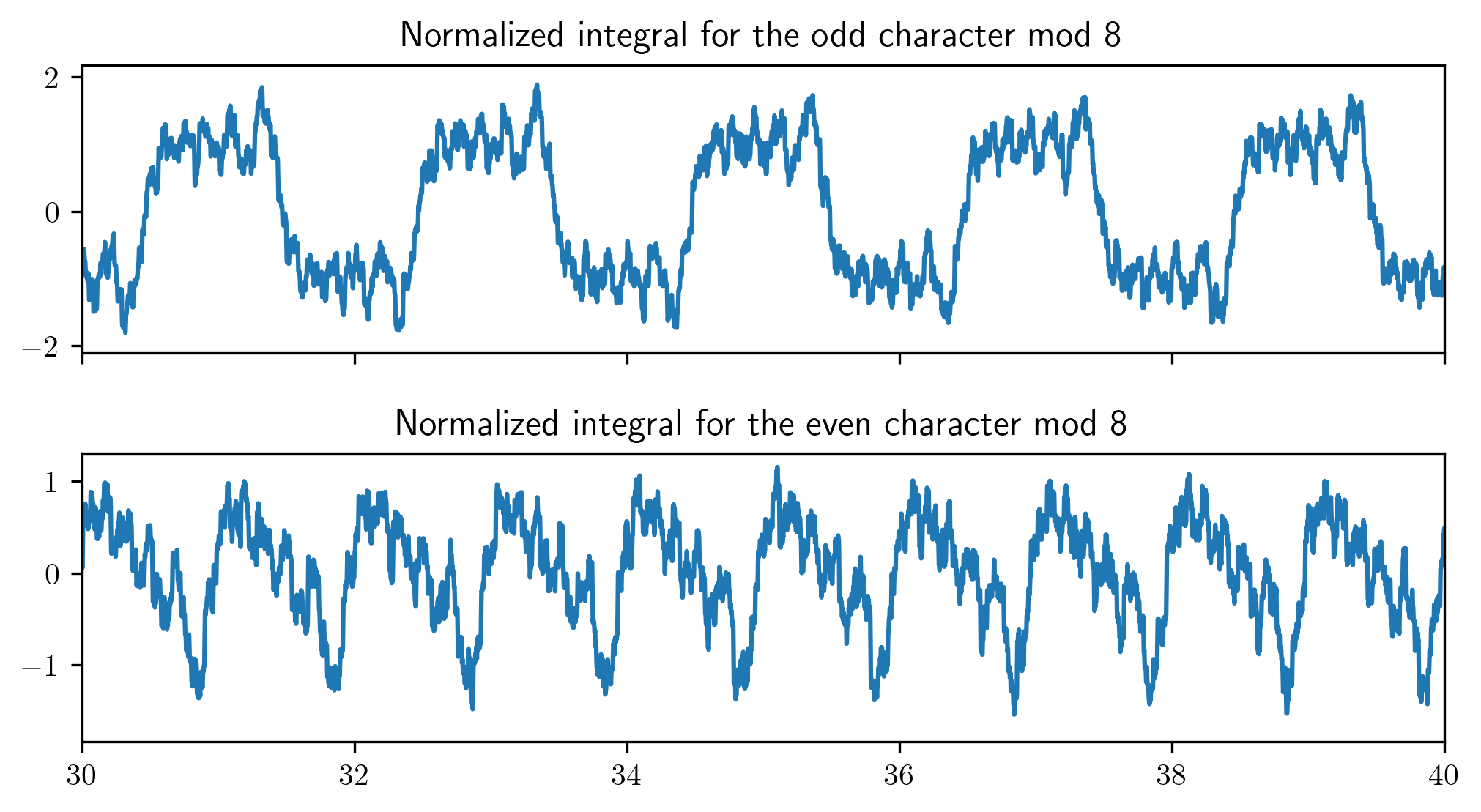}
    \caption{$\frac{\sqrt{2}}{\pi T^{1/2}}\left(F_\chi(\pi T^2) - C_\chi (\pi T^2)^{3/4}\right)$
        for the odd (top) and even primitive characters mod 8, where
        $F_\chi(T) := \int_0^T Z_{\chi}(t)\,\mathrm{d}t$. For the odd character
        $C_\chi = 0$, while for the even character, $C_{\chi_8(5, \cdot)} =
        \displaystyle{\tfrac{2^{3/2} \pi^{1/4}}{3}=1.255195\ldots}$.
    } 
    \label{mod8fun}
\end{figure}

\begin{figure}
    \includegraphics[width=\textwidth]{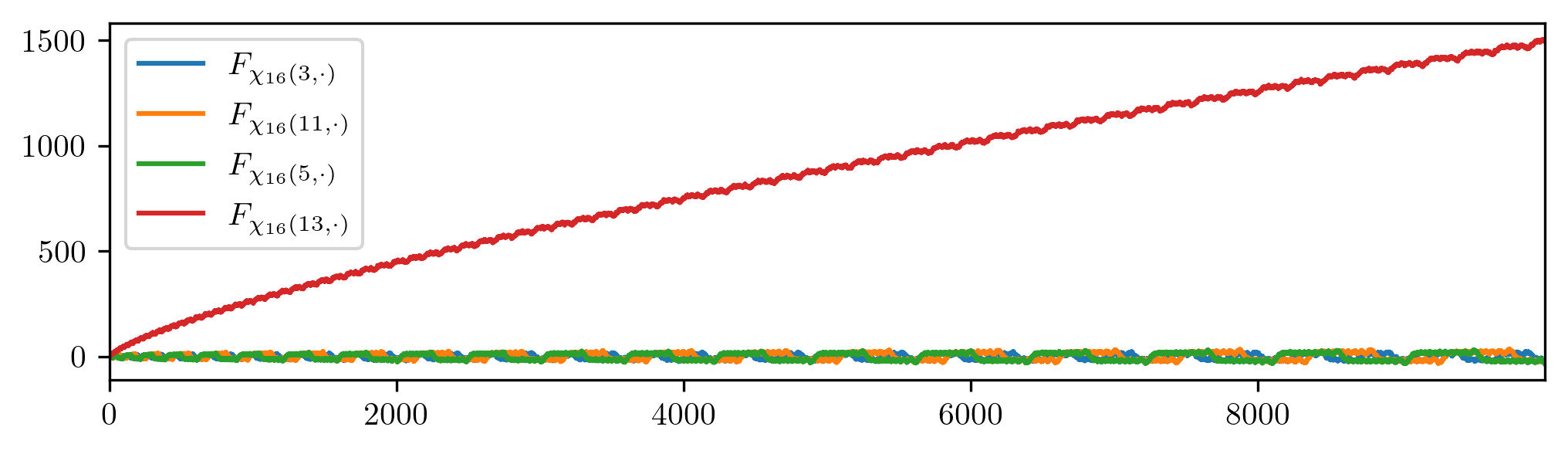}
    \includegraphics[width=\textwidth]{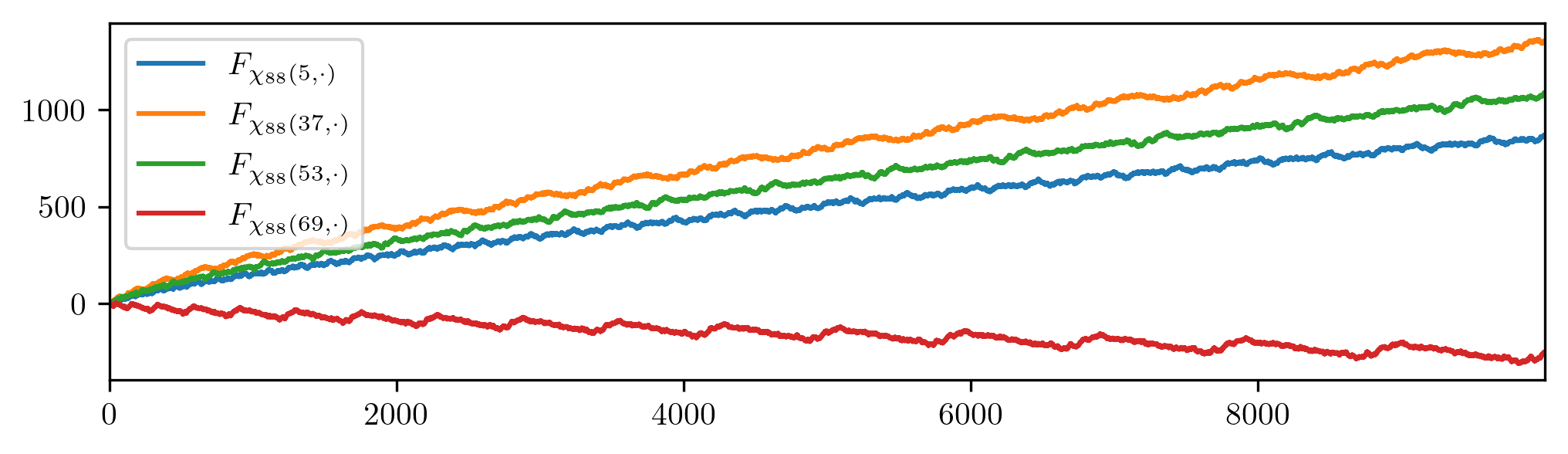}
    \caption{$F_\chi(T) := \int_0^T Z_{\chi}(t)\,\mathrm{d}t$ 
        for the four primitive characters (mod $16$) and the four primitive
        characters (mod $88$) for which $C_\chi \ne 0$.
        Here $\chi_{16}(3, \cdot)$ and $\chi_{16}(11, \cdot)$ are the conjugate
        pair of odd characters ($\chi_{16}(3,5) = -i$);
        characters $\chi_{16}(5, \cdot)$ and $\chi_{16}(13, \cdot)$ are the even characters ($\chi_{16}(5, 5) = i$).
        The characters (mod $88$) can be distinguished by
        $\chi_{88}(5,13) = e(9/10),
        \chi_{88}(37,13) = e(7/10),
        \chi_{88}(53,13) = e(1/10), \text{ and }
    \chi_{88}(69,13) = e(3/10)$.}
    \label{mod16plots}
\end{figure}

\smallskip

Since it is conjectured that $L(\tfrac{1}{2},\chi)\ne 0$ for all primitive characters $\chi$, we may assume that $Z_\chi(0)>0$. We further assume the generalized Riemann hypothesis  (GRH) for $L(s,\chi)$ and that the zeros of $L(s,\chi)$ are all simple. Then the zeros of $Z_\chi(t)$ correspond precisely to the imaginary parts of the zeros of $L(s,\chi)$ on the critical line and $Z_\chi(t)$ changes sign at every zero. 

\smallskip

With these assumptions, we let $0<\gamma_1<\gamma_2<\gamma_3 < \cdots$ denote the sequence of zeros of $Z_\chi(t)$ listed in increasing order, and let $g_n = \gamma_{n+1}-\gamma_n$ denote the size of the $n^{\text{th}}$ gap in this sequence. Then $Z_\chi(t)<0$ when $t \in (\gamma_{2n-1},\gamma_{2n})$ and $Z_\chi(t)>0$ when $t \in (\gamma_{2n},\gamma_{2n+1})$. So the size of the gap $g_n$ between consecutive zeros corresponds to the measure of the set where $Z_\chi(t)<0$ when $n$ is odd and to the measure of the set where $Z_\chi(t)>0$ when $n$ is even. If $C_\chi>0$ (resp.~$C_\chi<0$), then $Z_\chi(t)$ has a positive (resp.~negative) bias and we want to know to what extent that is reflected in the sizes of the gaps $g_n$ where $n$ is even (resp.~odd).

\smallskip

Here is a striking way to see a bias towards even or odd gaps in the zeros of
certain $L$-functions. Define the normalized gaps
$\tilde g_n = \gamma_{n+1} \log(\gamma_{n+1}) - \gamma_{n} \log(\gamma_{n})$,
so that $\tilde g_n$ is (asymptotically) constant on average. Take some large
enough parameter $X$ and consider the two averages
\begin{equation}
    \label{eq:gap-averages}
    A_0(n) = \frac{1}{X}\sum_{0\le k < X} \tilde g_{2n + 2k}\ \ \ \textrm{ and }\ \ \ 
    A_1(n) = \frac{1}{X}\sum_{0\le k < X} \tilde g_{2n + 2k + 1}
\end{equation}
as functions of $n$. Furthermore, to define something that averages to $1$,
define
\[
    \tilde A_0(n) = \frac{2 A_0(n)}{A_0(n) + A_1(n)}\ \ \ \textrm{ and }\ \ \
    \tilde A_1(n) = \frac{2 A_1(n)}{A_0(n) + A_1(n)}.
\]
A priori, we should expect these averages to be very close to the mean value
of the gaps over the whole range, so significant deviation from the mean
is perhaps already a curious feature. In Figure \ref{RunningMeanMod8}, we
see (for $X = 25000$) that the first of these averages is significantly
larger than the second for $n$ up to 1,500,000 (that is, over the first 3
million zeros) for the even primitive character (mod 8); for the odd character,
there is no such bias.

%In Figure \ref{RunningMeanMod8}, we plot the average even and odd gaps ({\color{red} Bober can you fix the wording? We want to include images of the ``running mean" plots}) for the zeros of the $L$-functions associated to the two primitive characters (mod 8) discussed in Example \ref{ex1}. For the even character $\psi_1$, which has $S(\psi_1)\ne 0$, there is a dramatic bias in the average size of the gaps $g_n$ depending on whether $n$ is even or odd. For the odd character $\psi_2$ which has $S(\psi_1)= 0$, there is no apparent bias the average size of the gaps $g_n$.

\smallskip

In Figure \ref{RunningMeanMod16}, we plot the same averages for the zeros
of the $L$-functions associated to the even primitive characters (mod 16). Let
$\chi_1$ (mod 16) be the character discussed in Example \ref{ex2}, and let $\overline{\chi_1}$ be its conjugate. In this case, $S(\chi_1)\ne 0$ and
$S(\overline{\chi_1})= 0$, and we observe that there appears to be a dramatic
bias in the average size of the gaps $g_n$ depending on whether $n$ is even or
odd for the zeros of $L(s,\chi_1)$ but not for the zeros of $L(s,\overline{\chi_1})$.
%{\color{red} M: We want to show four pictures of the ``running mean" of even
%and odd gaps for the characters (mod 8) and the character (mod 16) and its
%conjugate discussed above. I included the (mod 16) images we currently have.
%Did you want re-do these normalizing over each interval instead of over the
%whole range? Personally, I think these ``running mean" plots are by far the
%most interesting picture that we have}.

%\begin{figure}
%        \centering
%        \includegraphics[scale=0.25]{running_mean_16_13_0} 
%         \includegraphics[scale=0.25]{running_mean_16_13_1} 
%        \caption{{\color{red}This is a placeholder. Can we plot the running means for the two characters (mod 8)?}}
%        \label{RunningMeanMod8}
%\end{figure}

\begin{figure}
        \centering
        \includegraphics[width=\textwidth]{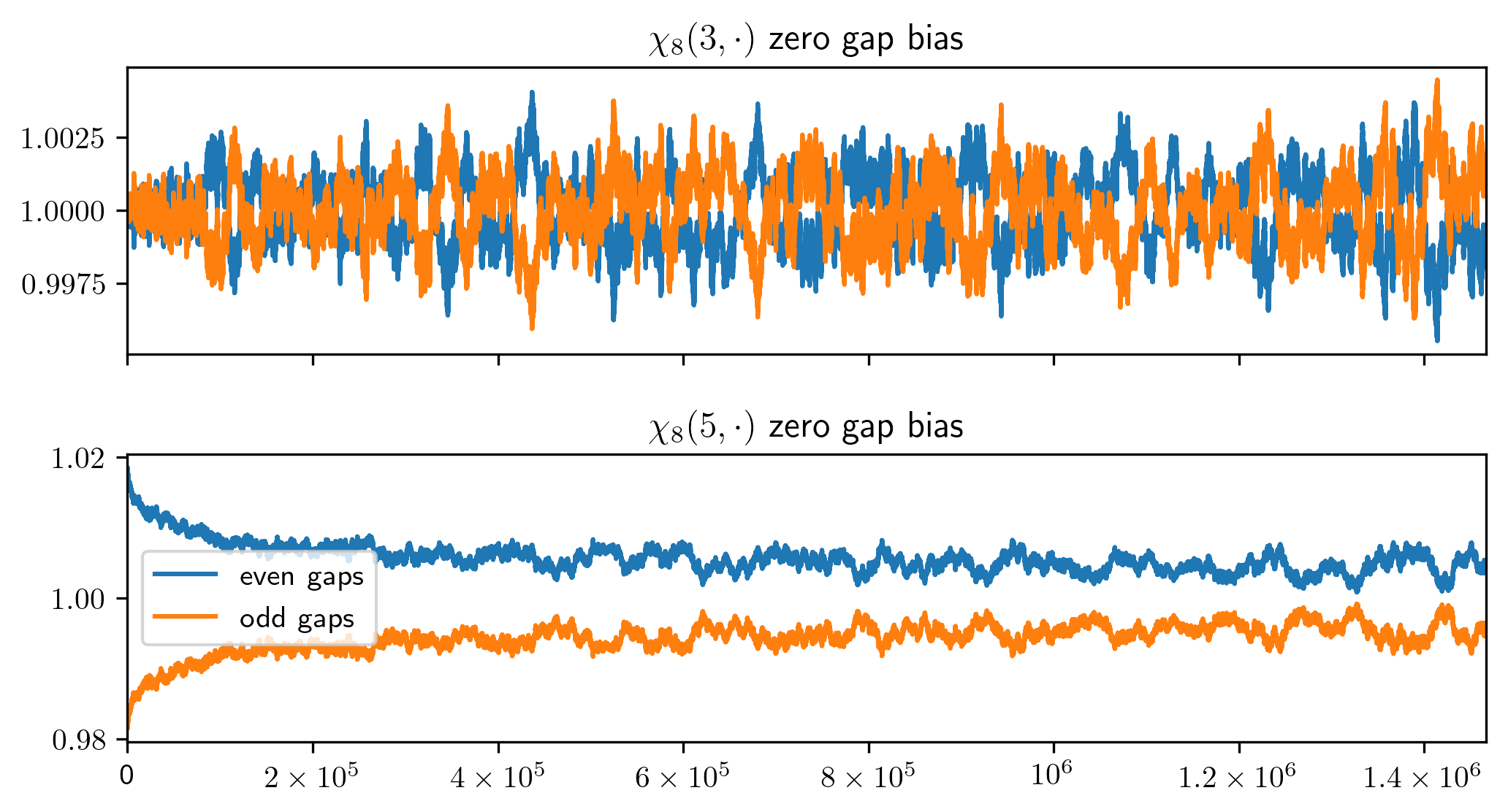} 
        \caption{Running averages of $25000$ consecutive normalized zero gaps for
the odd (top) and even (bottom) primitive characters (mod 8). See the
        discussion around Equation \eqref{eq:gap-averages}.}
        \label{RunningMeanMod8}
\end{figure}

\begin{figure}
        \centering
        \includegraphics[width=\textwidth]{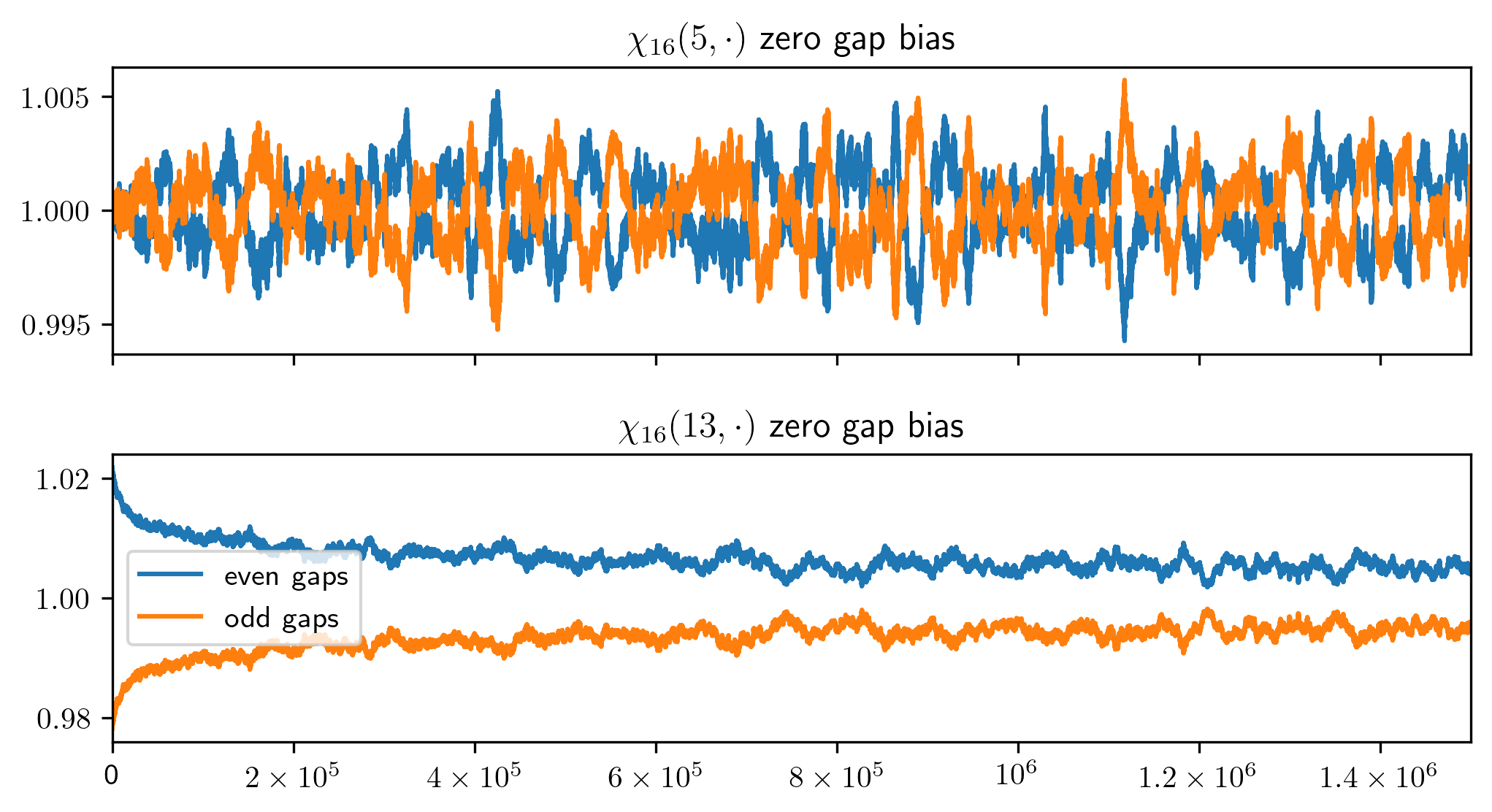}
        \caption{
        Running averages of $25000$ consecutive normalized zero gaps for
        the complex conjugate pair of primitive even characters (mod 16) of
        Example \ref{ex2}. Top: $\chi_{16}(5,3) = -i$. Bottom: $\chi_{16}(13,3) = i$.
        See the discussion around Equation \eqref{eq:gap-averages} for a
        precise description.
        }
        \label{RunningMeanMod16}
\end{figure}

\smallskip

%{\color{red} I might like to come back to a brief discussion the the choice of parameters for these figures in Section XXX yet to be written. e.g if we take $X$ as an increasing function of $n$, and then take a limit, do we actually get two different answers?}

The choice of parameters for Figures \ref{RunningMeanMod8} and
\ref{RunningMeanMod16} is somewhat arbitrary; however, studying bias in gaps is
the same as studying a bias in measure in the following sense. Following Gonek
and Ivi\'c \cite{gi}, who studied the distribution of positive and negative
values of the Hardy $Z$-function $Z(t)$, we define
\begin{equation}\label{eq:Mplusminus}
M^{\pm}_\chi(T) = \mathrm{meas}\Big\{ t\in [0,T]: \pm Z_\chi(t) >0 \Big\}.
\end{equation}
Then, we have
\[
\begin{split}
M_\chi^+(T) - M_\chi^-(T) &= \sum_{\substack{ n \ge 1 \\ 0<\gamma_{2n+1} \le T}} \big( g_{2n}-g_{2n-1}  \big) + O(1).
\end{split}
\] 
In Figure \ref{MeasureMod8}, we plot $M_{\chi_8}^+(T) - M_{\chi_8}^-(T)$ and $M_{\chi_{-8}}^+(T) - M_{\chi_{-8}}^-(T)$ for the two primitive characters (mod 8) discussed in Example \ref{ex1}. For $\chi_8$, which has $S(\chi_8)\ne 0$, there is a clear bias in measure for positive values of $Z_{\chi_8}(t)$ which is not present for the character $\chi_{-8}$, where $S(\chi_{-8})=0$.
In Figure \ref{MeasureMod16}, we plot $M_{\chi_1}^+(T) - M_{\chi_1}^-(T)$ and $M_{\overline{\chi_1}}^+(T) - M_{\overline{\chi_1}}^-(T)$ for the even character $\chi_1$ (mod 16) discussed in Example \ref{ex2}. For $\chi_1$, which has $S(\chi_1)\ne 0$, there is a clear bias in measure for positive values of $Z_{\chi_1}(t)$ which is not present for its conjugate character $\overline{\chi_1}$, where $S(\overline{\chi_1})=0$.

\smallskip

%In Section \ref{section:distribution}, we show more examples illustrating the connection between the nonvanishing of sum $S(\chi)$, the bias in gaps between zeros of $L(s,\chi)$, and the bias in the measure of the intervals for which $Z_\chi(t)$ is positive or negative indicated by plotting $M_\chi^+(T) - M_\chi^-(T)$. 

%\smallskip

%Letting $M^{\pm}(T) = \mathrm{meas}\big\{ t\in [0,T]: \pm Z(t) >0 \big\}$, Gonek and Ivi\'c \cite{gi} have conjectured that $M^{\pm}(T) \sim \frac{T}{2}$ as $T\to \infty$ and they proved some interesting partial results toward this conjecture. Likewise, we conjecture that $M^{\pm}_\chi(T) \sim \frac{T}{2}$ as $T\to \infty$ for every primitive character $\chi$, but when $S(\chi) \ne 0$, based on our numerical data for the distribution of $M_\chi^+(T) - M_\chi^-(T)$, we expect that there is a lower-order term in this asymptotic formula (with the same sign as $C_\chi$). This is discussed more in Section \ref{section:distribution}.

Letting $M^{\pm}(T) = \mathrm{meas}\big\{ t\in [0,T]: \pm Z(t) >0 \big\}$, Gonek and Ivi\'c \cite{gi} have conjectured that $M^{\pm}(T) \sim \frac{T}{2}$ as $T\to \infty$ and they proved some interesting partial results toward this conjecture. Likewise, we conjecture that $M^{\pm}_\chi(T) \sim \frac{T}{2}$ as $T\to \infty$ for every primitive character $\chi$. However, when $S(\chi) \ne 0$, our numerical data suggest that $M_\chi^+(T) - M_\chi^-(T)$ tends to $+\infty$ if $C_\chi>0$ and to $-\infty$ if $C_\chi<0$. This is discussed more in Section \ref{section:distribution}.

%When $S(\chi) \ne 0$, as $T\to \infty$, perhaps an asymptotic formula such as
%\[
%M^{\pm}_\chi(T) = \frac{T}{2} + \mathrm{sgn}( C_\chi ) \, T^{\theta_\chi} \, P_\chi(\log T) + o(T^{\theta_\chi} )
%\]
%holds, where $0< \theta_\chi <1$ and $P_\chi(\cdot)$ is a polynomial with positive leading coefficient.  

\smallskip

%(old) Experimentally, we have found another thin family of Dirichlet characters $\chi$ for which $S(\chi)=0$ but there still seems to be a bias in measure in the sense that $M_\chi^+(T) - M_\chi^-(T)$ appears to have an asymptotic growth rate as of at least  $T^{\theta_\chi}$ for some $0< \theta_\chi <1$. We have an alternate theoretical explanation for this family in terms of the different Gauss-type sum that arises in the main-term in asymptotic formula for the discrete mean-values
%\[
%\sum_{0<\gamma_\chi\le T} Z_\chi'(\gamma_\chi) \quad \text{and} \quad \sum_{0<\lambda_\chi\le T} Z_\chi(\lambda_\chi),
%\]
%where $\lambda_\chi$ are the roots of $Z_\chi'(t)$, which we estimate assuming GRH for $L(s,\chi)$. We will present these results in a companion paper. %{\color{red} M: give an example, or leave this mysterious?}

Experimentally, we have found another thin family of Dirichlet characters $\chi$ with \emph{odd} modulus for which $S(\chi)=0$ but there still seems to be a bias in gaps and a bias in measure as described above.
It seems these biases can be explained via discrete moments. For instance, assuming GRH for Dirichlet $L$-functions, we have proved asymptotic formulae for the discrete moments
\[
\sum_{0<\gamma_\chi\le T} Z_\chi'(\gamma_\chi)
\qquad\text{and}\qquad
\sum_{0<\lambda_\chi\le T} Z_\chi(\lambda_\chi),
\]
where $\gamma_\chi$ and $\lambda_\chi$ run over the zeros of
$Z_\chi(t)$ and $Z_\chi'(t)$, respectively.
A new Gauss-type sum arises in the main terms of these asymptotic formulae; it vanishes for primitive characters of even modulus and can be nonzero for even characters of odd modulus.

\smallskip

Perhaps surprisingly, the Riemann zeta-function is in this family. For the usual Hardy $Z$-function, $Z(t)$, assuming RH, for every $\varepsilon>0$, our results give
\begin{align*}
\sum_{0<\gamma\le T} Z'(\gamma)
&=-\frac{2^{3/2}\log 2\,\sin(\pi/8)}{3}
  \left(\frac{T}{2\pi}\right)^{3/4}
  \left(\log\frac{T}{2\pi}
        +2\gamma_0-5\log 2-\frac43\right) +O_\varepsilon\!\left(T^{1/2+\varepsilon}\right),
\end{align*}
where $\gamma_0$ is Euler's constant, and
\[
\sum_{0<\lambda\le T} Z(\lambda)
=
-\frac{2^{5/2}(e-2)\log 2\,\cos(\pi/8)}{3}
 \left(\frac{T}{2\pi}\right)^{3/4}
+O\!\left(\frac{T^{3/4}}{\log T}\right),
\]
where $\gamma$ and $\lambda$ run over the zeros of
$Z(t)$ and $Z'(t)$, respectively.
Both discrete averages have negative leading
coefficients and this is reflected in the experimental bias in gaps and bias in measure for the Hardy $Z$-function (see Figures \ref{Z_gaps} and \ref{Z_meas}). For the second sum, we obtain an asymptotic expansion to arbitrarily many orders in inverse powers of $\log T$. We will present results for all primitive characters, together with their proofs, in a companion paper.

\smallskip

Random matrix models have played an important role in the study of $L$-functions for many years, so a natural question is how to construct a random model for these $Z$-functions that simultaneously accounts for their continuous and discrete moments. Such a model should reflect the arithmetic dependence of the main terms, including the different Gauss-type sums that arise in these asymptotic formulae. We plan to investigate this  in our forthcoming paper on discrete moments.

%{\color{red} TO CHANGE: Experimentally, we have found another thin family of Dirichlet characters $\chi$ with {\it odd} modulus (which includes $Z(t)$) for which $S(\chi)=0$ but there still seems to be a bias in measure for $Z_\chi(t)$ in the sense that $M_\chi^+(T) - M_\chi^-(T)$ appears to have an asymptotic growth rate of at least  $T^{\theta_\chi}$ for some $0< \theta_\chi <1$.  We have an alternate theoretical explanation for the bias in measure of this family in terms of the nonvanishing of different Gauss-type sum that arises in the main-term of the asymptotic formula for the discrete mean-values
%\[
%\sum_{0<\gamma_\chi\le T} Z_\chi'(\gamma_\chi) \quad \text{and} \quad \sum_{0<\lambda_\chi\le T} Z_\chi(\lambda_\chi),
%\]
%where $\lambda_\chi$ are the roots of $Z_\chi'(t)$, which we have estimated assuming GRH for $L(s,\chi)$. We will present these results in a companion paper.}

\begin{figure}
        \centering
        \includegraphics[width=\textwidth]{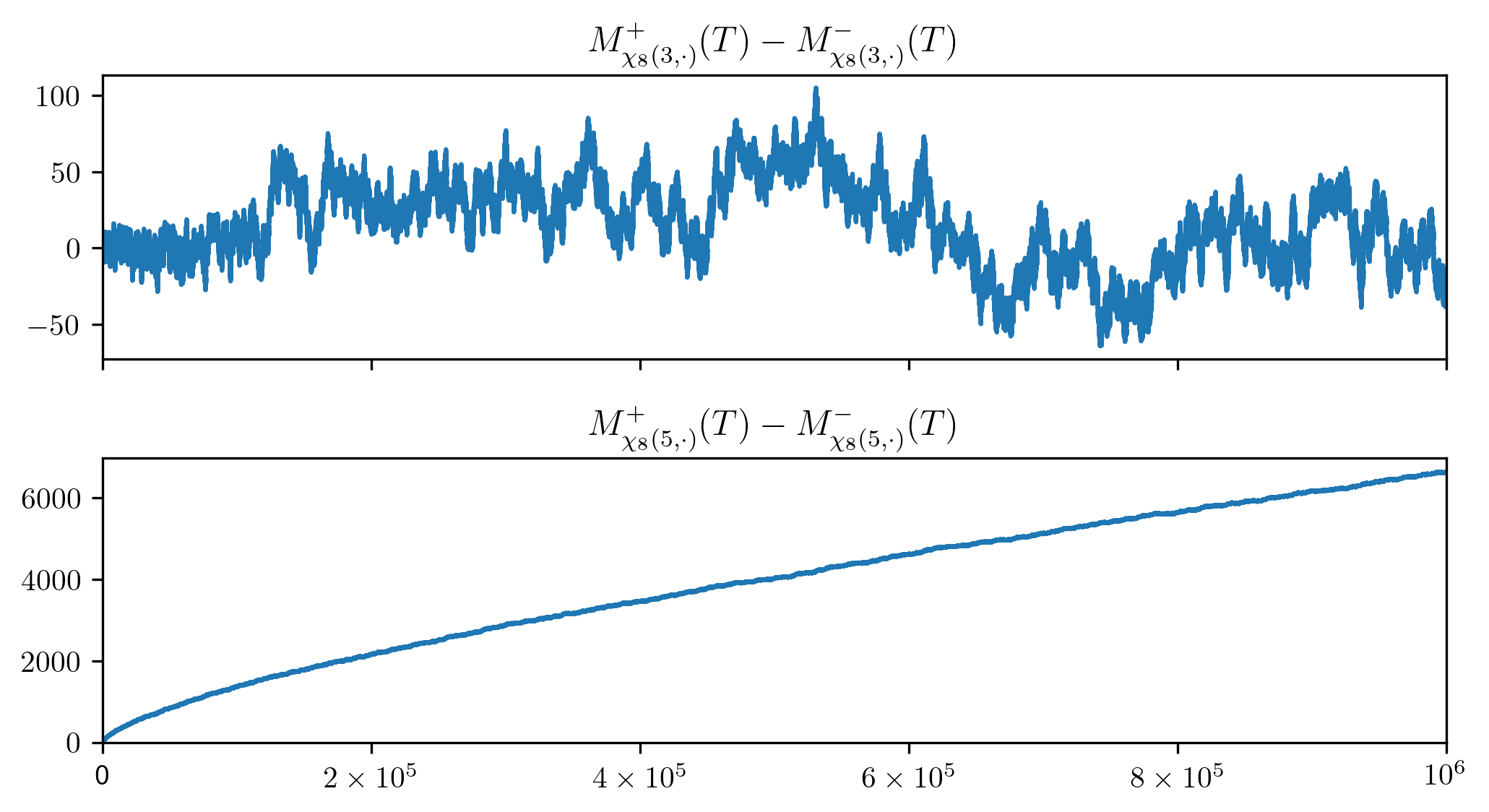}
        %\caption{{\color{red} This is a placeholder.} We want one plot containing both $M_{\psi_1}^+(T) - M_{\psi_1}^-(T)$ and $M_{\psi_2}^+(T) - M_{\psi_2}^-(T)$ for the two primitive characters (mod 8) discussed in Example \ref{ex1}.}
        \caption{The difference between the amount of time that $Z_\chi(t)$ spends
            positive and negative for the odd (top) and even (bottom) primitive
        characters (mod 8).}
        \label{MeasureMod8}
\end{figure}

\begin{figure}
        \centering
        \includegraphics[width=\textwidth]{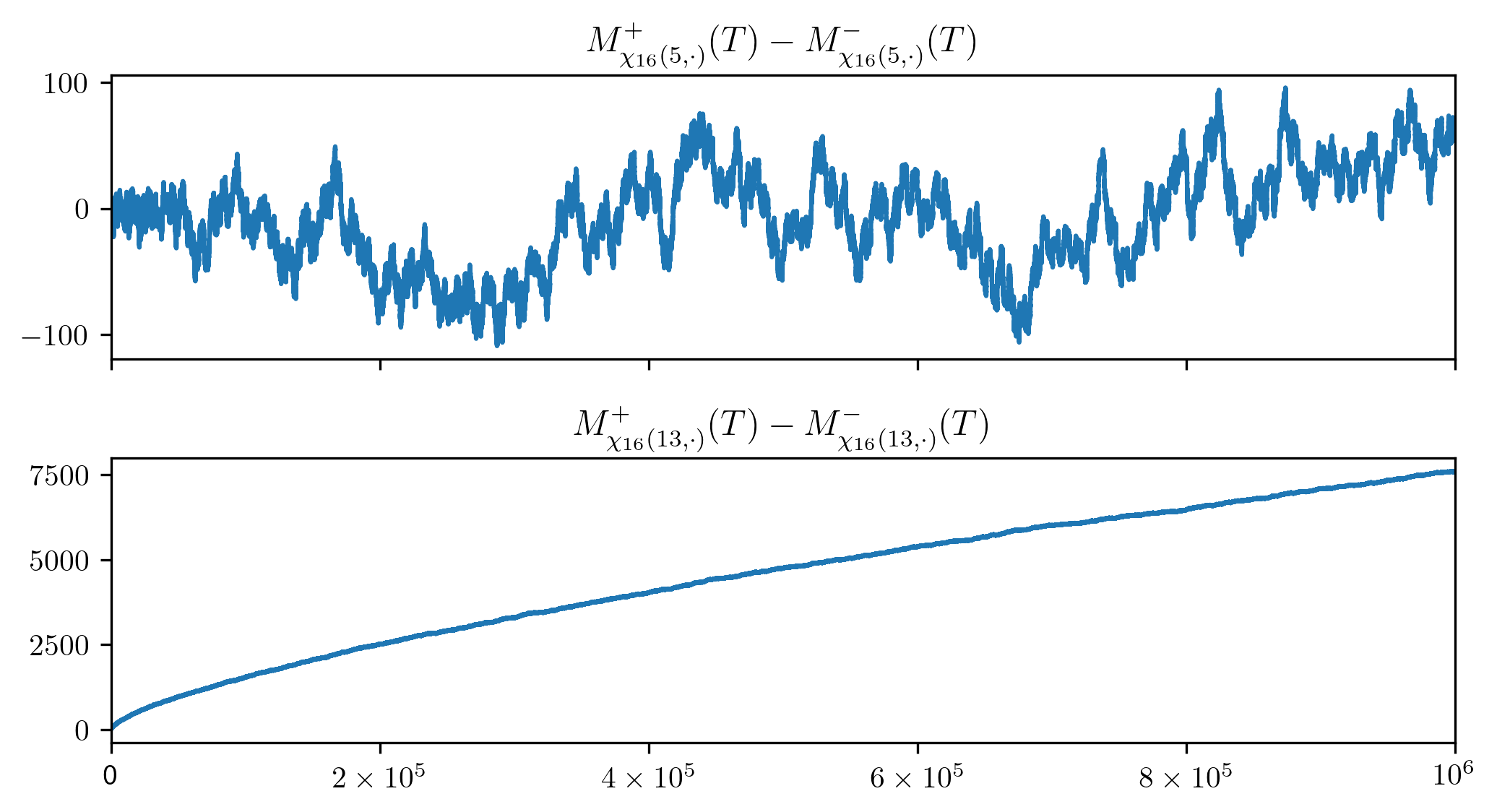}
        %\caption{This is a plot of $M_{\chi_1}^+(T) - M_{\chi_1}^-(T)$ (in orange) and $M_{\overline{\chi_1}}^+(T) - M_{\overline{\chi_1}}^-(T)$ (in blue) for the even character $\chi_1$ (mod 16) discussed in Example \ref{ex2}.}
        \caption{The difference between the amount of time that $Z_\chi(t)$
            spends positive and negative, for the even character (mod 16) and its
            complex conjugate, discussed in Example \ref{ex2}. Top: $\chi_{16}(5,3)
        = -i$. Bottom: $\chi_{16}(13,3) = i$.}
        \label{MeasureMod16}
\end{figure}

\begin{figure}
        \centering
        \includegraphics[width=\textwidth]{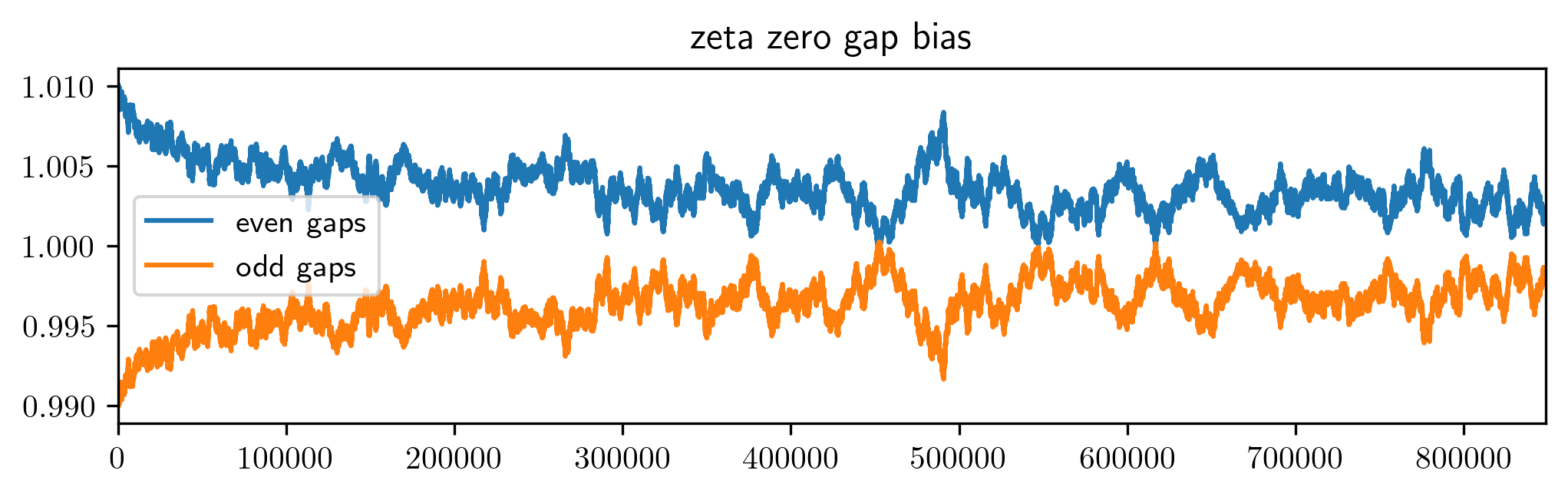} 
        \caption{Running averages of $25000$ consecutive normalized gaps of ordinates of zeros of $\zeta(s)$, the Riemann zeta-function. See the discussion around Equation \eqref{eq:gap-averages}. The blue curve  corresponds to the intervals where $Z(t)<0$.}
        \label{Z_gaps}
\end{figure}

\begin{figure}
    \centering
    \includegraphics[width=\textwidth]{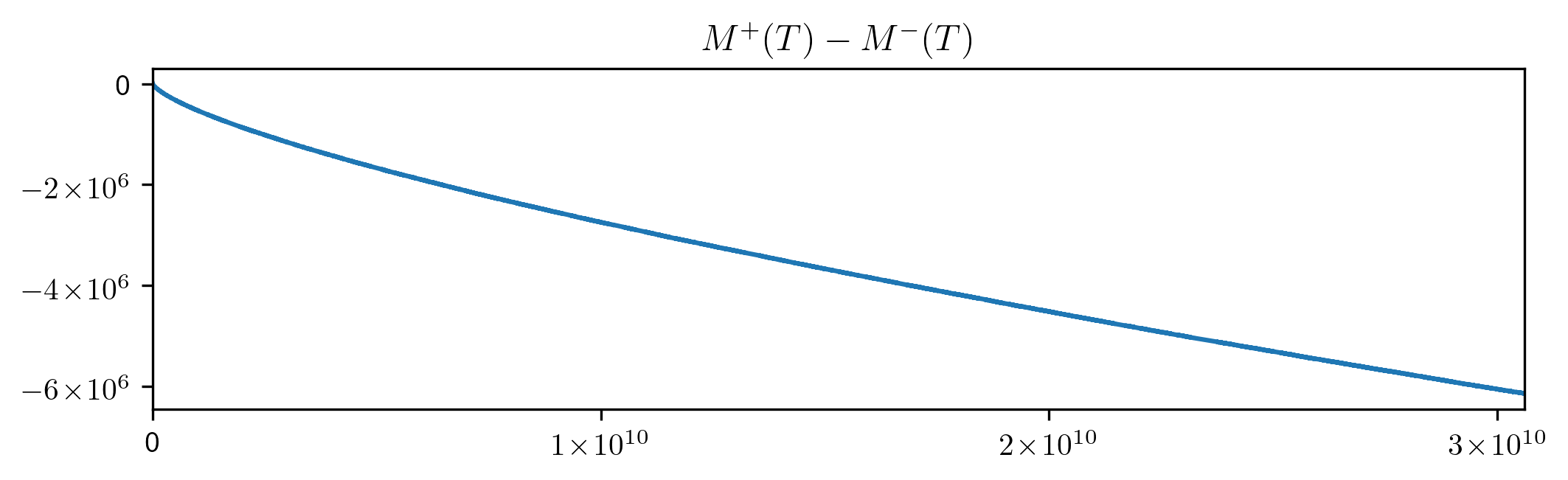}
    \caption{Plot of $M^+(T) \!-\! M^-(T)$,
        where $M^{\pm}(T)=\textrm{meas}\{ t \in [0,T] : \pm Z(t) > 0\}$
        and $Z(t)$ is the Hardy $Z$-function. This plot was made using approximately
    100 billion zeros of $\zeta(s)$, computed by D.~Platt \cite{platt}.}
    \label{Z_meas}
\end{figure}

\subsection{Analysis of \texorpdfstring{$S(\chi)$}{S(chi)}}
In Proposition \ref{prop:S1} of this subsection, we give a complete classification of the primitive $\chi$ (mod $q$) for which
\[
S(\chi)=\sum_{n=1}^{2q}\chi(n)\,e\!\left(\frac{-n^2}{2q}\right) \ne 0.
\]
Note the following simple observations. 
Replacing $n$ by $-n$ in the sum, we see that $S(\chi) = \chi(-1)\, S(\chi)$. So $S(\chi)=0$ when $\chi$ is odd. Also,
\[
\begin{split}
S(\chi) &= \left\{ \sum_{n=1}^{q}+\sum_{n=q+1}^{2q} \right\} \chi(n)\,e\!\left(\frac{-n^2}{2q}\right) = \left\{ 1+ e\!\left(-\frac{q}{2}\right)  \right\} \sum_{n=1}^q \chi(n)\,e\!\left(\frac{-n^2}{2q}\right).
\end{split}
\]
So $S(\chi)=0$ when $q$ is odd. These two observations show that $S(\chi)=0$ for at least 75\% of primitive characters. 
In general, we let
\begin{equation}\label{eq:counting_function}
N(q) := \# \big\{ \chi \, (\text{mod }q): \chi \text{ primitive and } S(\chi) \neq 0 \big\} 
\end{equation}
denote the number of primitive characters mod $q$ for which $S(\chi)$ is nonzero. A consequence of Proposition \ref{prop:S1} is the following exact formula for $N(q)$ in terms of the prime factorization of $q$.

\begin{theorem}\label{thm:counting_q}
Let $q \in \mathbb{N}$. If $8$ does not divide $q$, then $N(q)=0$. If $8|q$, write
\[\displaystyle{q=2^{m} \Big(\prod_i p_i \Big) \Big(\prod_j r_j^{a_j}\Big)},
\]where $p_i, r_j$ are distinct odd primes, $m\ge 3$, and $a_j\geq 2$. Then, we have
\begin{equation}\label{eq:counting_q}
N(q) = \big\lceil 2^{m-5} \big\rceil \, \prod_i \left( \frac{p_i-3}{2}\right) \prod_j \left(\frac{1}{4}r_j^{a_j-2}\left( r_j-1\right)^2\right).
\end{equation}
\end{theorem}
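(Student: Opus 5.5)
The plan is to prove the multiplicativity of the nonvanishing condition first and then count locally; this is, in effect, how Proposition \ref{prop:S1} should be obtained. Write $q=2^m q'$ with $q'$ odd, and factor $\chi=\chi_{2}\prod_{p^k\exactdivide q'}\chi_{p^k}$ into primitive characters of prime-power conductor. By the Chinese remainder theorem applied to the modulus $2q=2^{m+1}q'$, write $n=2^{m+1}x+q'y$ and expand $e(-n^2/2q)$. This should give
\[
S(\chi)=(\text{unimodular constant})\cdot S_2\cdot\prod_{p^k\exactdivide q'}S_{p^k},
\]
where
\[
S_2=\sum_{y \bmod 2^{m+1}}\chi_2(y)\,e\!\left(\frac{-b_2y^2}{2^{m+1}}\right),\qquad S_{p^k}=\sum_{x\bmod p^k}\chi_{p^k}(x)\,e\!\left(\frac{-b_px^2}{p^k}\right),
\]
with units $b_2,b_p$ determined by the cofactors. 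Hence $S(\chi)\ne0$ if and only if every local factor is nonzero. Since primitive characters mod $q$ correspond bijectively to tuples of local primitive characters, $N(q)$ factors as a product of local counts. Two facts then need checking: the $2$-part count vanishes unless $m\ge3$, and the local counts equal $\lceil 2^{m-5}\rceil$, $(p-3)/2$ and $\tfrac14 r^{a-2}(r-1)^2$ respectively.

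\emph{Odd prime $p$, $k=1$.} Substitute $u=x^2$. The fibre sum $\sum_{x^2=u}\chi(x)$ vanishes unless $\chi_p(-1)=1$, so $\chi_p$ must be even. In that case choose $\psi$ with $\psi^2=\chi_p$. Then $S_p$ is a combination $\overline{\psi}(-b_p)\tau(\psi)+\overline{\psi\epsilon_p}(-b_p)\tau(\psi\epsilon_p)$, where $\epsilon_p$ is the Legendre symbol. This vanishes if and only if the ratio $\tau(\psi\epsilon_p)/\tau(\psi)$ equals a specific root of unity. Using $\tau(\psi)\tau(\epsilon_p)=J(\psi,\epsilon_p)\tau(\psi\epsilon_p)$, the condition becomes $J(\psi,\epsilon_p)=\zeta\,\tau(\epsilon_p)$ for some root of unity $\zeta$. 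I expect to rule this out (for $\chi_p$ nontrivial even) by a congruence argument: $J(\psi,\epsilon_p)\equiv$ an explicit value modulo the prime above $p$ in $\mathbb{Z}[\zeta_{p-1}]$, whereas $\tau(\epsilon_p)$ is divisible by the prime above $p$ to exponent $1/2$. There are $(p-1)/2$ even characters mod $p$; removing the trivial one leaves $(p-3)/2$.

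\emph{Odd prime power $r^a$, $a\ge2$.} Here I would use the standard $p$-adic stationary phase for characters. Write $\chi(1+r^{\lceil a/2\rceil}t)=e(ct/r^{\lfloor a/2\rfloor})$ for some unit $c$ (a ``logarithm'' of $\chi$). The sum $S_{r^a}$ then localises to the critical points $x$ solving $2b_rx^2\equiv c\cdot(\text{unit})\pmod r$. So $S_{r^a}\ne0$ if and only if $\chi$ is even and a certain unit attached to $\chi$ is a quadratic residue mod $r$. The two solutions $\pm x_0$ contribute equally precisely when $\chi$ is even, which takes care of the sign cancellation. The number of primitive characters mod $r^a$ is $r^{a-2}(r-1)^2$. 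The parity condition halves this, and the quadratic-residue condition on $c$ (which ranges uniformly over units as $\chi$ varies) halves it again, giving $\tfrac14 r^{a-2}(r-1)^2$.

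\emph{The prime $2$.} This is the step I expect to be the main obstacle. The sum runs mod $2^{m+1}$ while $\chi_2$ has conductor $2^m$, and completing the square mod powers of $2$ is delicate. For $m\le 2$ I would verify directly that $S_2=0$: the conductor is $1$ or $4$, and the latter is odd. For $m\ge3$, write $\chi_2(5^j)=e(cj/2^{m-2})$ with $c$ odd and $\chi_2(-1)=1$. Then apply the same stationary-phase analysis using the $2$-adic logarithm, keeping track of the extra factor of $2$. The expected outcome is that nonvanishing pins $c$ to a single residue class modulo $8$ (relative to $b_2$), or forces $c$ to be a specific value when $m\le4$. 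This yields $\lceil 2^{m-5}\rceil$ admissible characters out of the $2^{m-3}$ even primitive ones. The small cases $m=3,4,5$ would be checked by hand, and they agree with Examples \ref{ex1} and \ref{ex2}.

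Multiplying the local counts then gives \eqref{eq:counting_q}. One point must be confirmed: the units $b_p$ depend on the cofactor, but for each fixed cofactor the local admissibility condition selects the same number of characters. Therefore the product formula holds regardless of the coupling through the $b_p$.
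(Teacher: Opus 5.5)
Your approach is sound and organized differently from the paper's. The paper never factors $S(\chi)$ in $\chi$ directly. It first proves (Lemma \ref{square}) that $S(\chi)=\sum_{\psi^2=\chi}\overline{\psi}(-1)\tau(\psi)$, so $S(\chi)=S_2(\psi,-1)$ for a primitive $\psi$ \textup{(mod $2q$)}. It then writes each local $S_2(\psi_p,r)$ as a sum of twisted Gauss sums $\tau(\xi\psi_p)$ over quadratic $\xi$, and evaluates these with Lemmas \ref{lem:gauss-sum}--\ref{lem:gauss-sum-evenprime}; the $2$-power cases with $a\le 6$ are done by computer. Finally it counts admissible $\psi$ multiplicatively as $N_2(2q)$ and divides by the number of square roots ($2$ per odd prime, $4$ at $2$), after discarding quadratic $\psi_p$ for $p\exactdivide q$.

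You instead factor $S(\chi)$ itself into local quadratic sums in $\chi_p$ and count admissible $\chi_p$ directly. This removes the square-root bookkeeping and makes the independence of the local conditions (for fixed $q$) transparent. The price is that you need stationary phase for a quadratic phase instead of the linear-phase evaluation in Lemma \ref{lem:gauss-sum}, and you do not get the closed forms $\Seven$, $\Sodd$. Your local conditions coincide with Proposition \ref{prop:S1}(i)--(iii). Your treatment of $p\exactdivide q$ is essentially the paper's ``direct proof''; the valuation version even tolerates an arbitrary root of unity $\zeta$.

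The one step that is still a forecast rather than an argument is the prime $2$, and that is exactly where $\lceil 2^{m-5}\rceil$ comes from. Additive stationary phase in $y$ is awkward there, because $b_2u^2\equiv c'$ has four roots modulo powers of $2$. You would have to show that the branches $\pm u_0$ and $\pm u_0(1+2^{j-1})$ reinforce rather than cancel. A clean way to finish uses the fact that odd $y$ \textup{(mod $2^{m+1}$)} are $\pm 5^j$, so
\[
S_2=4\sum_{j \bmod 2^{m-2}} e\!\left(\frac{cj}{2^{m-2}}-\frac{b_2\,25^j}{2^{m+1}}\right),
\]
and $j\mapsto 25^j=1+8v$ is a bijection onto the classes $\equiv1\pmod 8$. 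Let $\lambda=\log(25)/8$, with $\log$ the $2$-adic logarithm; then $\lambda\equiv-1\pmod 8$. Up to a constant, the phase becomes $\big((c\lambda^{-1}-b_2)v-4c\lambda^{-1}v^2+2^6(\cdots)\big)/2^{m-2}$, so there is a single critical point. The steps are:
\begin{itemize}
\item For $m\ge4$, the shift $v\mapsto v+2^{m-4}$ kills the sum unless $4\mid c\lambda^{-1}-b_2$.
\item For $m=5$, the reduced sum has two terms and forces $c\lambda^{-1}-b_2\equiv4\pmod 8$.
\item For $m\ge6$, a second shift by $2^{m-5}$ forces $8\mid c\lambda^{-1}-b_2$, after which what remains is a nondegenerate quadratic Gauss sum.
\end{itemize}
This yields $c\equiv3b_2\pmod{2^{m-2}}$ for $3\le m\le5$ and $c\equiv-b_2\pmod 8$ for $m\ge6$. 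That is exactly Proposition \ref{prop:S1}(iii), which you could also simply cite. It gives your count of $\lceil 2^{m-5}\rceil$.

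Two smaller points at odd prime powers $r^a$:
\begin{itemize}
\item When $a$ is odd, the $z$-sum leaves a residual sum modulo $r$ that you must check is a nonvanishing quadratic Gauss sum; for $r=3$ the logarithm contributes an extra cubic term. The same discrete-log parametrization avoids this.
\item Your ``halve twice'' count needs the observation that parity depends only on the tame part of $\chi$, while $c\bmod r$ depends only on the wild part, so the two conditions are independent.
\end{itemize}
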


Using the Selberg--Delange method, we can compute the average of $N(q)$, as $q$ varies. 

\begin{corollary}\label{sd} As $Q\to \infty$, we have
\begin{equation}\label{eq:counting_Q}
\sum_{q\leq Q}N(q) = \frac{\mathscr{A} Q^2}{\sqrt{\log Q}} + O\!\left( \frac{Q^{2}}{(\log Q)^{3/2}}\right),
\end{equation}
where $\mathscr{A}>0$ is a constant given explicitly in \eqref{Avalue}. 
\end{corollary}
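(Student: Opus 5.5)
The plan is to derive the asymptotic from the exact formula of Theorem \ref{thm:counting_q} by the Selberg--Delange method applied to the Dirichlet series $F(s)=\sum_{q\ge1}N(q)q^{-s}$. Since $N(q)=0$ unless $8\mid q$, and $N$ is multiplicative in the odd part of $q$ times a factor depending only on $2^m$, we write
\[
F(s)=\Big(\sum_{m\ge3}\lceil 2^{m-5}\rceil 2^{-ms}\Big)\prod_{p>2}\Big(1+\frac{p-3}{2}p^{-s}+\sum_{a\ge2}\frac{p^{a-2}(p-1)^2}{4}p^{-as}\Big).
\]
The natural variable is $s=w+1$, since $N(q)$ is of size $q$ times a multiplicative weight. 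The local factor at an odd prime is then $1+\tfrac12 p^{-w}+O(p^{-1-\mathrm{Re}\,w})$. This suggests comparing with $\zeta(w)^{1/2}$. We therefore write $F(w+1)=\zeta(w)^{1/2}G(w)$, where $G(w)$ is an Euler product. A direct check on each local factor shows that $G$ converges absolutely and is holomorphic and nonvanishing in $\mathrm{Re}\,w>1/2$. The $2$-factor is a geometric-type series that converges for $\mathrm{Re}\,w>0$, since $\lceil 2^{m-5}\rceil 2^{-m(w+1)}\approx 2^{-5}2^{-mw}$.

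Next we apply the Selberg--Delange theorem, in the form of Tenenbaum, Chapter II.5, with exponent $z=1/2$. It is applied to the coefficients $b(q)=N(q)/q$, whose generating series $\sum_q b(q)q^{-w}$ equals $F(w+1)$. This gives
\[
\sum_{q\le x}\frac{N(q)}{q}=\frac{G(1)}{\Gamma(1/2)}\,\frac{x}{(\log x)^{1/2}}\Big(1+O\big((\log x)^{-1}\big)\Big).
\]
To use the theorem we must verify its hypotheses. The relevant bound is $|N(q)/q|\le d_{1/2}$-type growth, or more precisely domination by a positive multiplicative function whose series is of the form $\zeta^{\kappa}$ times a bounded factor. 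This holds because $N(q)\le \varphi(q)$. We also need $G$ to extend holomorphically to a region beyond $\mathrm{Re}\,w=1$ with polynomial growth, which follows from the absolute convergence on $\mathrm{Re}\,w>1/2$.

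Partial summation with weight $q$ then yields
\[
\sum_{q\le Q}N(q)=\frac{G(1)}{2\sqrt{\pi}}\frac{Q^2}{\sqrt{\log Q}}+O\Big(\frac{Q^2}{(\log Q)^{3/2}}\Big),
\]
so $\mathscr A=G(1)/(2\sqrt\pi)$. We can write this out explicitly as the $2$-adic factor times $\prod_{p>2}(1-p^{-1})^{1/2}\big(1+\tfrac{p-3}{2p}+\sum_{a\ge2}\tfrac{(p-1)^2}{4p^2}\big)$. Since $N(q)$ counts characters, the $1$ in the local factor at $p$ accounts for $p\nmid q$. Positivity of $\mathscr A$ requires every local factor to be nonzero and positive. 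For $p=3$ the linear term is $0$, so that factor is still positive; all others are positive.

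The main obstacle is the careful bookkeeping in the Euler factor comparison. We must confirm that $G$ is analytic and bounded near $\mathrm{Re}\,w=1$, including at small primes, where $(p-3)/2$ vanishes or the ratio to $(1-p^{-w})^{-1/2}$ is delicate. We must also confirm that the Selberg--Delange error term gives $O(Q^2(\log Q)^{-3/2})$ after the partial summation, which is routine.
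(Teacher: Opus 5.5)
Your argument follows the paper's proof. You write $\sum_q N(q)q^{-1-w}=\zeta(w)^{1/2}G(w)$ with $G$ holomorphic and bounded for $\mathrm{Re}\,w\ge\tfrac12+\varepsilon$, apply Tenenbaum's Theorem II.5.2 with $z=\tfrac12$ to the coefficients $N(q)/q$, and recover $\sum_{q\le Q}N(q)$ by partial summation, obtaining $\mathscr A=G(1)/(2\sqrt\pi)$. The only difference is that the paper further factors $G$ as $f\,g\,h^{1/2}\zeta(2w)^{-1/2}$ to display the constant.

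One slip needs fixing. Your explicit Euler factor for $\mathscr A$ is evaluated at $w=0$ instead of $w=1$; as written, the sum $\sum_{a\ge2}\frac{(p-1)^2}{4p^2}$ diverges, and so does the product. At $w=1$ the odd-prime factor of $G(1)$ is $(1-p^{-1})^{1/2}\bigl(1+\frac{p-3}{2p^2}+\frac{p-1}{4p^3}\bigr)=1+O(p^{-2})$, and the $2$-part is $2^{-1/2}\cdot\frac{11}{512}$. With these factors, your positivity argument goes through, since each local factor of $F$ at this real point is at least $1$.

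A minor point: the $a\ge2$ terms make the remainder in the local factor $O(p^{-2\sigma})$, not $O(p^{-1-\sigma})$. Those terms are why the region is $\mathrm{Re}\,w>\tfrac12$, which is the region you correctly state.
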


Since there are $\asymp Q^2$ primitive characters with modulus not exceeding $Q$, Corollary \ref{sd} implies that $S(\chi)=0$ for $100\%$ of primitive characters. Thus, the constant in Theorem \ref{thm:GL1} satisfies $C_\chi=0$ for $100\%$ of primitive characters and therefore we expect the analogue of Ivi\'c's conjecture in \eqref{ivic} to hold for $100\%$ of primitive characters.

\smallskip

We now state our classification of the primitive $\chi$ for which $S(\chi) \ne 0$. As seen in Corollary \ref{sd}, these characters form a set of density zero among all primitive characters. 

\begin{proposition}[Main Proposition]\label{prop:S1}
    Let $\chi$ be a primitive character modulo $q$, where
    $q = \prod_{p} p^{e_p}$ is the factorization of $q$ into prime powers and
    $\chi = \prod_p \chi_p$ is the factorization of $\chi$ into characters
    of prime-power modulus. Set $\alpha_2$ to be an integer such
    that $\chi_2(5) =
    e(\alpha_2/2^{e_2 - 2})
    $, and for each other prime $p$
    set $\alpha_p$ to be an integer such that $\chi_p(p + 1) = e(\alpha_p/p^{e_p - 1})$.
    Then $S(\chi) \ne 0$ if and only if all of the following hold:
    \begin{enumerate}
        \item[(i)] $\chi_p$ is even for each $p$;
        \item[(ii)] For each $p$ with $e_p > 1$, the residue $\alpha_p q/p^{e_p}$ is a square modulo $p$;
        \item[(iii)] $e_2 > 2$, and $\alpha_2 \equiv 3q/ 2^{e_2} \pmod{2^{e_2-2}}$ if $3 \le e_2 \le 5$ or
            $\alpha_2 \equiv -q/2^{e_2} \mod 8$ if $e_2 \ge 6$.
    \end{enumerate}

    In this case, let $\psi = \prod_{p | q} \psi_p$ be a character \textup{(mod }$2q$\textup{)} such
    that $\psi^2 = \chi$. Then $S(\chi) = \Seven \, \Sodd$,
    where
    \begin{equation} \label{Splus}
        \Seven = \left\{\begin{array}{ll} 
        2^{e_2}  \,  \psi_2(q/2^{e_2})^2 \, e(-q/2^{2e_2 + 1}), & \mathrm{if}\ e_2 <6, \\
        4 \, \psi_2(-q/2^{e_2}) \, \tau(\psi_2), & \mathrm{if}\ e_2 \ge 6,
        \end{array}
        \right.
    \end{equation}
    and
    \begin{equation} \label{Sminus}
    \begin{split}
        \Sodd &= \prod_{\substack{p | q \\ p\ \mathrm{odd}} }
            \psi_p(-2q/p^{e_p})\big(\tau(\psi_p) + \epsilon_p(-2q/p^{e_p}) \, \tau(\epsilon_p \psi_p)\big) \\
             &= \prod_{\substack{p || q \\ p\ \mathrm{odd}} }
            \psi_p(-2q/p^{e_p})\big(\tau(\psi_p) + \epsilon_p(-2q/p^{e_p}) \, \tau(\epsilon_p \psi_p)\big)
                \prod_{\substack{p^2 | q \\ p\ \mathrm{odd}} } 2 \, \psi_p(-2q/p^{e_p}) \, \tau(\psi_p),
    \end{split}
    \end{equation}
    where $\epsilon_p$ is the quadratic character \textup{(mod }$p$\textup{)}.
\end{proposition}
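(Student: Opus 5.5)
The plan is to factor $S(\chi)$ over the primes dividing $q$ using the Chinese remainder theorem, and then evaluate each local factor.

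\textbf{Step 1: factorization.} Write $2q = 2^{e_2+1}\prod_{p \text{ odd}} p^{e_p}$ and put $M_p = 2q/p^{v_p(2q)}$. By CRT, $n \bmod 2q$ corresponds to a tuple $(n_p)$, and
\[
\frac{n^2}{2q} \equiv \sum_p \frac{\overline{M_p}\, n_p^2}{p^{v_p(2q)}} \pmod 1 .
\]
Hence
\[
S(\chi)=\prod_p S_p,\qquad S_p=\sum_{x \bmod p^{v_p(2q)}} \chi_p(x)\, e\!\left(\frac{-\overline{M_p}\,x^2}{p^{v_p(2q)}}\right).
\]
Here $\chi_2$ is viewed modulo $2^{e_2+1}$, and $\overline{M_p}$ is the inverse of $M_p$ modulo $p^{v_p(2q)}$. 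So $S(\chi)\neq 0$ exactly when every local factor is nonzero, and $\Seven$, $\Sodd$ are these local products, up to rewriting the units $\overline{M_p}$ via $\psi_p$.

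\textbf{Step 2: odd primes.} Substitute $y=x^2$. Then
\[
\sum_x \chi_p(x)\, e(ax^2/p^k)=\sum_y \big(1+\epsilon_p(y)\big)\,\eta(y)\, e(ay/p^k)
\]
whenever $\chi_p=\eta^2$. If $\chi_p$ is not a square among characters mod $p^k$ (equivalently $\chi_p$ odd for $p$ odd), the sum vanishes; this gives (i).

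If $\chi_p=\psi_p^2$, the sum becomes $\overline{\psi_p}(a)\tau(\psi_p)+\overline{\epsilon_p\psi_p}(a)\tau(\epsilon_p\psi_p)$, which matches the displayed $\Sodd$ factor after writing $a=-\overline{M_p}$.

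\emph{Case $e_p=1$.} Here $|\tau(\psi_p)|=|\tau(\epsilon_p\psi_p)|=\sqrt p$, unless one of the two characters is trivial. Nonvanishing means the two terms do not cancel. I expect to treat the cancellation condition through the relation to the Jacobi sum $J(\psi_p,\epsilon_p)$, which produces the $(p-3)/2$ count later. The Proposition places no condition here beyond (i), so it should suffice to note that the formula holds.

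\emph{Case $e_p\ge2$.} With $\psi_p$ primitive of conductor $p^{e_p}$, the character $\epsilon_p\psi_p$ is also primitive, and $\tau(\epsilon_p\psi_p)$ relates to $\tau(\psi_p)$ by the standard stationary-phase evaluation of Gauss sums to prime-power moduli. For $p^k$ with $k\ge2$ one has the postnikov-type formula $\tau(\psi)=p^{k/2}\,\overline{\psi}(\beta)\,e(\beta/p^k)\times(\text{quadratic-residue factor})$, where $\psi(1+p u)=e(\alpha u/p^{k-1})$ and $\beta$ is determined by $\alpha$. Applying this to both terms, they either coincide (giving $2\tau(\psi_p)$ times a unit) or cancel, according to whether $\epsilon_p$ evaluated at the relevant point is $+1$. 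Tracking the constants should give exactly condition (ii): $\alpha_p q/p^{e_p}$ must be a square mod $p$.

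\textbf{Step 3: $p=2$.} This is a Gauss sum modulo $2^{e_2+1}$ twisted by $\chi_2$ of conductor $2^{e_2}$.
\begin{itemize}
\item For $e_2\le 5$, the characters are few enough to evaluate $S_2$ by direct finite computation over the characters mod $2^{e_2}$ and the unit classes $-\overline{M_2}$ modulo 8 (or mod $2^{e_2-2}$). This yields the congruence for $\alpha_2$ in (iii) and the closed form $2^{e_2}\psi_2(\cdot)^2 e(\cdot)$.
\item For $e_2\ge6$, I would again apply $p$-adic stationary phase. Write $x=x_0(1+2^{\lceil k/2\rceil}t)$, split into a sum over $x_0$ and a linear sum in $t$, and find that nonvanishing forces $\alpha_2\equiv -q/2^{e_2}\pmod 8$. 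The surviving sum collapses to $4\psi_2(-q/2^{e_2})\tau(\psi_2)$.
\end{itemize}
Evenness of $\chi_2$, and of $\chi$ overall, comes from the earlier parity observation.

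\textbf{Main obstacle.} The main difficulty is the $2$-adic analysis: keeping the bookkeeping of $\overline{M_2}$, the existence of a square root $\psi_2$ of $\chi_2$ modulo $2^{e_2+1}$, and the exact congruence conditions modulo 8 correct. To handle it, I would first verify small cases numerically, using the mod 8 and mod 16 examples as checks.
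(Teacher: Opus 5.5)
Your architecture agrees with the paper's. You use a CRT factorization into local sums; the paper first writes $S(\chi)=S_2(\psi,-1)$ via Lemma~\ref{square} and then factors, which is equivalent. You detect squares, turning each odd local factor into $\overline{\psi_p}(a)\bigl(\tau(\psi_p)+\epsilon_p(a)\tau(\epsilon_p\psi_p)\bigr)$. You finish with a finite computation for small $e_2$.

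There is, however, a genuine gap at odd primes $p\,\|\,q$. You say that since the proposition imposes no condition there beyond (i), ``it should suffice to note that the formula holds.'' This is backwards. Precisely because (i)--(iii) impose nothing else at such $p$, the ``if'' direction requires you to prove that $\tau(\psi_p)+\epsilon_p(a)\tau(\epsilon_p\psi_p)\neq 0$ for every $\psi_p$ with $\psi_p^2$ primitive mod $p$ and every unit $a$. Here $\psi_p$ and $\epsilon_p\psi_p$ are distinct, nontrivial, non-quadratic characters mod $p$, and both Gauss sums have modulus exactly $\sqrt p$. So nothing about sizes prevents cancellation, and you must show $\tau(\psi_p)/\tau(\epsilon_p\psi_p)\neq\pm1$. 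This is the one genuinely arithmetic input of the proposition. The paper gets it in one of two ways:
\begin{itemize}
\item from Evans' generalization of Chowla--Mordell (a ratio of Gauss sums of distinct characters mod $p$ is never a root of unity); or
\item from $\tau(\epsilon_p\psi_p)/\tau(\psi_p)=\tau(\epsilon_p)/J(\psi_p,\epsilon_p)$, with $J(\psi_p,\epsilon_p)\in\mathbb{Q}(\zeta_{p-1})$ but $\tau(\epsilon_p)=\sqrt{\pm p}\notin\mathbb{Q}(\zeta_{p-1})$, since that field is unramified at $p$.
\end{itemize}
Your remark that the Jacobi sum ``produces the $(p-3)/2$ count'' also runs the wrong way. $(p-3)/2$ is just the number of even primitive characters mod $p$, and the count in Theorem~\ref{thm:counting_q} is right only \emph{because} cancellation never occurs.

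The prime-power cases are plausible but only sketched (``tracking the constants should give'', ``collapses to''). What you need is the twisting identity $\tau(\xi\psi)=\xi(-\beta)\tau(\psi)$. Here $\psi$ is primitive mod $p^a$, $\xi$ has conductor $p^b$ with $b\le a/2$, and $\psi(p^\ell+1)=e(\beta/p^{a-\ell})$ for some $a/2\le \ell\le a-b$. The paper proves this by exactly the stationary-phase splitting $n=m+np^\ell$ you have in mind. It then passes to $\psi_p(p+1)$ or $\psi_2(5)$ via binomial congruences.

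At $p=2$, the same square detection you used at odd primes gives $\sum_{\xi \bmod 8}\overline{\xi\psi_2}(a)\tau(\xi\psi_2)$. This works because a unit mod $2^k$, $k\ge 3$, is a square iff it is $1 \bmod 8$, and then it has four roots. The twisting identity plus orthogonality mod $8$ then yields both the congruence and the value $4\overline{\psi_2}(a)\tau(\psi_2)$ at once. A fresh $2$-adic stationary-phase computation would still leave you to identify the surviving sum with $\tau(\psi_2)$.

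Finally, (ii) and (iii) are stated in terms of $\chi_p$, so you must translate. For odd $p$, use $\alpha_p\equiv 2\beta_p \pmod p$. For $p=2$, note that an even $\chi_2$ mod $2^{e_2}$ is the square of some $\psi_2$ mod $2^{e_2+1}$, and its $\beta_2$ is determined by $\alpha_2$ modulo $8$ once $e_2\ge 5$.
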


{\sc Remarks}.

\smallskip

($i$) Note, for example, that there is no even primitive character (mod $3$), so $N(q) = 0$ if $q$ is divisible by $3$ but not $9$.

\smallskip

($ii$) From the above evaluation of $S(\chi)$, we see that when $S(\chi)\ne 0$, the value $|S(\chi)|$ is easy to compute whenever $q$ is square-full. For example, $|S(\chi)| = 2^{\omega(q)+3/2} \sqrt{q}$ if $q$ is square-full and $e_2 \ge 6$, where as usual $\omega(q)$ is the number of distinct prime divisors of $q$. On the other hand, for a character $\psi_p$ (mod $p$), the quantity
\[
 \tau(\psi_p) \pm \tau(\epsilon_p \psi_p)
\]
may be much smaller than $2\sqrt{p}$. We discuss this in more detail in Section
\ref{section:distribution}.
%In fact, by writing this in terms of Jacobi sums, we will see that, once this sum is squared, suitably normalized, and rotated, it should be distributed like the real part of a uniform random number on the unit circle. (See Remark with \eqref{eq:remark}.)
%\end{remark}

\smallskip

We conclude this subsection by describing a simple sub-family of characters $\chi$ for which $S(\chi) \ne 0$ which follows directly from Proposition \ref{prop:S1}. 

\begin{example}
Let $q= 8d$, where $d$ is an odd square-free number. Then the primitive characters $\chi$ \textup{(mod }$q$\textup{)} for which $S(\chi) \ne 0$ are precisely those characters of the form $\chi = \psi^2$, where
    $\psi$ is a primitive character \textup{(mod }$16d$\textup{)}. 
    
    \smallskip

To see why, note that part \textup{(i)} of Proposition \ref{prop:S1} implies that $\chi$ is a product of the even primitive character \textup{(mod }$8$\textup{)} times the product of even primitive characters \textup{(mod }$p$\textup{)} for each prime $p$ dividing $d$. When $p$ is odd, even primitive characters  \textup{(mod }$p$\textup{)} are precisely the squares of non-quadratic primitive characters  \textup{(mod }$p$\textup{)}. The even primitive character \textup{(mod }$8$\textup{)} is the square of a primitive character \textup{(mod }$16$\textup{)}. 
Therefore, $S(\chi) \ne 0$ if and only if $\chi$ is the square of a primitive character \textup{(mod }$16d$\textup{)}. 
\end{example}

\subsection{The \texorpdfstring{$Z$}{Z}-function of degree two \texorpdfstring{$L$}{L}-functions} It is known that $L$-functions associated to cuspidal automorphic representations of GL$(2)$ over the rationals correspond to primitive holomorphic cusp forms and primitive Hecke--Maass cusp forms. In contrast to the thin family of Dirichlet $L$-functions associated to the characters described in Proposition \ref{prop:S1}, we show that the integral of the $Z$-functions of these degree two $L$-functions {\it never} has a main term of size $T^{3/4}$. 

\smallskip

Let $f \in S_k(\Gamma_0(N),\chi)$ be a primitive holomorphic cusp form of weight $k \ge 1$, level $N$, and nebentypus $\chi$, and let
\[
f(z) = \sum_{n\ge 1} \lambda_f(n) \, n^{(k-1)/2} \, e (nz)
\]
be its normalized Fourier expansion at the cusp $\infty$. Then the $L$-function $L(s,f) = \sum_{n\ge 1} \lambda_f(n) n^{-s}$ extends to an entire function and satisfies the functional equation, in asymmetric form,
\[
L(s,f) = 
\mathcal{G}_f(s) \, L(1\!-\!s,\overline{f}),
\quad \textup{where}
\quad
\G_f(s) 
= \varepsilon_f \,N^{1/2-s} 
\frac{\Gamma_{\mathbb{R}}(1\!-\!s\!+\!\frac{k-1}{2})}{\Gamma_{\mathbb{R}}(s\!+\!\frac{k-1}{2})} \frac{\Gamma_{\mathbb{R}}(1\!-\!s\!+\!\frac{k+1}{2})}{\Gamma_{\mathbb{R}}(s\!+\!\frac{k+1}{2})},
%=\frac{\tau(\chi)}{q^{1/2}}q^{1/2-s}\frac{\pi^{(1-s)/2}\Gamma((1-s)/2)}{\pi^{s/2}\Gamma(s/2)}.
\]
where $\varepsilon_f$ is the root number of $L(s,f)$, and $\overline{f} \in
S_k(\Gamma_0(N),\overline\chi)$ is the dual form of $f$ with Fourier coefficients
satisfying $\lambda_{\overline{f}}(n) =\overline{\lambda_f(n)}$. Then we define
\[
Z_f(t) = \G_f(\tfrac{1}{2}\!+\!it)^{-1/2} \, L(\tfrac{1}{2}\!+\!it,f).
\]
As before, $Z_f(t)$ is real-valued for real $t$
and $|Z_f(t)| = |L(\tfrac{1}{2}+it,f)|$.

\begin{theorem}\label{thm:holo}
Let $f \in S_k(\Gamma_0(N),\chi)$ be a primitive holomorphic cusp form. Then
\[
\int_0^T Z_f(t) \ \mathrm{d}t \ll_f \sqrt{T} \log T,
\]
as $T\to \infty$.
%where the implied constant depends on $f$.
\end{theorem}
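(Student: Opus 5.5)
We will follow the strategy that underlies Theorem \ref{thm:GL1}, adapted to degree two. First we write $Z_f(t)$ using an approximate functional equation of length about $\sqrt{N}\,t/(2\pi)$:
\[
Z_f(t)=\G_f(\tfrac12+it)^{-1/2}\sum_{n}\frac{\lambda_f(n)}{n^{1/2+it}}V\Big(\frac{n}{t}\Big)+\G_f(\tfrac12+it)^{1/2}\sum_n\frac{\overline{\lambda_f(n)}}{n^{1/2-it}}V\Big(\frac{n}{t}\Big)+O(t^{-A}),
\]
with a smooth cutoff $V$. The second sum is the complex conjugate of the first, so it suffices to treat $2\,\mathrm{Re}$ of the first. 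Stirling's formula gives $\G_f(\tfrac12+it)^{-1/2}=c_f\exp\big(i\,t\log\frac{t}{2\pi e}+i\theta_f(t)\big)$ with $|c_f|=1$ and $\theta_f(t)\to0$, plus smooth lower-order corrections. In contrast to the degree one case, the phase has coefficient $t\log t$ rather than $\frac{t}{2}\log t$. Note also that the dyadic smoothing in $t$ is harmless.

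Next we integrate term by term over dyadic pieces $t\in[T',2T']$. Each $n$ contributes
\[
\frac{\lambda_f(n)}{\sqrt n}\int e^{i\phi_n(t)}w(t)\,dt,\qquad \phi_n(t)=t\log\frac{t}{2\pi e\, n^{1/2}}+\dots .
\]
Here $\phi_n'(t)=\log\frac{t}{2\pi\sqrt n}$, so there is a stationary point at $t_n=2\pi\sqrt{n}$. Since $\phi_n''=1/t$, this point contributes $\sqrt{2\pi t_n}\,\frac{\lambda_f(n)}{\sqrt n}e^{-it_n+i\pi/4}\asymp n^{-1/4}\lambda_f(n)\,e(-\sqrt n)$. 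Terms with $t_n\notin[0,T]$ are handled by the first derivative test and contribute $O(\log T)$ per dyadic block, summed over $n\ll T$, after using $\sum_{n\le x}|\lambda_f(n)|^2/n\ll\log x$ and Cauchy--Schwarz. The endpoint contributions at $t=T$ need care. They produce a sum of the form $\sum_n \lambda_f(n)n^{-1/2}(\log(T/2\pi\sqrt n))^{-1}n^{-iT}$, which should be bounded via the truncated Voronoi-type behaviour; crudely it is $\ll \sqrt T\log T$ after splitting off a neighbourhood $|t_n-T|\le T^{1/2}$, where the second derivative test replaces the first.

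The main term is therefore
\[
\sum_{n\le (T/2\pi)^2}\lambda_f(n)\,n^{-1/4}e(-\sqrt n)
\]
(times constants). In degree one the analogue, $\sum\chi(n)e(-n^2/2q)$, has a genuine density, which is exactly where $S(\chi)$ enters. Here, however, the weights are $n^{-1/4}\asymp T^{-1/2}$. The key input is a nontrivial bound for the twisted sum $\sum_{n\le x}\lambda_f(n)e(-\sqrt n)$. It is classical (Iwaniec, Luo, and others, via Voronoi summation) that $\sum_{n\le x}\lambda_f(n)e(\alpha\sqrt n)$ has a resonance only at $\alpha=\pm2\sqrt{m}$ for integers $m$, of size $x^{3/4}$. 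For $\alpha=-1$ there is no resonance, since $1/2$ is not a square root of an integer, and the sum is $\ll_f x^{1/4+\varepsilon}$, or $\ll x^{1/4}\log x$ with care.

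Partial summation with $x=(T/2\pi)^2$ then gives a main term $\ll T^{-1/2}x^{1/4}\log x\cdot\sqrt{T}\ll\sqrt T\log T$. More precisely, the sum is $\ll x^{1/4}\log x$ weighted by $n^{-1/4}$ and multiplied by the stationary-phase factor $\sqrt{t_n}\asymp n^{1/4}$. This needs rebalancing: in terms of $t$ the contribution is $\sum_{n\le x}\lambda_f(n)e(-\sqrt n)$ times $O(1)$ amplitude, which is $\ll x^{1/4}\log x\asymp\sqrt T\log T$, as required.

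The main obstacle is this last step: proving that $\sum_{n\le x}\lambda_f(n)e(-\sqrt n)\ll x^{1/4}\log x$. We will attempt it by applying Voronoi summation with a smooth weight. This turns the sum into $\sum_m\lambda_f(m)m^{-3/4}\int J_{k-1}(4\pi\sqrt{mu})e(-\sqrt u)\,du$, whose phases $2\sqrt{mu}-\sqrt u$ are never stationary because $2\sqrt m\neq1$. This gives the $x^{1/4}$ bound, with the $\log$ coming from the sharp cutoff.
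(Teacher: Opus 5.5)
There is a genuine gap in the stationary-phase step, so the main term you derive, and the ``main obstacle'' you set yourself, concern a sum that does not occur. The factor $n^{-it}$ contributes $-t\log n$ to the phase, not $-\tfrac{t}{2}\log n$. Since $\mathcal{G}_f(\frac12+it)^{-1/2}$ has phase $t\log\frac{\sqrt N\,t}{2\pi e}+O(1)$ (you also dropped the $\sqrt N$), the $n$th term has
\[
\phi_n(t)=t\log\frac{\sqrt N\,t}{2\pi e\,n},\qquad \phi_n'(t)=\log\frac{\sqrt N\,t}{2\pi n},\qquad \phi_n''(t)=\frac1t .
\]
So the stationary point is $t_n=2\pi n/\sqrt N$, which is linear in $n$, not $2\pi\sqrt n$. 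Then $\phi_n(t_n)=-t_n$, and the stationary point contributes $\frac{\lambda_f(n)}{\sqrt n}\sqrt{2\pi t_n}\,e^{-it_n+i\pi/4}=2\pi N^{-1/4}e^{i\pi/4}\,\lambda_f(n)\,e(-n/\sqrt N)$, with \emph{bounded} amplitude. The main term is therefore, up to a constant,
\[
\sum_{n\le \sqrt N\,T/(2\pi)}\lambda_f(n)\,e\!\left(-\frac{n}{\sqrt N}\right).
\]
This is a sum of length $\asymp T$, and it is exactly what Proposition \ref{mainlemma} gives for $d=2$ (there $n^{1/d-1/2}=1$). Your range $n\le (T/2\pi)^2$ is already inconsistent with the approximate functional equation you wrote, which only involves $n\ll T$. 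Your amplitude bookkeeping (weight $n^{-1/4}$, then ``rebalanced'' to $O(1)$) also does not hold together. Once the main term is right, the argument closes with Wilton's classical bound $\sum_{n\le x}\lambda_f(n)e(n\alpha)\ll_f x^{1/2}\log x$, uniform in $\alpha\in\mathbb{R}$, applied with $x\asymp T$. No resonance analysis or Voronoi summation against $e(-\sqrt n)$ is needed.

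There is a second, smaller gap in how you handle terms whose stationary point lies near the endpoints. After stationary phase these produce $T^{1/2}\sum_n|\lambda_f(n)|/(|n-\alpha\mathcal{T}_f|+\sqrt T)$ with $\alpha\in\{1,2\}$ and $\mathcal{T}_f=\sqrt N\,T/(2\pi)$. To bound this by $T^{1/2}\log T$ you need $\sum_{x<n\le x+\sqrt x}|\lambda_f(n)|\ll_f\sqrt x$ with no logarithmic loss, applied on each subinterval of length $\sqrt T$ across $O(\log T)$ dyadic ranges of $|n-\alpha\mathcal{T}_f|$. The paper gets this from Shiu's Brun--Titchmarsh theorem for multiplicative functions, together with $\sum_{p\le x}|\lambda_f(p)|/p\le\log\log x+O(1)$. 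The mean-square bound $\sum_{n\le x}|\lambda_f(n)|^2/n\ll\log x$ with Cauchy--Schwarz, which you invoke, gives no control on intervals of length $\sqrt T$. The divisor bound $|\lambda_f(n)|\le d(n)$ alone would only give $\sqrt T(\log T)^2$. Note also that the paper avoids the approximate functional equation altogether: it moves the line of integration to $\mathrm{Re}(s)=1+1/\log\mathcal{T}_f$, where the Dirichlet series converges absolutely, and bounds the horizontal segments by Phragm\'en--Lindel\"of. In your set-up the stationary points $t_n$ sit exactly at the edge $n\approx\sqrt N t/(2\pi)$ of the cutoff $V$, so the value of $V$ there has to be tracked.
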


Let $\varphi$ be a primitive Hecke--Maass cusp form of level $N$ with nebentypus $\chi$ which is an eigenfunction of the Laplace operator with eigenvalue $\lambda = \frac{1}{4}+r^2$, where $r\in \mathbb{R}$ or $ir \in [0,\frac{1}{2})$. For $z=x+iy$, we write the Fourier expansion of $\varphi(z)$ at $\infty$ as
\[
\varphi(z) = \sqrt{y} \sum_{n\in \mathbb{Z}\setminus\{0\}} \lambda_{\varphi}(n) K_{ir}\big( 2\pi|n| y\big) \, e(nx).
\]
Then the $L$-function $L(s,\varphi) = \sum_{n\ge 1} \lambda_{\varphi}(n) \, n^{-s}$ extends to an entire function and satisfies the functional equation, in asymmetric form,
\[
L(s,\varphi) = 
\mathcal{G}_\varphi(s) \, L(1\!-\!s,\overline{\varphi}),
\quad \textup{where}
\quad
\G_\varphi(s) 
= \varepsilon_\varphi \,N^{1/2-s} 
\frac
{\Gamma_{\mathbb{R}}(1\!-\!s\!+\!\delta\!+\!ir)}{\Gamma_{\mathbb{R}}(s\!+\!\delta\!+\!ir)}
 \frac
{\Gamma_{\mathbb{R}}(1\!-\!s\!+\!\delta\!-\!ir)}{\Gamma_{\mathbb{R}}(s\!+\!\delta-ir)},
%=\frac{\tau(\chi)}{q^{1/2}}q^{1/2-s}\frac{\pi^{(1-s)/2}\Gamma((1-s)/2)}{\pi^{s/2}\Gamma(s/2)}.
\]
where $\varepsilon_\varphi$ is the root number of $\varphi$, $\delta=0$ if $\varphi$ is even, $\delta=1$ if $\varphi$ is odd, and $L(s,\overline{\varphi})$ is the $L$-function with coefficients $\lambda_{\overline{\varphi}}(n)=\overline{\lambda_{\varphi}(n)}$. Then we define
\[
Z_\varphi(t) = \G_\varphi(\tfrac{1}{2}\!+\!it)^{-1/2} \, L(\tfrac{1}{2}\!+\!it,\varphi).
\]

%$\varepsilon_f$ is the root number of $L(s,f)$, and $\overline{f} \in S_k(\Gamma_0(N),\chi)$ is the dual form of $f$ with Fourier coefficients satisfying $\lambda_{\overline{f}}(n) =\overline{\lambda_f(n)}$. 
%
%When $f$ is a Maass cusp form of level $N_f$ with eigenvalue $\frac{1}{4}+r^2$, the $Z$-function associated to $f$ is $Z_f(t) = \G_f(\tfrac{1}{2}+it)^{-1/2}L_f(\tfrac{1}{2}+it)$ with 
%\[
%\G_f(s) = \epsilon_{f} N_f^{\frac{1}{2}-s} \frac
%{\Gamma_{\mathbb{R}}(1-s+\delta+ir)}{\Gamma_{\mathbb{R}}(s+\delta+ir)}
% \frac
%{\Gamma_{\mathbb{R}}(1-s+\delta-ir)}{\Gamma_{\mathbb{R}}(s+\delta-ir)},
%\]
%where $|\epsilon_f|=1$, $\delta=1$ if $f$ is odd and $\delta=0$ if $f$ is even.

\begin{theorem}\label{thm:Maass}
Let $\varphi$ be a primitive Hecke--Maass cusp form of level $N$ with nebentypus  $\chi$. Then, as $T\to \infty$, we have
\[
\int_0^T Z_\varphi(t) \ \mathrm{d}t \ll_{\varphi, \varepsilon}  T^{\frac{11}{20}+\varepsilon},
\]
for any $\varepsilon>0$. 
\end{theorem}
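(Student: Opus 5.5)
We follow the same route as for Theorem \ref{thm:GL1} and Theorem \ref{thm:holo}: express $\int_0^T Z_\varphi(t)\,\mathrm{d}t$ as an exponential integral and evaluate it by stationary phase. The difference from the holomorphic case is that, for Maass forms, the arithmetic input is sums of $\lambda_\varphi(n)$ twisted by oscillatory phases $e(c\sqrt{n})$, and these are controlled less well.

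\textbf{Step 1: reduce to a contour integral.} Write $Z_\varphi(t)=\G_\varphi(\tfrac12+it)^{-1/2}L(\tfrac12+it,\varphi)$. Then
\[
\int_0^T Z_\varphi(t)\,\mathrm{d}t=\frac1i\int_{1/2}^{1/2+iT}\G_\varphi(s)^{-1/2}L(s,\varphi)\,\mathrm{d}s .
\]
Shift the contour to $\mathrm{Re}(s)=1+\eta$. On the horizontal segment at height $T$, use the convexity bound for $L(s,\varphi)$ and the size $|\G_\varphi(\sigma+iT)^{-1/2}|\asymp T^{\sigma-1/2}$. This segment contributes $O(T^{1/2+\varepsilon})$, which is acceptable.

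\textbf{Step 2: expand into exponential integrals.} On the line $\mathrm{Re}(s)=1+\eta$, expand $L$ as a Dirichlet series. The integral becomes
\[
\sum_n \lambda_\varphi(n)\,n^{-1-\eta}\int \G_\varphi(1+\eta+it)^{-1/2}\,n^{-it}\,\mathrm{d}t .
\]
By Stirling, the phase of $\G_\varphi(\tfrac12+it)^{-1/2}$ is $t\log\frac{t}{2\pi e}+\frac{t}{2}\log N+O(1/t)$ for a degree-two $L$-function. Hence the $t$-integral has phase $t\log\frac{t\sqrt N}{2\pi e\, n}$, whose stationary point is $t_n=2\pi n/\sqrt N$.

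The stationary point lies in $[0,T]$ only for $n\le T\sqrt N/(2\pi)$. There, the weight $|\G_\varphi^{-1/2}|\asymp t^{1/2+\eta}$ together with the stationary-phase factor $t^{1/2}$ gives a contribution $\asymp n^{-1-\eta}\,t_n^{1+\eta}\cdot e(\text{phase})\asymp e(\cdots)$ of size $O(1)$ for each $n$. At the stationary point the phase is $-t_n$, i.e.\ $e(-n/\sqrt N)$ up to a constant. This yields a sum
\[
\sum_{n\le T\sqrt N/2\pi}\lambda_\varphi(n)\,e\Bigl(-\frac{n}{\sqrt N}\Bigr)w(n)
\]
with a smooth weight $w$ of size $\asymp 1$, plus boundary terms from $t=T$. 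Note that for Dirichlet characters the degree-one phase produces instead $e(-n^2/2q)$, which is where $S(\chi)$ came from.

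The boundary terms at $t=T$ come from first-derivative estimates and give a sum of the shape
\[
T^{1/2}\sum_n \frac{\lambda_\varphi(n)}{n}\cdot\frac{1}{\log(T\sqrt N/2\pi n)} .
\]
This is handled exactly as in the proof of Theorem \ref{thm:holo}, using the fact that $L(s,\varphi)$ has no pole at $s=1$, and contributes $T^{1/2+\varepsilon}$.

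\textbf{Step 3: bound the twisted sum.} The whole problem reduces to bounding
\[
\sum_{n\le X}\lambda_\varphi(n)\,e(\alpha n),\qquad \alpha=-1/\sqrt N,\quad X\asymp T .
\]
For holomorphic forms, the Wilton/Jutila bound $\ll X^{1/2}\log X$ holds uniformly in $\alpha$, which gives Theorem \ref{thm:holo}. For Maass forms the uniform-in-$\alpha$ Wilton bound is not available unconditionally, because it requires Ramanujan-type control.

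This step is the main obstacle. The first thing to try is the Hecke–Maass analogue of Wilton's estimate with the Kim–Sarnak exponent $\theta=7/64$ entering the large-sieve/Voronoi argument. Alternatively, use Epstein–Hafner–Sarnak/Iwaniec type bounds of the form
\[
\sum_{n\le X}\lambda_\varphi(n)e(\alpha n)\ll X^{1/2+\theta'+\varepsilon}
\]
(or $X^{11/20+\varepsilon}$ when $\alpha$ is irrational-type). The cleanest route is to split $\alpha=-1/\sqrt N$ by Dirichlet approximation $\alpha=a/c+\beta$. Voronoi summation for the modulus $c$, together with the Rankin–Selberg bound $\sum_{n\le X}|\lambda_\varphi(n)|^2\ll X$, then gives
\[
\ll_\varepsilon c^{1/2}X^{1/2+\varepsilon}+\frac{X^{1/2+\varepsilon}}{\,(c^2|\beta|X)^{-1/2}\,}\ \text{-type terms}.
\]
Choosing $c\approx X^{1/10}$ balances these terms at $X^{11/20+\varepsilon}$. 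Summing by parts against the smooth weight $w$ preserves this bound, and the conclusion is $\int_0^T Z_\varphi\ll T^{11/20+\varepsilon}$.
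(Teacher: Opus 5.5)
\textbf{The gap is in Step 2.} You dispose of the ``boundary terms'' with a first-derivative estimate, but this is exactly where the exponent $\frac{11}{20}$ comes from. The weight $1/\log(T\sqrt N/2\pi n)$ is singular at the transition point $n\approx\mathcal{T}_\varphi=\sqrt N\,T/2\pi$. For $|n-\mathcal{T}_\varphi|\lesssim\sqrt T$ the first-derivative test is useless, and the second-derivative bound gives a contribution of size $|\lambda_\varphi(n)|$ for each such $n$. (You have also dropped the oscillating factor $n^{-iT}$.) Done correctly, these terms are the error term of Proposition \ref{mainlemma} with $d=2$,
\[
T^{1/2}\max_{\alpha\in\{1,2\}}\sum_{\frac{3\alpha}{4}\mathcal{T}_\varphi\le n\le\frac{5\alpha}{4}\mathcal{T}_\varphi}\frac{|\lambda_\varphi(n)|}{|n-\alpha\mathcal{T}_\varphi|+\sqrt T},
\]
and bounding this requires control of sums of $|\lambda_\varphi(n)|$ over intervals of length $\sqrt x$.

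In the paper's holomorphic argument that control comes from Shiu's theorem \cite{shiu}, which needs Deligne's bound to place $|\lambda_f|$ in Shiu's class. It does not come from the absence of a pole at $s=1$, which says nothing about a short sum of $|\lambda_\varphi(n)|$ against a singular weight. For Maass forms Shiu's theorem is unavailable. The paper instead combines the Rankin--Selberg asymptotic $\sum_{n\le x}|\lambda_\varphi(n)|^2=c_\varphi x+O(x^{3/5+\varepsilon})$ with Cauchy--Schwarz to get $\sum_{x<n\le x+\sqrt x}|\lambda_\varphi(n)|\ll x^{1/4}(x^{3/5+\varepsilon})^{1/2}\ll x^{11/20+\varepsilon}$ (Lemma \ref{shortsums}). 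It then splits dyadically in $|n-\alpha\mathcal{T}_\varphi|$ to bound the error term by $T^{11/20+\varepsilon}$. Your claim that these terms are $O(T^{1/2+\varepsilon})$ would, together with a Wilton bound, give $T^{1/2+\varepsilon}$. The paper reaches that only under Ramanujan--Petersson, so the claim cannot be asserted without proof.

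\textbf{Step 3 is not the obstacle you think it is.} The uniform bound $\sum_{n\le x}\lambda_\varphi(n)e(n\alpha)\ll_\varphi x^{1/2}\log x$ holds unconditionally for Maass cusp forms, with no Ramanujan-type input (Hafner \cite[Theorem 3]{hafner}; see also \cite{ehs} and \cite[Theorem 8.1]{iwaniec2}). The paper uses it to show that the main term is $O(T^{1/2}\log T)$. Your Dirichlet-approximation/Voronoi sketch would not substitute for it anyway: the displayed bound is only a schematic ``-type'' expression, and the choice $c\approx X^{1/10}$ appears reverse-engineered to land on $\frac{11}{20}$.

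\textbf{A smaller point on Step 1.} Shifting the contour over all of $[0,T]$ at once is not legitimate as stated. For even $\varphi$ the function $\G_\varphi(s)^{-1}$ vanishes at $s=1\pm ir$, which can lie in your rectangle, so $\G_\varphi(s)^{-1/2}$ has a branch point there. The paper avoids this by working on $[T,2T]$ with $T$ large and summing dyadically.
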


\smallskip

{\sc Remarks}.

\smallskip

($i$) There is some hope to improve the upper bound on the right-hand side of Theorem \ref{thm:holo} from $\ll_f\sqrt{T} \log T$ to $\ll_{\varepsilon,f} T^{1/3+\varepsilon}$ when the level $N$ is square.  We plan to investigate this possibility in a future work.  

\smallskip

($ii$) The difference in order of magnitude in upper bounds in Theorems
\ref{thm:holo} and \ref{thm:Maass} comes from the current knowledge of
estimates for the coefficients of the $L$-functions in short intervals. See
Equations \eqref{short1} and \eqref{short2}. In particular, the assumption of the
Ramanujan--Petersson conjecture for the coefficients of $L(s,\varphi)$ would
allow us to replace the exponent $\frac{11}{20}+\varepsilon$ by
$\frac{1}{2}+\varepsilon$ in Theorem \ref{thm:Maass}.

%In particular, our proofs rely on the bounds
%\[
%\sum_{x< n \le x+\sqrt{x}} |\lambda_f(n)| \ll \sqrt{x} \, (\log x)^{-\delta} \quad \text{and} \quad \sum_{x< n \le x+\sqrt{x}} |\rho(n)|  \ll_\varepsilon x^{11/20+\varepsilon}. 
%\]
%for some $\delta>0$. The bound for Hecke-Maass cusp forms comes from the size of the error term in Using the Kim-Sarnak \cite{} bound $|\rho(n)| \ll_\varepsilon n^{7/64+\varepsilon}$ in place of this short interval estimate for the coefficients $|\rho(n)|$ would lead to an estimate of the form
%\[
%\int_0^T Z_\varphi(t) \ \mathrm{d}t \ll_{\varphi, \varepsilon}  T^{\frac{1}{2}+\frac{7}{64}+\varepsilon},
%\]
%as $T\to\infty$. The right-hand side here is still $o(T^{3/4})$.

\smallskip

($iii$) It seems difficult with known techniques to prove results analogous to
Theorems \ref{thm:holo} and \ref{thm:Maass} for degree 3 $L$-functions. A
bound of $o(T)$, as $T\to \infty$, for the integral of the $Z$-function
associated to an $L$-function on $[0,T]$ would imply an analogue of Hardy's theorem
that there are infinitely many zeros of that $L$-function on the critical line.
This is a notoriously difficult problem for principal $L$-functions of degree
greater than 2.

\section{Proof of Proposition \ref{prop:S1}}\label{sec:ExpSum}

In this section, we study the structure of the sum $S(\chi)$ and prove Proposition \ref{prop:S1}.

%%%%%%%%%%%%%%% Evaluation of the Sum %%%%%%%%%%%%%%%%%%%

%%%%%%%%%%%%%%% Lemmas %%%%%%%%%%%%%%%%%%%

\subsection{Three lemmas for simplifying Gauss sums}

\begin{lemma} \label{lem:gauss-sum}
    Let $\chi$ be a primitive character \textup{(mod }$p^a$\textup{)} for some $a > 1$, and $\psi$ a
    character \textup{(mod }$p^b$\textup{)} for some $b \le a/2$. For any integer $\ell$ in the range
    $a/2 \le \ell \le  a - b$ let $\alpha$ be an integer such that
    \[
        \chi(p^\ell \!+\! 1) = e\!\left(\frac{\alpha}{p^{a - \ell}}\right).
    \]
    Then
    \[
        \tau(\psi \chi) = \psi(-\alpha) \, \tau(\chi).
    \]
\end{lemma}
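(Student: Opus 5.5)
The plan is to exploit the fact that on the subgroup $1+p^\ell\mathbb{Z}$ (mod $p^a$) the character $\chi$ becomes additive when $2\ell\ge a$. Since $(1+p^\ell x)(1+p^\ell y)\equiv 1+p^\ell(x+y) \pmod{p^a}$, the map $x\mapsto \chi(1+p^\ell x)$ is an additive character modulo $p^{a-\ell}$. It therefore has the form
\[
\chi(1+p^\ell x)=e\!\left(\frac{\alpha x}{p^{a-\ell}}\right),
\]
with $\alpha$ the integer from the statement; the value at $x=1$ fixes it. Because $\chi$ is primitive, $\chi$ is nontrivial on $1+p^{a-1}\mathbb{Z}$, and so $p\nmid\alpha$. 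Indeed, if $p\mid\alpha$ then $\chi(1+p^{a-1}y)=e(\alpha p^{a-1-\ell}y/p^{a-\ell})$ would be trivial (when $\ell\le a-1$). The case $\ell=a$ cannot occur, since $\ell\le a-b$ and we may assume $b\ge1$; if $b=0$ the claim is trivial because $\psi$ is trivial.

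Next I would write $n=m(1+p^\ell x)$ in $\tau(\psi\chi)=\sum_{n \bmod p^a}\psi(n)\chi(n)e(n/p^a)$. Here $m$ runs over units modulo $p^\ell$ (lifted) and $x$ runs modulo $p^{a-\ell}$. Since $\ell\ge b$, we have $\psi(n)=\psi(m)$, because $\psi$ has modulus $p^b\mid p^\ell$. This gives
\[
\tau(\psi\chi)=\sum_{m}\psi(m)\chi(m)e\!\left(\frac{m}{p^a}\right)\sum_{x \bmod p^{a-\ell}} e\!\left(\frac{(\alpha+m)x}{p^{a-\ell}}\right).
\]
The inner sum equals $p^{a-\ell}$ when $m\equiv-\alpha \pmod{p^{a-\ell}}$ and vanishes otherwise. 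Hence only the $m$ with $m\equiv -\alpha \pmod{p^{a-\ell}}$ survive. Since $a-\ell\ge b$, the condition forces $\psi(m)=\psi(-\alpha)$. The factor $\psi(-\alpha)$ then comes out of the sum, and what remains is exactly the same expression obtained by running the computation with $\psi$ trivial, which is $\tau(\chi)$. Thus $\tau(\psi\chi)=\psi(-\alpha)\tau(\chi)$.

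The main point needing care is the bookkeeping of this decomposition. The map $(m,x)\mapsto m(1+p^\ell x)$ must be a bijection onto the units mod $p^a$ when $m$ ranges over a fixed set of lifts of the units mod $p^\ell$. This holds because the units mod $p^a$ are the disjoint union of cosets of the kernel $1+p^\ell\mathbb{Z}/p^a$, which has size $p^{a-\ell}$.

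We also need $\chi(m(1+p^\ell x))e(m(1+p^\ell x)/p^a)$ to factor as above. It does, since $e(mp^\ell x/p^a)=e(mx/p^{a-\ell})$, which combines with $e(\alpha x/p^{a-\ell})$.

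Finally, both constraints $\ell\ge b$ and $a-\ell\ge b$ are used, and both are guaranteed by $a/2\le\ell\le a-b$ together with $b\le a/2$.
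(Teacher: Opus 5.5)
Your proof is correct and is essentially the paper's argument. The paper splits $n = np^\ell + m$ additively and factors $\chi(np^\ell+m)=\chi(m)\,\chi(\overline{m}np^\ell+1)$, which is your coset parametrization $n=m(1+p^\ell x)$ after the substitution $x=\overline{m}n$; it then uses the same orthogonality over $x$, the same observation $\psi(m)=\psi(-\alpha)$, and the same comparison with trivial $\psi$. Your digression proving $p\nmid\alpha$ from primitivity is harmless but unnecessary, since the computation goes through without it.
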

\begin{proof}
    We first note that $(mp^\ell + 1)(np^\ell + 1) = mnp^{2\ell} + (m+n)p^\ell + 1$, and
    $2\ell \ge a$ by hypothesis, so $\chi(np^\ell + 1)$ is an additive character
    as a function of $n$, and more specifically
    \[
        \chi(np^\ell \!+\! 1) = e(n\alpha/(p^{a-\ell})).
    \]
    We now split the Gauss sum $\tau(\psi\chi)$ into the various arithmetic
    progressions (mod $p^\ell$), writing
\[
\begin{split}
  \tau(\psi\chi) =&  \sum_{n=0}^{p^{a-\ell}-1} \sum_{m=0}^{p^\ell-1}
          \psi(np^\ell \!+\! m) \,
            \chi(np^\ell \!+\! m) \,
            e\!\left(\frac{np^\ell + m}{p^a}\right).
\end{split}
\]
    As we restrict to the case where $\ell \ge b$, $\psi(np^\ell + m)$ depends only
    on $m$, so we move it outside the inner sum and evaluate $\chi(np^\ell + m)$
    as $\chi(m)$ times an additive character, writing this sum as
\[
\begin{split}
    \tau(\psi\chi) =& \sum_{\substack{m=1 \\ p \nmid m}}^{
    p^{\ell}-1} \psi(m) \, \chi(m) 
    \sum_{n=1}^{p^{a-\ell}}
            \chi(\overline m np^\ell \!+\! 1) \,
            e\!\left(\frac{np^\ell \!+\! m}{p^a}\right)\\
=& 
\sum_{\substack{m=1 \\ p \nmid m}}^{p^{\ell}-1} \psi(m) \, \chi(m) \, e\!\left(m/p^a\right)
                \sum_{n=1}^{p^{a-\ell}}
                    e\!\left(\frac{\overline m n \alpha \!+\! n}{p^{a-\ell}}\right),
\end{split}
\]
where $\overline m$ denotes the inverse of $m$ modulo $p^{a}$. We now see that
the innermost sum vanishes unless $m \equiv -\alpha \mod{p^{a-\ell}}$, in which
case it is exactly $p^{a-\ell}$, so we find that the sum is equal to
\[
    p^{a-\ell} \sum_{\substack{m=1 \\ p \nmid m \\ m \equiv -\alpha \mod{p^{a-\ell}}}}^{p^{\ell}-1} \psi(m) \, \chi(m) \, e\!\left(m/p^a\right).
\]
Finally, we use the fact that $b \le a - \ell$, so $\psi(m) = \psi(-\alpha)$
when $m \equiv -\alpha \mod{p^{a-\ell}}$, so the whole sum is
\[
    \psi(-\alpha) p^{a-\ell} \sum_{\substack{m=1 \\ p \nmid m \\ m \equiv -\alpha \mod{p^{a-\ell}}}}^{p^{\ell}-1} \chi(m) \, e\!\left(m/p^a\right).
\]
Considering that this expression is equally valid when $\psi$ is the trivial character finishes the proof.
\end{proof}

We extract a few simpler variants of this lemma which allow us to consider
only the values $\chi(p+1)$ (when $p$ is odd) and $\chi(5)$ (when $p=2$).
\begin{lemma}\label{lem:gauss-sum-oddprime}
    Let $\chi$ be a primitive character \textup{(mod }$p^a$\textup{)} for $p > 2$ and some $a > 1$, and $\psi$ a
    character mod $p$. Let $\alpha$ be an integer such that
    \[
        \chi(p + 1) = e\!\left(\frac{\alpha}{p^{a - 1}}\right).
    \]
    Then
    \[
        \tau(\psi \chi) = \psi(-\alpha) \, \tau(\chi).
    \]
\end{lemma}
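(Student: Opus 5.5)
This is a direct specialization of Lemma \ref{lem:gauss-sum}. We take $b=1$ (so $\psi$ is a character mod $p^b = p$) and $\ell = a-1$. We then check the hypotheses of Lemma \ref{lem:gauss-sum} and reconcile the definition of $\alpha$ through the $p$-adic structure of $(\mathbb{Z}/p^a\mathbb{Z})^\times$.

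First, the range condition. We need $b\le a/2$ and $a/2\le \ell\le a-b$. Since $a\ge 2$, we have $1\le a/2$. Also $\ell=a-1=a-b$, and $a-1\ge a/2$ exactly when $a\ge 2$. So Lemma \ref{lem:gauss-sum} applies with this $\ell$, provided we know the integer $\alpha'$ defined by $\chi(p^{a-1}+1)=e(\alpha'/p)$. The lemma then gives $\tau(\psi\chi)=\psi(-\alpha')\tau(\chi)$, and it remains to show $\psi(-\alpha')=\psi(-\alpha)$, i.e.\ $\alpha'\equiv\alpha \pmod p$.

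For this step we use the standard fact that for odd $p$ one has $(1+p)^{p^{a-2}} \equiv 1 + p^{a-1}\,u \pmod{p^a}$ with $u\equiv 1\pmod p$. By the binomial theorem, $(1+p)^{p^{k}}\equiv 1+p^{k+1}\pmod{p^{k+2}}$ for $p$ odd, by induction on $k$; at $k=a-2$ this gives exactly $(1+p)^{p^{a-2}}\equiv 1+p^{a-1}\pmod{p^a}$. Hence
\[
\chi(1+p^{a-1}) = \chi(1+p)^{p^{a-2}} = e\!\left(\frac{\alpha p^{a-2}}{p^{a-1}}\right)=e\!\left(\frac{\alpha}{p}\right),
\]
so we may take $\alpha'=\alpha$ and obtain $\tau(\psi\chi)=\psi(-\alpha)\tau(\chi)$. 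Note that $\chi(1+p)$ is indeed a $p^{a-1}$-th root of unity, since $1+p$ has order $p^{a-1}$ mod $p^a$, so $\alpha$ is well defined mod $p^{a-1}$. Also $\psi(-\alpha)$ depends only on $\alpha$ mod $p$, so any choice of $\alpha$ gives the same answer.

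The only point needing care is the congruence $(1+p)^{p^{a-2}}\equiv 1+p^{a-1}\pmod{p^a}$. Its inductive step uses that the binomial terms beyond the linear one, including $\binom{p}{2}p^{2(k+1)}$ and the cube term, are divisible by $p^{k+3}$. The $\binom p2$ term needs $p$ odd, which is exactly why $p>2$ is assumed here; for $p=2$ the analogous statement must be made with $5$ in place of $p+1$.
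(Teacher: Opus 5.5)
Your proof is correct and follows the paper's argument: apply Lemma~\ref{lem:gauss-sum} with $b=1$ and $\ell=a-1$, then use the congruence $(1+p)^{p^{a-2}}\equiv 1+p^{a-1}\pmod{p^a}$ to get $\chi(p^{a-1}+1)=e(\alpha/p)$. Your check of the range conditions and your induction for the congruence (including why $p>2$ is needed) fill in details the paper only says ``follow easily.''
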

\begin{proof}
    From the previous lemma, we have
    \[
        \tau(\psi \chi) = \psi(-\alpha') \, \tau(\chi),
    \]
    where $\chi(p^{a-1} \!+\! 1) = e(\alpha'/p).$ We need only show that
    $\alpha \equiv \alpha' \mod p$. The congruence
    \[
       (p + 1)^{p^{a-2}} \equiv p^{a-1} \!+\! 1 \mod{p^a}
    \]
    follows easily by expanding using the binomial theorem, from which
    we see that in fact $\chi(p^{a-1} + 1) = e(\alpha/p)$.
\end{proof}

\begin{lemma}\label{lem:gauss-sum-evenprime}
    Let $\chi$ be a primitive character modulo $2^a$, for $a > 6$, and let
    $\epsilon$ be a character \textup{(mod }$8$\textup{)}. Let $\alpha$ be an integer such that
    \[
        \chi(5) = e(\alpha/2^{a - 2}).
    \]
    Then
    \[
        \tau(\epsilon \chi) = \epsilon(\alpha) \, \tau(\chi).
    \]
\end{lemma}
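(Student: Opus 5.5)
We will deduce this from Lemma~\ref{lem:gauss-sum} with $p=2$, in the same way Lemma~\ref{lem:gauss-sum-oddprime} was obtained. Since $\epsilon$ has modulus $8=2^3$, we take $b=3$. The range $a/2\le \ell\le a-3$ is nonempty when $a\ge 6$, and we pick $\ell=a-3$; this satisfies $\ell\ge a/2$ precisely because $a\ge 6$, and the hypothesis $a>6$ certainly suffices. Lemma~\ref{lem:gauss-sum} then gives
\[
\tau(\epsilon\chi)=\epsilon(-\alpha')\,\tau(\chi),
\]
where $\chi(2^{a-3}+1)=e(\alpha'/8)$. So the proof reduces to relating $\alpha'$ (mod $8$) to $\alpha$.

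The key step is a $2$-adic congruence expressing $2^{a-3}+1$ as a power of $5$ modulo $2^a$. We have $5^{2^{k}}\equiv 1+2^{k+2}\pmod{2^{k+3}}$. Using the more precise form $5^{2^{k}}=1+2^{k+2}u_k$ with $u_k$ odd, we need $u_k$ modulo $8$. Write $5=1+4$, so $5^{2^k}=\exp(2^k\log(1+4))$ $2$-adically. From $\log 5 = 4-8+\tfrac{64}{3}-\cdots$ we get $\log 5\equiv 4\cdot(\text{unit }w)$, and then $5^{2^k}=1+2^{k+2}w+O(2^{2k+3})$. Taking $k=a-5$, the error term $2^{2k+3}=2^{2a-7}$ is divisible by $2^a$ once $a\ge 7$; this is where $a>6$ is used. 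Hence
\[
5^{2^{a-5}w^{-1}}\equiv 1+2^{a-3}\pmod{2^a},
\]
with $w^{-1}$ computed modulo $8$. This gives $\chi(2^{a-3}+1)=e(\alpha\, w^{-1}2^{a-5}/2^{a-2})=e(\alpha w^{-1}/8)$, so $\alpha'\equiv w^{-1}\alpha\pmod 8$.

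It remains to compute the unit $w$ modulo $8$ and check that $\epsilon(-w^{-1})=1$ for every character $\epsilon$ mod $8$, i.e.\ that $-w^{-1}\equiv 1\pmod 8$, i.e.\ $w\equiv -1\equiv 7\pmod 8$. This is the main obstacle: the delicate part is getting $u_k$ exactly right modulo $8$, not just modulo $2$. We plan to do it directly rather than via the logarithm. Check $5^{2}=25=1+8\cdot 3$, so $u_1=3$. Squaring, $(1+2^{k+2}u)^2=1+2^{k+3}(u+2^{k+1}u^2)$, so $u_{k+1}=u_k+2^{k+1}u_k^2$. Thus $u_2=3+36\equiv 7\pmod 8$, and for $k\ge 2$ we have $2^{k+1}\equiv 0\pmod 8$, so $u_k\equiv 7\pmod 8$ for all $k\ge 2$. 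Hence $5^{2^{a-5}}\equiv 1+7\cdot 2^{a-3}\pmod{2^a}$ for $a-5\ge 2$, which is again guaranteed by $a\ge 7$.

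Then $\chi(1+7\cdot 2^{a-3})=e(\alpha/8)$, and by additivity (as in Lemma~\ref{lem:gauss-sum}, $n\mapsto\chi(n2^{a-3}+1)$ is an additive character since $2(a-3)\ge a$) we get $e(7\alpha'/8)=e(\alpha/8)$. So $\alpha'\equiv -\alpha\pmod 8$, and therefore $\epsilon(-\alpha')=\epsilon(\alpha)$, completing the proof.
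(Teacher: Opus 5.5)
Your proof is correct and takes essentially the paper's route: you apply Lemma~\ref{lem:gauss-sum} with $b=3$ and $\ell=a-3$, then use a $2$-adic congruence to show $\chi(2^{a-3}+1)=e(-\alpha/8)$. The only difference is cosmetic. The paper writes $2^{a-3}+1\equiv 5^{7\cdot 2^{a-5}} \pmod{2^a}$ directly via the binomial theorem, whereas you compute $5^{2^{a-5}}\equiv 1+7\cdot 2^{a-3}$ and invert the $7$ using additivity of $n\mapsto\chi(1+n2^{a-3})$; the $2$-adic logarithm paragraph is unnecessary.
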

\begin{proof}
From Lemma \ref{lem:gauss-sum}, we have
    \[
        \tau(\epsilon \chi) = \epsilon(-\alpha') \, \tau(\chi),
    \]
    where $\chi(2^{a-3} \!+\! 1) = e(\alpha'/8)$. For $\ell \ge 4$, applying
    the binomial theorem to $(2^2 + 1)^{7 \cdot 2^{\ell-2}}$, we find the
    congruence $5^{7 \cdot 2^{\ell-2}} \equiv 2^\ell+ 1 \mod{2^{\ell+3}}$.
    As the multiplicative order of $5$ (mod $2^{\ell+3}$) is $2^{\ell+ 1}$, we find
    that the discrete logarithm of $2^\ell+ 1$ with respect to $5$ (mod $2^a$)
    must be congruent to $7 \cdot 2^{\ell-2}$ (mod $2^{\ell+1}$)
    for any $a \ge \ell + 3$. This means that
    \[
        \chi(2^\ell \!+ \!1) = \chi(5)^{7\cdot 2^{\ell-2} + k2^{\ell+1}}
    \]
    for some integer $k$. That is,
    \[
        \chi(2^\ell \!+ \!1) = e\!\left(\frac{7\alpha + 8k\alpha}{2^{a-\ell}}\right).
    \]
    When $\ell = a-3$, this is simply
    \[
        \chi(2^{a-3} \!+ \!1) = e\!\left(\frac{7\alpha}{8}\right),
    \]
    and now Lemma $\ref{lem:gauss-sum}$ finishes the proof.
\end{proof}

\subsection{An auxiliary sum}

We will evaluate $S(\chi)$ by relating it to the sum
\[
    S_2(\psi, r) = \sum_{n=1}^{q_2} \psi(n^2) \, e\!\left(\frac{rn^2}{q_2}\right),
\]
where $\psi$ is a primitive character (mod $q_2$) and $r$ is coprime to $q_2$.
Via a standard application of the Chinese Remainder Theorem, we can factor
$S_2(\psi, r)$ as
\begin{equation}\label{eq:S2-CRT}
    S_2(\psi, r) = \prod_p \psi_p (q_2/p^{e_p})^2 S_2(\psi_p, rq_2/p^{e_p}),
\end{equation}
where $q_2 = \prod_p p^{e_p}$ and $\psi = \prod_p \psi_p$. As one might
already guess from examining the formulae in Proposition \ref{prop:S1},
the relationship
between $S(\chi)$ and $S_2(\psi, r)$ comes from the fact that when $\chi$
is a primitive even character (mod $q$) and $q$ is even and there is a primitive
character $\psi$ (mod $2q$) such that $\psi^2 = \chi$, we will have
$S(\chi) = S_2(\psi, -1)$
for this character $\psi$ (mod $q_2$), where $q_2= 2q$. If $q$ is odd or $\chi$ does
not have a square root then we will see that it is easy to show that $S(\chi)
= 0$, so anytime $S(\chi) \ne 0$ we will be able to evaluate it this way.

\smallskip

It is straightforward to rewrite $S_2$ as a sum of Gauss sums. A sum over the
quadratic characters (mod $q_2$) will detect squares, and count them with the
correct multiplicity, and thus
\[
\begin{split}
    S_2(\psi, r)
    &= \sum_{n=1}^{q_2} \left(\sum_{\xi^2 = \chi_0} \xi(n) \right)
        \psi(n) \, e\!\left(\frac{rn}{q_2}\right) = \sum_{\xi^2 = \chi_0} \sum_{n=1}^{q_2} \xi\psi(n) \, e\!\left(\frac{rn}{q_2}\right) = \sum_{\xi^2 = \chi_0} \overline{\xi \psi}(r) \,\tau(\xi \psi).
\end{split}
\]
This formula does not seem particularly elucidating, however, and we find
it better to use the factorization and do this one prime at a time. Determining
when the sum vanishes still comes down to studying Gauss sums, but only for
prime power moduli.

\subsubsection{Even prime powers}
When $q = 2^a$ is an even prime power (and $a$ is large enough), the quadratic
characters are precisely the characters (mod 8). Studying Gauss sums in this case will
lead us to the following evaluation.

\begin{proposition}\label{prop:S2-evenprime}
    Let $a, r \in \mathbb{Z}^+$ with $r$ odd. Let $\psi_2$ be a primitive character \textup{(mod }$2^a$\textup{)} and $\alpha$ be an integer such that $
   \psi_2(5) = e(\alpha/2^{a-2})$.
    Then we have the following:
    \begin{itemize}
        \item[(a)] If $(i)$ $a < 5$, or $(ii)$ $a = 5$ and $r \equiv \alpha \pmod 4$, or $(iii)$ $a = 6$ and $r \equiv 5\alpha \pmod 8$, then $S_2(\psi_2, r) = 2^{a-1} e(r/2^a).$
    
        \item[(b)] If $a > 6$ and $r \equiv 
    \alpha \pmod 8$, then $S_2(\psi_2,r) = 4 \, \psibar_2(r) \, \tau(\psi_2)$.
    \end{itemize}
    In all other cases, $S_2(\psi_2, r) = 0$.
    %\Bnote{I wonder if there is a clean evaluation of the Gauss sum here when $a > 6$.} {\color{red} M: should this be a lemma?}
\end{proposition}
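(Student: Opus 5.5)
The plan is to use the expansion derived just above, specialized to $q_2=2^a$:
\[
S_2(\psi_2,r)=\sum_{\xi^2=\chi_0}\overline{\xi\psi_2}(r)\,\tau(\xi\psi_2),
\]
where $\xi$ runs over the quadratic characters (mod $2^a$). For $a\ge 3$ these are exactly the four characters (mod $8$): the trivial character $\chi_0$, $\chi_{-4}$, $\chi_8$ and $\chi_{-8}$. For $a\le 2$ there are fewer of them, and I would treat those tiny cases by direct computation of $S_2(\psi_2,r)=\sum_{n}\psi_2(n^2)e(rn^2/2^a)$. The primitive characters there are essentially $\chi_{-4}$, whose square is principal, so $S_2=\sum_{n\text{ odd}} e(r n^2/4)=2e(r/4)$, matching $2^{a-1}e(r/2^a)$.

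For large $a$ (part (b), $a>6$) I would apply Lemma \ref{lem:gauss-sum-evenprime} with $\chi=\psi_2$ and $\epsilon=\xi$. This gives $\tau(\xi\psi_2)=\xi(\alpha)\tau(\psi_2)$, and each $\xi\psi_2$ is still primitive (mod $2^a$). Substituting,
\[
S_2(\psi_2,r)=\overline{\psi_2}(r)\,\tau(\psi_2)\sum_{\xi \bmod 8}\xi(\alpha)\overline{\xi}(r),
\]
where $\alpha$ is odd because $\psi_2$ is primitive. The inner sum is the orthogonality relation for characters (mod $8$). It equals $4$ if $r\equiv\alpha\pmod 8$ and $0$ otherwise, which is exactly (b) together with the vanishing statement.

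For the small moduli $3\le a\le 6$, Lemma \ref{lem:gauss-sum-evenprime} is unavailable, and Lemma \ref{lem:gauss-sum} only applies when $b\le a/2$ and $a/2\le \ell\le a-b$.
\begin{itemize}
\item When $a=6$ and $b=3$, we may take $\ell=3$. Then $\tau(\xi\psi_2)=\xi(-\alpha')\tau(\psi_2)$ with $\psi_2(9)=e(\alpha'/8)$. Writing $9\equiv 5^{k}\pmod{64}$, one finds $k\equiv 10\pmod{16}$, so $\alpha'\equiv 10\alpha\cdot(\text{unit})\ldots$. This needs care: I would instead compute $\psi_2(9)=\psi_2(5)^{k}$ directly to get $\alpha'\equiv -5\alpha$ up to the required precision (mod $8$). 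The same orthogonality then gives vanishing unless $r\equiv 5\alpha\pmod 8$, and in that case $4\overline{\psi_2}(r)\tau(\psi_2)$.
\item To reach the stated form $2^{a-1}e(r/2^a)$, I would evaluate $\tau(\psi_2)$ by the stationary-phase method inside Lemma \ref{lem:gauss-sum}. The final displayed sum there, with $\ell=a/2$, has a single residue class $m\equiv-\alpha \pmod{2^{a-\ell}}$ contributing. This lets one identify $\overline{\psi_2}(r)\tau(\psi_2)$ with $2^{a-3}e(r/2^a)$ when $r\equiv 5\alpha\pmod 8$.
\item For $a=5$ and $a\in\{3,4\}$, the conditions $b\le a/2$ fail for the characters (mod $8$). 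Here I would argue directly from the definition. The map $n\mapsto n^2$ on $(\mathbb Z/2^a)^\times$ is $4$-to-$1$ onto the classes $\equiv1\pmod 8$, so
\[
S_2(\psi_2,r)=4\sum_{j \bmod 2^{a-3}}\psi_2(1+8j)\,e\!\left(\frac{r(1+8j)}{2^a}\right).
\]
For $a\le 4$ this is a sum of at most two terms, and since $\psi_2^2$ is trivial or nearly so, it equals $2^{a-1}e(r/2^a)$ for all $r$. For $a=5$ it is a length-$4$ sum, $4e(r/32)\sum_{j \bmod 4}\psi_2(9)^j e(rj/4)$. Since $9=5^2\cdot(\text{unit}\equiv1\bmod 16)\ldots$, I would compute $\psi_2(9)=e(2\alpha'' /8)$ explicitly, with $\alpha''$ determined by $\alpha$ (mod $4$). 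The geometric sum is then $4$ or $0$ according to whether $r\equiv\alpha\pmod 4$, giving $16e(r/32)=2^{a-1}e(r/2^a)$.
\end{itemize}
The same reparametrization, summing over $1+8j$, also handles $a=6$ uniformly. It is a length-$8$ geometric sum in $\psi_2(9)^j e(rj/8)$, and that is probably the cleanest route. It is cleaner than the Gauss-sum route, and it avoids evaluating $\tau(\psi_2)$.

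The main obstacle is the exact discrete-logarithm bookkeeping: expressing $\psi_2(9)$, and more generally $\psi_2(1+8j)$, in terms of $\alpha$. Concretely, $9\equiv 5^{k}\pmod{2^a}$, and I need $k$ modulo the needed power of $2$, for instance $k\equiv 2\cdot(\text{odd})$ with the odd factor making the congruences come out as $r\equiv\alpha$ (mod $4$) for $a=5$ and $r\equiv5\alpha$ (mod $8$) for $a=6$. I would compute $\log_5 9$ modulo $16$ by hand; it is $\equiv 10$, i.e. $5^{10}\equiv 9\pmod{64}$. From this, $\psi_2(9)=e(10\alpha/2^{a-2})$, and the geometric sums give precisely the stated congruences. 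For $a=6$ the condition is $10\alpha+r\cdot 2\equiv0 \pmod{16}$, i.e. $r\equiv -5\alpha\equiv 3\alpha\pmod 8$. I expect this sign convention (whether the condition reads $5\alpha$ or $3\alpha$, depending on using $e(-\cdot)$ versus $e(+\cdot)$ and on $\log_5 9$ modulo $16$) to be the delicate point, and I would double-check it numerically against Example \ref{ex2}.
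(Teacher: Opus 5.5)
For $a>6$ your argument is the paper's: Lemma~\ref{lem:gauss-sum-evenprime} followed by orthogonality of the characters mod $8$. For $a\le 6$ the paper only appeals to an unreproduced computer check. Your reduction through the $4$-to-$1$ squaring map, giving $S_2(\psi_2,r)=4e(r/2^a)\sum_{j \bmod 2^{a-3}}\psi_2(1+8j)\,e(rj/2^{a-3})$, is a genuinely different and self-contained route, and it is the right idea.

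As written, however, it does not prove (a)(ii)--(iii) or the corresponding vanishing claims, because the one arithmetic input is wrong. You have $5^{10}\equiv 57\pmod{64}$, not $9$. In fact $5^6=15625=244\cdot 64+9$, so $\log_5 9\equiv 6\pmod{16}$, and hence $\equiv 6\pmod 8$ when working mod $32$. With your value $10$ you get $r\equiv 3\alpha\pmod 8$ for $a=6$. Applied consistently, it would also give $r\equiv-\alpha\pmod 4$ for $a=5$. Both contradict the statement, and this is an arithmetic error, not a sign convention.

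The source of the error shows in your $a=5$ bullet. In $9\equiv 25\cdot 17\pmod{32}$, the factor $17\equiv 1\pmod{16}$ cannot be dropped, because primitivity forces $\psi_2(17)=-1$. Your first bullet's $\alpha'\equiv-5\alpha\pmod 8$ is actually correct, and it is inconsistent with $k\equiv 10$.

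To repair the argument, first justify the step you used without comment. By the binomial theorem $(1+8)^j\equiv 1+8j\pmod{64}$, so for $3\le a\le 6$ you have $\psi_2(1+8j)=\psi_2(9)^j=e(6\alpha j/2^{a-2})$. This gives
\[
S_2(\psi_2,r)=4\,e\!\left(\frac{r}{2^a}\right)\sum_{j \bmod 2^{a-3}} e\!\left(\frac{(3\alpha+r)\,j}{2^{a-3}}\right),
\]
which equals $2^{a-1}e(r/2^a)$ if $r\equiv -3\alpha\pmod{2^{a-3}}$ and $0$ otherwise. The condition is vacuous for $a=3$ and automatic for $a=4$, since $r$ and $\alpha$ are odd. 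For $a=5$ it reads $r\equiv\alpha\pmod 4$, and for $a=6$ it reads $r\equiv 5\alpha\pmod 8$, exactly as in (a). This also makes your second bullet, evaluating $\tau(\psi_2)$ via Lemma~\ref{lem:gauss-sum}, unnecessary; that step was unsubstantiated anyway.
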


\begin{proof}
    There are only a small number of cases where $a \le 6$, so this can be done using a small computer computation which we do not include. Otherwise, as described above, this sum is
\[
\begin{split}
S_2(\psi_2,r) 
   =& \sum_{\xi \mod 8}\sum_{n=1}^{2^a} \xi \psi_2(n)\, e(rn/2^a) = \sum_{\xi \mod 8} \overline{\xi\psi_2}(r) \, \tau(\xi \psi_2).
\end{split}
\]
It now follows from Lemma \ref{lem:gauss-sum-evenprime} that
\[
    S_2(\psi_2, r) = \psibar_2(r) \, \tau(\psi_2) \sum_{\xi \mod 8} \overline{\xi}(r) \, \xi(
    \alpha).
\]
By the orthogonality of characters \textup{(mod $8$)}, this equals $4 \, \psibar_2(r) \, \tau(\psi_2)$ if $r \equiv \alpha \pmod 8$ and $0$ otherwise.
This completes the proof of the proposition.
\end{proof}

\subsubsection{Odd prime powers} \label{oddmod}
For any odd prime power, there is exactly one quadratic character, and thus the sum
for characters to such moduli is just
\[
    S_2(\psi_p, r) = \psibar_p(r)\big(\tau(\psi_p) + \epsilon_p(r) \, \tau(\epsilon_p \psi_p)\big).
\]
The next proposition says that this sum does not vanish when the modulus of the character
is a prime, and describes when it does vanish for higher powers.

\begin{proposition}\label{prop:S2-oddprime}
    Given $a\in \mathbb{Z}^+$, let $\psi_p$ be a primitive character modulo $p^a$, for a prime $p > 2$ and $(r,p)=1$. If $a = 1$, then $S_2(\psi_p, r) \ne 0$. Otherwise, let $\alpha$ be an integer such that $\psi_p(p + 1) = e(\alpha/p^{a-1})$. Then
    \[
        S_2(\psi_p, r) = \left(1 + \qr{-\alpha r}{p}\right)\psibar_p(r) \, \tau(\psi_p).
    \]
\end{proposition}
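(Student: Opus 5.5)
We begin from the formula already recorded at the start of \S\ref{oddmod},
\[
S_2(\psi_p, r) = \psibar_p(r)\big(\tau(\psi_p) + \epsilon_p(r)\, \tau(\epsilon_p \psi_p)\big),
\]
which comes from detecting squares with the two quadratic characters $\chi_0$ and $\epsilon_p$ modulo $p^a$. We then treat the cases $a>1$ and $a=1$ separately.

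\textbf{The case $a>1$.} We apply Lemma \ref{lem:gauss-sum-oddprime} with $\chi=\psi_p$, which is primitive modulo $p^a$, and with the character modulo $p$ taken to be $\epsilon_p$. The lemma gives $\tau(\epsilon_p\psi_p)=\epsilon_p(-\alpha)\,\tau(\psi_p)$. Substituting, we obtain
\[
S_2(\psi_p,r)=\psibar_p(r)\,\tau(\psi_p)\big(1+\epsilon_p(-\alpha)\epsilon_p(r)\big),
\]
and $\epsilon_p(-\alpha r)=\qr{-\alpha r}{p}$.

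We should check one point: that $\epsilon_p\psi_p$ is still primitive modulo $p^a$, or at least that the Gauss sum is the full sum modulo $p^a$ used in the detection formula. The lemma is stated for $\tau(\psi\chi)$ as a sum over residues modulo $p^a$, so this causes no problem. Also, $\alpha$ is coprime to $p$ because $\psi_p$ is primitive: otherwise $\psi_p(p+1)$ would be a $p^{a-2}$-th root of unity, $\psi_p$ would be trivial on $1+p^{a-1}\mathbb{Z}$, and it would not be primitive. Hence the Legendre symbol equals $\pm1$, and the factor is $0$ or $2$.

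\textbf{The case $a=1$.} This is the main obstacle: we must show $\tau(\psi_p)+\epsilon_p(r)\tau(\epsilon_p\psi_p)\neq0$. If $\psi_p=\epsilon_p$, the second term is $\epsilon_p(r)\tau(\chi_0)=-\epsilon_p(r)$, which has absolute value $1$. The first term has absolute value $\sqrt p\ge\sqrt3>1$, so the sum is nonzero.

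Otherwise both $\psi_p$ and $\epsilon_p\psi_p$ are primitive modulo $p$, and both Gauss sums have absolute value $\sqrt p$. We use the Jacobi sum identity $\tau(\psi_p)\tau(\epsilon_p)=J(\psi_p,\epsilon_p)\,\tau(\epsilon_p\psi_p)$, valid because $\epsilon_p\psi_p$ is nontrivial. This reduces the claim to showing
\[
J(\psi_p,\epsilon_p)\neq -\epsilon_p(r)\,\tau(\epsilon_p).
\]
Both sides have absolute value $\sqrt p$, so a finer argument is needed. We work in $\mathbb{Z}[\zeta_{p-1},\zeta_p]$ and compare $p$-adic valuations or residues modulo the prime above $p$:
\begin{itemize}
\item $J(\psi_p,\epsilon_p)$ lies in $\mathbb{Z}[\zeta_{p-1}]$.
\item $\tau(\epsilon_p)$ is $\sqrt{\pm p}$ and lies in $\mathbb{Z}[\zeta_{p-1}]$ only when $\sqrt{\pm p}\in\mathbb{Q}(\zeta_{p-1})$, which is impossible because $p$ ramifies in $\mathbb{Q}(\sqrt{\pm p})$ but is unramified in $\mathbb{Q}(\zeta_{p-1})$.
\end{itemize}
Since $\epsilon_p(r)=\pm1$, the required inequality follows immediately. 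This completes the plan.
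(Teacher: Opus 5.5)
Your proposal is correct and matches the paper's proof: the case $a>1$ applies Lemma~\ref{lem:gauss-sum-oddprime} with $\epsilon_p$ as the character modulo $p$, and the case $a=1$ uses the same Jacobi-sum reduction $\tau(\epsilon_p\psi_p)/\tau(\psi_p)=\tau(\epsilon_p)/J(\psi_p,\epsilon_p)$ together with $J(\psi_p,\epsilon_p)\in\mathbb{Q}(\zeta_{p-1})$ and $\sqrt{\pm p}\notin\mathbb{Q}(\zeta_{p-1})$. Your ramification justification of that last fact and your check that $p\nmid\alpha$ are useful details the paper leaves implicit; the paper also gives an alternative proof of the $a=1$ case via Evans' generalization of the Chowla--Mordell theorem, while your route is its ``direct'' proof.
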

%\Mnote{We have two proofs for $S_2(\psi, r)\neq 0$ does not vanish. One direct proof is using ... One indirect proof is using a ..}
\begin{proof}
    We consider
    \begin{equation}\label{eq:prop5}
        S_2(\psi_p, r) = \psibar_p(r) \, \big(\tau(\psi_p) + \epsilon_p(r) \, \tau(\epsilon_p \psi_p)\big).
    \end{equation}
       In the case $a=1$, both $\psi_p$ and $\epsilon_p$ are primitive characters (mod $p$).
    In this case, we have two proofs that $\tau(\psi_p)\ne \pm \tau(\epsilon_p \psi_p)$ which implies that $S_2(\psi_p,r) \ne 0$. If $\psi_p=\epsilon_p$, then $|\tau(\psi_p)|=\sqrt{p}$ while $\tau(\psi_p \epsilon_p) = -1$, so it is clear that $S_2(\psi_p,r) \ne 0$. Therefore, we may assume that $\psi_p\ne\epsilon_p$.
    
   \smallskip 
    
    One short and indirect proof relies on a beautiful generalization of the Chowla--Mordell theorem due to Evans. The Chowla--Mordell theorem
    states that if $\chi$ is a primitive character (mod $p$), then  $\tau(\chi)/\sqrt{p}$ is a root of unity if and only if $\chi$ is quadratic. If $\chi_1$ and $\chi_2$ are primitive characters (mod $p$), then Evans
    \cite[Theorem 1]{evans} proved that $\tau(\chi_1)/\tau(\chi_2)$ is a root of unity if and only if $\chi_1=\chi_2$. If $\psi_p \ne \epsilon_p$, then $\psi_p$ and $\epsilon_p \psi_p$ are distinct primitive characters (mod $p$). Therefore, Evans' theorem implies that $\tau(\psi_p)/\tau(\epsilon_p \psi_p)\ne \pm 1$. Hence $S_2(\psi_p,r) \ne 0$.
    
   % Applying Theorem \ref{th:evans} with $\chi=\epsilon\psi$, we have $\tau(\psi)/\tau(\epsilon \psi)\neq \pm 1$ so that $|S_2(\psi, r)| = |\tau(\psi)+ \epsilon(r) \tau(\epsilon \psi)| \neq 0$.

\smallskip    
    
    We also provide a direct proof in the case $a=1$ using properties of Jacobi sums and cyclotomic fields. Again, since $\psi_p \ne \epsilon_p$, we know that $\psi_p$ and $\epsilon_p \psi_p$ are distinct primitive characters (mod $p$). We can express the Gauss sums $\tau(\psi_p)$ and $\tau(\epsilon_p \psi_p)$ in terms of the Jacobi sum
    \begin{equation}\label{js}
                J(\psi_p, \epsilon_p) = \sum_{n=1}^p \psi_p(n) \, \epsilon_p(1\!-\! n) = \frac{\tau(\psi_p) \, \tau(\epsilon_p)}{\tau(\epsilon_p \psi_p)}.
    \end{equation}
    Evaluating the Gauss sum for the quadratic character and
    rearranging, we find that
    \[
        \frac{\tau(\epsilon_p \psi_p)}{\tau(\psi_p)} = \frac{\sqrt{\pm p}}{J(\psi_p,\epsilon_p)}.
    \]
    The ratio on the right-hand side cannot be $\pm 1$ because the Jacobi sum is an element of the
    cyclotomic field $\Q(\zeta_{p-1})$ and the elements $\sqrt{\pm p}$ are not. Hence $\tau(\psi_p)/\tau(\epsilon_p \psi_p)\ne \pm 1$ and $S_2(\psi_p,r) \ne 0$.
    
    \smallskip

    In the case where $a > 1$, we use Lemma \ref{lem:gauss-sum-oddprime} to simplify $\tau(\epsilon_p \psi_p)$, giving
    \[
        S_2(\psi_p, r) = \psibar_p(r) \, \big(\tau(\psi_p) + \epsilon_p(-\alpha r)\, \tau(\psi_p)\big).
    \]
    This completes the proof of the proposition.
\end{proof}

\begin{remark}\label{remark:jacobi}% {\color{red} move this to the last section or keep it here?}
We can elaborate on the relationship between $S_2(\psi_p, r)$ and Jacobi sums. Consider
\begin{equation}\label{eq:remark}
S_{\pm} = \frac{\psi_p(4)}{\tau(\epsilon_p)} \cdot \frac{ ( \tau(\psi_p)\pm \tau(\epsilon_p \psi_p))^2}{\tau(\psi_p^2)}.
\end{equation}
By \cite[Theorem 2.1.4]{bew}, if $p>2$, for nontrivial, non-quadratic $\psi_p$ and
quadratic $\epsilon_p$ (mod $p$), we have
\[
    \psi_p(4) \, J(\psi_p,\psi_p)=J(\psi_p,\epsilon_p).
\]
Applying this identity and \eqref{js} in a short calculation, we obtain that
\begin{equation}\label{eq:S2jacobi}
\begin{split}
S_{\pm}
&= \frac{\psi_p(4)}{\tau(\epsilon_p)} \cdot \frac{\tau(\psi_p)^2}{\tau(\psi_p^2)}\Big(1 \pm \frac{\tau(\epsilon_p\psi_p)}{\tau(\psi_p)}\Big)^2
=
\frac{\psi_p(4)J(\psi_p,\psi_p)}{\tau(\epsilon_p)}
\Big(1\pm \frac{\tau(\epsilon_p)}{J(\psi_p,\epsilon_p)}\Big)^2\\
&=
\frac{J(\psi_p,\epsilon_p)}{\tau(\epsilon_p)}
\cdot
\Big(1\pm \frac{\tau(\epsilon_p)}{J(\psi_p,\epsilon_p)}\Big)^2
=\frac{J(\psi_p,\epsilon_p)}{\tau(\epsilon_p)} \pm 2 + \frac{\tau(\epsilon_p)}{J(\psi_p,\epsilon_p)}
= 2 \, \mathrm{Re}\Big( \frac{\tau(\psi_p)}{\tau(\epsilon_p \psi_p)}\Big) \pm 2.
\end{split}
\end{equation}
By work of Katz \cite[Chapter 9]{Katz}, this ratio of Gauss sums becomes uniformly distributed on the unit circle as $p\to\infty$, where $\psi_p$ runs over the non-principal, non-quadratic characters modulo $p$, which in turn suggests something about the distribution of the size of $S_2(\psi_p, r)$. We will return to this in Section \ref{section:distribution}.

%We expect that this ratio of Jacobi and Gauss sums becomes uniformly distributed on the unit circle for large $p$ as $\psi$ varies {\color{red}(see \cite{xi}, for example)}, which in turn suggests something about the distribution of the size of $S_2(\psi_p, r)$. We will return to this in Section \ref{section:distribution}.
\end{remark}

For reference, we summarize what the results of Proposition \ref{prop:S2-evenprime} and Proposition \ref{prop:S2-oddprime} imply about nonvanishing of $S_2(\psi,r)$ once we apply the Chinese Remainder Theorem factorization in \eqref{eq:S2-CRT}.

\begin{proposition}\label{prop:S2}
Let $\psi = \prod_p \psi_p$ be a primitive character \textup{(mod }$q_2$\textup{)}, where $q_2 = \prod_p p^{e_p}$ and each factor $\psi_p$ is a primitive character \textup{(mod }$p^{e_p}$\textup{)}, and let $r$ be an integer with $(r,q_2)=1$. Set $\beta_2$ to be an integer such that
$\psi_2(5) = e(\beta_2/2^{e_2 - 2})$, and for each odd prime $p$
set $\beta_p$ to be an integer such that
$\psi_p(p + 1) = e(\beta_p/p^{e_p - 1})$. Then $S_2(\psi, r) \ne 0$ if and
only if
\begin{itemize}
    \item[(a)] $(i)$ $e_2 <5$, or $(ii)$ $e_2 = 5$ and $\beta_2 \equiv rq_2/2^5 \pmod 4$, or $(iii)$ 
        $e_2 = 6$ and $\beta_2 \equiv 5rq_2/2^6 \pmod 8$, 
        or $(iv)$ $e_2 > 6$ and 
        $\beta_2 \equiv rq_2/2^{e_2} \pmod 8$; and
    \item[(b)] for each odd prime $p$ such that $p^2$ divides $q_2$, the residue $-\beta_p rq_2/p^{e_p}$
        is a square modulo $p$.
\end{itemize}
\end{proposition}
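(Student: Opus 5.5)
The plan is to combine the Chinese Remainder Theorem factorization \eqref{eq:S2-CRT} with the local evaluations in Propositions \ref{prop:S2-evenprime} and \ref{prop:S2-oddprime}. By \eqref{eq:S2-CRT},
\[
S_2(\psi,r)=\prod_p \psi_p(q_2/p^{e_p})^2\, S_2(\psi_p,\, r q_2/p^{e_p}),
\]
and each factor $\psi_p(q_2/p^{e_p})^2$ has modulus one, since $q_2/p^{e_p}$ is coprime to $p$. Hence $S_2(\psi,r)\ne 0$ exactly when every local factor $S_2(\psi_p, r_p)\ne 0$, where $r_p := r q_2/p^{e_p}$. The hypothesis $(r,q_2)=1$ makes each $r_p$ coprime to $p$; in particular $r_2$ is odd, so both local propositions apply. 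It then remains to translate the local nonvanishing conditions into (a) and (b).

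For $p=2$, apply Proposition \ref{prop:S2-evenprime} with $a=e_2$, $\alpha=\beta_2$ and $r$ replaced by $r_2=rq_2/2^{e_2}$. The nonvanishing cases listed there are the only ones in which the local sum is nonzero. In case (a) the value $2^{a-1}e(r/2^a)$ is visibly nonzero. In case (b) the value $4\,\overline{\psi}_2(r)\,\tau(\psi_2)$ is nonzero because $|\tau(\psi_2)|=2^{a/2}$ for primitive $\psi_2$.

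There is one small bookkeeping point. Proposition \ref{prop:S2-evenprime} states its $a=5$ and $a=6$ conditions as $r\equiv\alpha$ and $r\equiv 5\alpha$, whereas (a)(ii),(iii) are written as $\beta_2\equiv r_2 \pmod 4$ and $\beta_2\equiv 5r_2\pmod 8$. The first is trivially symmetric. For the second, multiply $r_2\equiv 5\beta_2$ by $5$ and use $25\equiv1\pmod 8$. The case $e_2=0$ (no factor at $2$) is covered by (i) and imposes nothing. For $e_2\in\{1,2\}$ the proposition applies as stated, given that $\psi_2$ is primitive there.

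For odd $p$, Proposition \ref{prop:S2-oddprime} shows that $e_p=1$ always gives a nonzero factor, so no condition arises. For $e_p\ge2$ we have
\[
S_2(\psi_p,r_p)=\Bigl(1+\qr{-\beta_p r_p}{p}\Bigr)\overline{\psi}_p(r_p)\,\tau(\psi_p),
\]
with $|\tau(\psi_p)|=p^{e_p/2}\ne0$. Because $p\nmid \beta_p r_p$, this factor is nonzero if and only if $-\beta_p r_p$ is a quadratic residue mod $p$, which is exactly condition (b). Here $p\nmid\beta_p$ holds because primitivity forces $\psi_p(p+1)$ to have exact order $p^{e_p-1}$, as $p+1$ generates the subgroup $1+p\mathbb{Z}$ mod $p^{e_p}$.

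Since every local factor's nonvanishing has been characterized, the product is nonzero if and only if (a) and (b) both hold. I expect the main (small) obstacle to be the careful matching of the congruence conditions at $2$, including the degenerate small-$e_2$ cases; everything else is direct substitution.
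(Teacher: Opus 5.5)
Your proposal is correct and follows the paper's route: the paper obtains this proposition by combining the factorization \eqref{eq:S2-CRT} with the local evaluations in Propositions \ref{prop:S2-evenprime} and \ref{prop:S2-oddprime}. Your translation of the congruences (e.g.\ the $e_2=6$ case via $25\equiv 1 \pmod 8$, and $p\nmid\beta_p$ by primitivity) supplies bookkeeping the paper leaves implicit. Two harmless nits: $e_2=1$ is vacuous, since no primitive character exists modulo $2$; and $r_2$ may be negative, which does not matter because $S_2(\psi_2,r)$ depends only on $r$ modulo $2^{e_2}$.
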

When the sum is nonzero, our proof will give an explicit evaluation in terms of
Gauss sums. It is tedious to write out all of the cases when $e_2$ is small,
but when $e_2 > 6$ we have
\begin{equation} \label{eq:S2-evaluation}
    S_2(\psi, r) = 2^{\nu(q_2) + 1}
        \prod_{p \mid q_2} \psi_p\!\left(\frac{q_2\overline{r}}{p^{e_p}}\right)
        \prod_{p^2 \mid q_2} \tau(\psi_p)
        \prod_{ p \exactdivide q_2}\left(\tau(\psi_p) + \epsilon_p\!\left(\frac{rq_2}{p}\right)\tau(\epsilon_p\psi_p)\right),
\end{equation}
where $\epsilon_p$ is the quadratic character (mod $p$) and $\nu(q_2)$ is the number of primes $p$ dividing $q_2$ such that $p^2$ divides
$q_2$.

\subsection{Evaluation of \texorpdfstring{$S(\chi)$}{S(chi)}} Let $\chi$  \textup{(mod }$q$\textup{)} be a primitive character. Recall that we have already seen that $S(\chi)=0$ if $q$ is odd or $\chi$ is odd. 

\begin{lemma} \label{square}
Let $q\in \mathbb{N}$ be even and let $\chi$  \textup{(mod }$q$\textup{)} be a primitive character. Then
\[
S(\chi)  = \sum_{\substack{\psi \textup{ (mod }2q) \\ \psi^2= \chi}}
 \overline{\psi}(-1) \, \tau(\psi) .
\]
In particular, if $S(\chi)\ne 0$, then there is a primitive character $\psi$  \textup{(mod }$2q$\textup{)} such that $\chi=\psi^2$. 
\end{lemma}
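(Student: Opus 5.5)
The plan is to expand the additive character $e(-n^2/2q)$ in multiplicative characters modulo $2q$. Orthogonality then collapses the sum over $n$ to the square roots of $\chi$. The primitivity claim will follow from the standard vanishing of Gauss sums of imprimitive characters.

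\emph{Step 1: reduce to units modulo $2q$.} Since $q$ is even, $2q$ and $q$ have the same prime divisors. Hence $\chi(n)\neq 0$ exactly when $(n,2q)=1$, and we may view $\chi$ as a (generally imprimitive) character modulo $2q$, still written $\chi$. Then $S(\chi)=\sum_{n \bmod 2q,\,(n,2q)=1}\chi(n)\,e(-n^2/2q)$.

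\emph{Step 2: expand the exponential.} For every character $\psi$ modulo $2q$, primitive or not, set $\tau(\psi)=\sum_{m=1}^{2q}\psi(m)e(m/2q)$. By orthogonality, for $(x,2q)=1$,
\[
\frac{1}{\varphi(2q)}\sum_{\psi \bmod 2q}\overline{\psi}(x)\,\tau(\psi)=\sum_{(m,2q)=1} e\!\left(\frac{m}{2q}\right)\frac{1}{\varphi(2q)}\sum_{\psi}\psi(m)\overline{\psi}(x)=e\!\left(\frac{x}{2q}\right).
\]
Apply this with $x=-n^2$ and use $\overline{\psi}(-n^2)=\overline{\psi}(-1)\,\overline{\psi}^2(n)$. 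Interchanging the sums gives
\[
S(\chi)=\frac{1}{\varphi(2q)}\sum_{\psi \bmod 2q}\overline{\psi}(-1)\,\tau(\psi)\sum_{(n,2q)=1}\chi(n)\,\overline{\psi}^{\,2}(n).
\]
The inner sum equals $\varphi(2q)$ if $\psi^2=\chi$ and $0$ otherwise. This yields the displayed identity, with $\psi$ running over all characters modulo $2q$ whose square is $\chi$.

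\emph{Step 3: only primitive $\psi$ contribute.} If $S(\chi)\neq 0$, some $\psi$ with $\psi^2=\chi$ has $\tau(\psi)\neq 0$; I must show such a $\psi$ is primitive modulo $2q$. Let $d\mid 2q$ be the conductor of $\psi$, with inducing primitive character $\psi^*$. The conductor of $\psi^2$ divides $d$, and $\psi^2=\chi$ has conductor $q$, so $q\mid d$. Hence $d\in\{q,2q\}$.

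\emph{Step 4: rule out $d=q$.} Use the classical formula for imprimitive Gauss sums,
\[
\tau(\psi)=\mu(2q/d)\,\psi^*(2q/d)\,\tau(\psi^*).
\]
If $d=q$, then $2q/d=2$ is not coprime to $d=q$, since $q$ is even. So $\psi^*(2)=0$ and $\tau(\psi)=0$. Therefore every nonvanishing term has $d=2q$, that is, $\psi$ primitive modulo $2q$.

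The only point needing care is this last step: invoking, or quickly reproving, the vanishing of $\tau(\psi)$ for $\psi$ induced from modulus $q$ when $2q/q=2$ shares a factor with $q$. A direct check also works. Split $m=m_0+jq$ with $j\in\{0,1\}$; $\psi$ is $q$-periodic, so $\tau(\psi)$ picks up the factor $\sum_{j=0}^{1}e(j/2)=0$. The remaining steps are routine orthogonality.
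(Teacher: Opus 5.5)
Your proof is correct, and Steps~1--2 coincide with the paper's argument: the same expansion $e(a/b)=\varphi(b)^{-1}\sum_{\xi \bmod b}\tau(\xi)\,\overline{\xi}(a)$, followed by orthogonality.

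Where you differ is the primitivity claim. The paper asserts, as ``straightforward,'' the stronger fact that \emph{every} $\psi$ \textup{(mod $2q$)} with $\psi^2=\chi$ is primitive. In your conductor language this means ruling out $d=q$ outright. That amounts to the $2$-adic observation that a primitive character modulo $2^{e}$ ($e\ge2$) is never the square of a character modulo $2^{e}$. Indeed, a square is trivial at $-1$ and sends $5$ to a root of unity of order dividing $2^{e-3}$. Primitivity for $e\ge3$ needs order exactly $2^{e-2}$, and for $e=2$ the only square is the trivial character.

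You instead allow conductor-$q$ square roots and show their Gauss sums vanish, either because $\psi^*(2)=0$ or by your splitting $m=m_0+jq$. This is self-contained and needs no structure theory of $(\mathbb{Z}/2^e\mathbb{Z})^\times$. It fully proves the lemma as stated, since only square roots with $\tau(\psi)\ne0$ matter.

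What your route does not give is the stronger statement. The paper reuses it in the converse direction of the proof of Proposition~\ref{prop:S1} (``as noted in the proof of Lemma~\ref{square}, $\psi$ is primitive''). There it applies the evaluations of $S_2(\psi,r)$, which assume $\psi$ primitive, to an arbitrary square root of $\chi$. To cover that use as well, you would add the short $2$-adic argument above to exclude $d=q$.
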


\begin{proof}
If $(a,b)=1$, then 
\[
e(a/b) = \frac{1}{\varphi(b)}\sum_{\xi \mod b} \tau(\xi) \, \xibar(a).
\]
Since $q$ is even, we have
\[
\begin{split}
S(\chi) 
&= \sum_{\substack{n=1 \\ (n,q)=1}}^{2q} \chi(n) \, e\Big(\frac{-n^2}{2q}\Big) = \sum_{n=1}^{2q}\chi(n) \Bigg( \frac{1}{\varphi(2q)}
\sum_{\psi \textup{ (mod }2q)}
\overline{\psi}(-n^2) \, \tau(\psi) \Bigg)\\
&=
\sum_{\psi \textup{ (mod }2q)}
\overline{\psi}(-1) \, \tau(\psi) 
 \Bigg(  \frac{1}{\varphi(2q)}
\sum_{n=1}^{2q}
\Big( \chi\cdot
\overline{\psi}^2\Big)(n) \Bigg).
\end{split}
\]
The inner sum over $n$ equals zero unless $\chi=\psi^2$, in which case it equals $\varphi(2q)$. Therefore
\[
S(\chi) = \sum_{\substack{\psi \textup{ (mod }2q) \\\psi^2= \chi}} \overline{\psi}(-1) \, 
\tau(\psi) .
\]
It is straightforward to show that if $\chi$ is primitive (mod $q$) with $q$ even and if $\psi^2= \chi$, then it must be the case that $\psi$ is primitive (mod $2q$). This proves the lemma. 
\end{proof}

We are now in a position to prove Proposition \ref{prop:S1}.

\begin{proof}[Proof of Proposition \ref{prop:S1}]
 Let $q \in \mathbb{N}$ be even and write $q = 2^{e_2} \prod_{p \mid q} p^{e_p}$, where the product runs over distinct odd primes $p$.  If $\chi$ (mod $q$) is a primitive character with $S(\chi) \ne 0$, then Lemma \ref{square} implies that there is a primitive character $\psi$ (mod $2q$) with $\chi=\psi^2$ and therefore
 \[
 S(\chi) = S(\psi^2) = S_2(\psi,-1). 
 \]
Conversely, suppose that (i) and (iii) hold. Then $\chi$ is the square of a character $\psi$ (mod $2q$), since for odd $p$ an even character (mod $p^{e_p}$) is the square of a character (mod $p^{e_p}$), and an even character (mod $2^{e_2}$) is the square of a character (mod $2^{e_2+1}$). As noted in the proof of Lemma \ref{square}, $\psi$ is primitive, so again $S(\chi)=S_2(\psi,-1)$.
Write $\psi= \prod_p \psi_p$ and $\chi = \prod_p \chi_p$, the products running over all primes $p$. By unique factorization of characters, we have $\chi_p=\psi_p^2$ for each $p$ dividing $q$. 

\smallskip

For odd primes $p$, when $e_p\ge 2$, we define
 \[
    \chi_p(p \!+\! 1) = e(\alpha_p/p^{e_p - 1}) \quad \text{and} \quad \psi_p(p \!+\! 1) = e(\beta_p/p^{e_p - 1}).
 \]
 As $\psi_p(p + 1)^2 = \chi_p(p + 1)$, we see that there is only a single choice
 $\alpha_p \equiv 2\beta_p \pmod p$ for odd $p$. In this case, part (b) of Proposition \ref{prop:S2} implies part (ii) of Proposition \ref{prop:S1}. When $e_p=1$, we note that $\chi_p$ is a square if and only if $\chi_p$ is even which implies part (i) of Proposition \ref{prop:S1}.

\smallskip

At the prime $2$, we note that $\psi_2$ is a primitive character (mod $2^{e_2+1}$) and we write
 \[
    \chi_2(5) = e(\alpha_2/2^{e_2 - 2}) \quad \text{and} \quad \psi_2(5) = e(\beta_2/2^{e_2 - 1}).
 \]
 Since $\chi_2 =\psi_2^2$, we have the
 possibilities $\beta_2 \equiv \alpha_2 \pmod{2^{e_2 - 1}}$ and
 $\beta_2 \equiv \alpha_2 + 2^{e_2 - 2} \pmod{2^{e_2 - 1}}$. If $e_2 = 3$, then
 we are not concerned about the value of $\beta_2$; if $e_2 = 4$, then the value
 of $\beta_2$ is determined (mod 4); if $e_2 > 4$ then the value of $\beta_2$
 is determined (mod 8). In every case, we may apply part (a) of Proposition \ref{prop:S2} to deduce part (iii) of Proposition \ref{prop:S1}.

 \smallskip
 
Finally, the expressions for $\Seven$ and $\Sodd$ in \eqref{Splus} and \eqref{Sminus}, respectively, follow immediately from Proposition \ref{prop:S2-evenprime} and Proposition \ref{prop:S2-oddprime}. This completes the proof. 
\end{proof}

\section{Proof of Theorem \ref{thm:counting_q} and Corollary \ref{sd}}\label{sec:counting}

In this section, we count the number of primitive characters $\chi$ for which
$S(\chi) \ne 0$. Recall that $N(q)$ is the number of primitive $\chi \mod q$ such that $S(\chi) \ne 0$.
This is not a multiplicative function\footnote{For example, $N(p) = 0$ but $N(8p) \ne 0$ for prime $p\geq 5$.},
but from the factorization of the sum using the Chinese Remainder Theorem in \eqref{eq:S2-CRT}, it
is closely related to one.

\smallskip

As we have evaluated $S(\chi)$ by relating it to $S_2(\psi, r)$, it will
similarly be convenient to first count the number of primitive $\psi$ (mod $q$)
such that $S_2(\psi, r) \ne 0$, which we label $N_2(q)$. As we will see in a
moment, this number does not depend on $r$ as long as $r$ is coprime to $q$,
and once it is clear that this number does not depend on $r$, it is also clear
that it is a multiplicative function from the factorization \eqref{eq:S2-CRT}.

\begin{proof}[Proof of Theorem \ref{thm:counting_q}]

When $q = 2^a$, a character $\psi \mod q$ is determined by
its values $\psi(-1) = \pm 1$ and $\psi(5) = e(\alpha/2^{a-2})$ for the $2^{a-2}$ 
%%%%%
%{\color{violet}[Z:$\alpha$ is odd, so $2^{a-3}$ choices of $\alpha$?]}
%%%%%
different possible values of $\alpha$. The primitive characters are exactly
those for which $\alpha$ is odd, and clearly every residue class modulo $8$
is covered the same number of times by the possible values of $\alpha$. As
$N_2(2^a)$ depends only on $\alpha$ (mod $8$) we see that it is independent
of $r$ and, examining Proposition \ref{prop:S2-evenprime}, we find that:
\begin{itemize}
    \item[($i$)] $N_2(2) = 0$ (there are no primitive characters);
    \item[($ii$)] for $2 \le a < 5$, we have $N_2(2^a) = 2^{a-2}$ (the sum is always nonzero);
    \item[($iii$)] $N_2(2^5) = 4$ (there are 8 primitive characters, and the sum vanishes for half of them);
    \item[($iv$)] for $a > 5$, we have $N_2(2^a) = 2^{a - 4}$ (there are $2^{a-2}$ primitive characters and the sum is nonzero one quarter of the time).
\end{itemize}

Now suppose that $q=p^a$ for an odd prime $p$. When $a=1$, by Proposition \ref{prop:S2-oddprime}, the sum $S_2(\psi, r)$ never vanishes, so $N_2(p) = p - 2$.
When $a \ge 2$, the vanishing or nonvanishing of $S_2(\psi, r)$ is
determined by the congruence class of $\alpha$ (mod $p$), where $\psi(p + 1) =
e(\alpha/p^{a-1})$. For half of the $\alpha$ (mod $p$) the sum will vanish and
for the other half it will not. The primitive characters are exactly those
for which $p \nmid \alpha$, and, as the character group is cyclic, each value
of $\alpha$ occurs the same number of times. There are $\varphi(p^a) - \varphi(p^{a-1})$
primitive characters, and for half of them the sum does not vanish, so
$N_2(p^a) = p^{a-2}(p-1)^2/2$ when $a \ge 2$.

\smallskip

Now, if $8$ divides $q$, then for every primitive character $\psi$ (mod $2q$)
such that no factor $\psi_p$ is quadratic, the
character $\psi^2$ has conductor $q$. So, we can compute $N(q)$ by computing
$N_2(2q)$ and adjusting for the multiplicity of squares and each quadratic
character (mod $p$). For each odd prime $p$
dividing $q$, this is a factor of $2$, and there is another factor of $4$ at the prime
$2$, so we find that
\[
    N(q) = \big\lceil  2^{m-5} \big\rceil  \prod_{p \exactdivide q} \left( \frac{p-3}{2} \right)
        \prod_{\substack{p^k \exactdivide q \\ p \textup{ odd},\ k > 1 }} \left( \frac{p^{k-2}(p-1)^2}{4} \right),
\]
which proves \eqref{eq:counting_q}, and completes the proof of the theorem.
\end{proof}

We now use the Selberg--Delange method to deduce Corollary \ref{sd} from Theorem \ref{thm:counting_q}.

\begin{proof}[Proof of Corollary \ref{sd}]
Using Theorem \ref{thm:counting_q}, 
%With this formula in hand, we now proceed to use the Selberg--Delange method to prove \eqref{eq:counting_Q} in Theorem  From Proposition \ref{prop:S2-evenprime}, we know that $N(2^a)=0$ if $a=1,2$ and $N(2^a)=\lceil 2^{a-5}\rceil$ if $a\ge 3$. We define a multiplicative function $N_0$ supported on odd positive integers by $N_0(1)=1$, $N_0(p)=\frac{p-3}{2}$, and $N_0(p^a)=\frac{p^{a-2}(p-1)^2}{4}$ for odd primes $p$ and $a\ge2$. Then, for odd positive integers $k$, \eqref{eq:counting_q} implies that $N(2^ak)=N(2^a) N_0(k)$ and 
we can formally expand the Dirichlet series
\begin{equation*}
\begin{split}
F(s)
:=\sum_{q=1}^{\infty}\frac{N(q)}{q^{s+1}}
&=\left(\frac{1}{2^{3(s+1)}}+\frac{1}{2^{4(s+1)}} + \sum_{a\geq 5}\frac{2^{a-5}}{2^{a(s+1)}}\right) \prod_{p\neq 2} \left( 1+\frac{p-3}{2p^{s+1}}+ \sum_{a\geq 2}\frac{p^{ a-2}(p-1)^2}{4p^{a(s+1)}}\right)\\
&=f(s) \, g(s) \, \prod_{p} \left( 1+\frac{1}{2p^s}\right),\\
\end{split}
\end{equation*}
where
\[
f(s) =\left(\frac{1}{2^{3(s+1)}}+\frac{1}{2^{4(s+1)}}+\frac{1}{2^{4s+5}(2^s-1)}\right)
\left(1-\frac{1}{2^{s+2}}+\frac{1}{2^{s+4}(2^{s}-1)}\right)^{-1}\\
\]
and
\[
g(s)
= \prod_{p}\left(1-\frac{3}{(2p^{s}+1)p}+\frac{(p-1)^2}{2p^2(p^s-1)(2p^s+1)}\right).
\]
%For any $\varepsilon>0$, we claim that $F(s)\zeta(s)^{-1/2}$ is analytic (and its Dirichlet series is uniformly bounded) in the half-plane $\mathrm{Re}(s)\geq 1/2+\varepsilon$. First 
Note that $f(s) \cdot g(s)$ is analytic, nonzero, and uniformly bounded for $\mathrm{Re}(s)\geq 1/2+\varepsilon$. If we let
\[
h(s)= \prod_{p}\left(1-\frac{1}{4p^{2s}}\right)\left(1+\frac{1}{2p^{2s}+p^s-1}\right), 
\]
then $h(s)$ is analytic, nonzero, and uniformly bounded in the half-plane $\mathrm{Re}(s)\geq 1/2+\varepsilon$, and satisfies
\[
\prod_{p}\left(1+\frac{1}{2p^s}\right)^2 = \frac{\zeta(s)}{\zeta(2s)} \, h(s). 
\]
Therefore, we have shown that 
\[
F(s)\, \zeta(s)^{-1/2} = f(s) \, g(s) \, h(s)^{1/2} \, \zeta(2s)^{-1/2},
\]
and hence $F(s)\zeta(s)^{-1/2}$ is analytic and uniformly bounded in the half-plane $\mathrm{Re}(s)\geq 1/2+\varepsilon$. Thus, for $Q\geq 3$, using the Selberg--Delange method as stated in \cite[Chapter II, Theorem 5.2]{tenenbaum}, we derive that 
\begin{equation*}
\sum_{q\leq Q} \frac{N(q)}{q}=  
f(1) \, g(1) \sqrt{\frac{h(1)}{ \pi \, \zeta(2)}} \frac{Q}{\sqrt{\log Q}}+ O\!\left(\frac{Q}{(\log Q)^{3/2}}\right)\!.
\end{equation*}
Summation by parts implies that
\begin{equation} \label{constantA}
\sum_{q\leq Q} N(q) = \frac{f(1) \, g(1)}{2}  \sqrt{\frac{h(1)}{\pi \, \zeta(2)}} \frac{Q^2}{\sqrt{\log Q}}+ O\!\left(\frac{Q^2}{(\log Q)^{3/2}}\right)\!,
\end{equation}
which proves \eqref{eq:counting_Q} with
\begin{equation}\label{Avalue}
\mathscr{A} =\frac{f(1) \, g(1)}{2} \sqrt{\frac{h(1)}{\pi \, \zeta(2)}}.
\end{equation}
This completes the proof of the corollary.
\end{proof}

%%%%%%%%%

%
%\section{Some curiosities}\label{sec:curiosity}
%
%TBA... \Znote{Do we need still need to rewrite the section of curiosities?}
%
%{\color{red} We previously had some discussion about the size of $|S(\chi)|$.}

\section{Proofs of Theorem \ref{thm:GL1}, Theorem \ref{thm:holo}, and Theorem \ref{thm:Maass}}\label{sec:integral}

In this section, we prove Theorem \ref{thm:GL1}, Theorem \ref{thm:holo}, and Theorem \ref{thm:Maass}. Instead of three separate proofs, we estimate the integral of a $Z$-function associated to a generic $L$-function and then specialize to the three cases covered in these theorems. 

\subsection{\texorpdfstring{$L$}{L}-functions}
For $d \in \mathbb{N}$, following \cite[Chapter 5]{IK}, we suppose that an $L$-function of degree $d$ is given by the absolutely convergent Dirichlet series and Euler product
\[
L(s,f)=\sum_{n\geq 1} \lambda_f(n) n^{-s} = \prod_p \big(1- \alpha_1(p) p^{-s} \big)^{-1} \cdots \big(1- \alpha_d(p) p^{-s} \big)^{-1}
\]
for $\mathrm{Re}(s)>1$, where $\lambda_f(1)=1$, $\lambda_f(n)\in \mathbb{C}$, and $\alpha_i(p) \in \mathbb{C}$. We assume that $L(s,f)$ extends to a meromorphic function on $\mathbb{C}$ with at most a pole at $s=1$. We further assume that $L(s,f)$ satisfies a functional equation, in asymmetric form, given by 
\[
L(s,f)= \G_f(s) \, L(1\!-\!s, \overline{f}) \quad \text{and} \quad \G_f(s)= \varepsilon_f \, N^{1/2-s} \prod_{j=1}^{d}\frac{\Gamma_{\mathbb{R}}(1\!-\!s\!+\!\kappa_j)}{\Gamma_{\mathbb{R}}(s\!+\!\kappa_j)},
\]
where $L(s, \overline{f}) = \overline{L(\bar{s}, f)}$.
Here $N$ is a positive integer (depending on the $L$-function), $|\varepsilon_f|=1$, and $\kappa_j\in \mathbb{C}$. We assume that the $\kappa_j$ are either real or come in conjugate pairs\footnote{Though this assumption does not hold in general, it is valid for the $L$-functions under consideration in this paper. We are able to use this assumption to define an analytic function which equals $Z_f(t)$ on the critical line.} and that $\mathrm{Re}(\kappa_j)>-\frac{1}{2}$.
%\Znote{Do we need to assume $\mathrm{Re}(\kappa_j)>-\frac{1}{2}$ for an analytic $G_f(s)$ on the half plane of $\mathrm{Re}(s)\geq 1/2$?}
With these assumptions, we define the $Z$-function, $Z_f(t)$, associated to $L(s,f)$ by
\[
Z_f(t) =  \G_f(\tfrac{1}{2}\!+\!it)^{-1/2} \, L(\tfrac{1}{2}\!+\!it,f).
\]
Then $Z_f(t)$ is real-valued for $t\in \mathbb{R}$, infinitely differentiable, and $|Z_f(t)|=|L(\tfrac{1}{2}+it,f)|$.

\subsection{Stationary phase}

Throughout the section, we assume $N$ is fixed and we allow the implied constant to depend on $N$ (and therefore on $L(s,f)$). %Adapting Gonek's stationary phase argument, we establish the following lemma. 

\begin{lemma}\label{gonek}
Let $\ell>0$ be fixed, and let $a$ lie in a fixed bounded interval. For large $x$ with $x<r\leq 2x$, we have, uniformly in $a$,
\[
\int_{x}^{2x} t^{a} \left(\frac{t}{r e}\right)^{it \ell}  \mathrm{d}t = \sqrt{\frac{2\pi }{\ell }} \, r^{a+\frac{1}{2}} \, e^{-ir\ell  + \frac{\pi i}{4}} + \mathcal{E}(a,\ell,r,x),
\]
where
\[
\mathcal{E}(a,\ell,r,x) = O\big((\ell x)^{a}\big) + O\!\left(\frac{(\ell x)^{a+1}}{\ell \, |x-r|+\sqrt{\ell x}}\right) +  O\!\left(\frac{(\ell x)^{a+1}}{\ell \, |2x-r|+\sqrt{\ell x}}\right).
\]
When $r\leq x$ or $r> 2x$, there is no main term.
\end{lemma}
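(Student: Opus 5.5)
This is a standard stationary phase estimate, in the form of Gonek's lemma. I would follow the usual second-derivative / first-derivative test argument. Write the integrand as $t^a e^{i\phi(t)}$ with phase
\[
\phi(t)=\ell\, t\log\frac{t}{re}.
\]
Then
\[
\phi'(t)=\ell\log(t/r), \qquad \phi''(t)=\ell/t\asymp \ell/x .
\]
So the only stationary point is $t=r$, which lies in $[x,2x]$ exactly when $x<r\le 2x$. At that point
\[
\phi(r)=-\ell r, \qquad \phi''(r)=\ell/r .
\]
The standard stationary phase main term is
\[
r^a\sqrt{2\pi/\phi''(r)}\,e^{i\phi(r)+\pi i/4}=\sqrt{2\pi/\ell}\;r^{a+1/2}e^{-ir\ell+\pi i/4},
\]
which matches the claimed main term.

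To get the stated error terms, I would split $[x,2x]$ into three pieces. The central piece is $J=[r-\Delta,r+\Delta]\cap[x,2x]$ with $\Delta\asymp\sqrt{x/\ell}$; the other two are the flanks. On the flanks, $|\phi'(t)|\asymp \ell|t-r|/x$. I would integrate by parts once, writing $e^{i\phi}=(e^{i\phi})'/(i\phi')$. This gives boundary terms of size $t^a/|\phi'(t)|$ at the endpoints $x$, $2x$ and $r\pm\Delta$, plus an integral of $\big|(t^a/\phi')'\big|$.

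The boundary terms at $x$ and $2x$ are
\[
\ll \frac{x^{a+1}}{\ell|x-r|},\qquad \frac{x^{a+1}}{\ell|2x-r|}.
\]
Combined with the trivial bound $x^a\Delta$ when $r$ is within $\Delta$ of an endpoint, they give the two terms of shape $x^{a+1}/(\ell|x-r|+\sqrt{\ell x})$. The boundary terms at $r\pm\Delta$ are of size $x^{a}\Delta$. Rather than bounding these, I would let them cancel against the central piece: replace the central piece by a Fresnel-type evaluation, expanding $\phi(t)=\phi(r)+\tfrac12\phi''(r)(t-r)^2+O(\ell|t-r|^3/x^2)$ and $t^a=r^a(1+O(|t-r|/x))$, and complete the Gaussian integral.

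The cleaner route is to quote a ready-made stationary phase lemma: Gonek's lemma, or Titchmarsh Lemma 4.6, or Huxley's version. Such lemmas have conditions $|\phi''|\asymp\ell/x$, $|\phi'''|\ll \ell/x^2$, and amplitude $g=t^a$ with $g'\ll x^{a-1}$. They yield exactly
\[
\text{main term}+O\Big(\frac{g(x)}{|\phi'(x)|+\sqrt{\phi''(x)}}\Big)+O\Big(\frac{g(2x)}{|\phi'(2x)|+\sqrt{\phi''(2x)}}\Big)+O\big(x^{a}\,(\ell/x)^{-1/2}\cdot(\ell x)^{-1/2}\big).
\]
The remaining step is to translate these into the paper's notation. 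Since $|\phi'(x)|=\ell|\log(x/r)|\asymp \ell|x-r|/x$ for $x<r\le 2x$, and $\sqrt{\phi''}\asymp\sqrt{\ell/x}$, the endpoint terms become $x^{a+1}/(\ell|x-r|+\sqrt{\ell x})$. This is consistent with the paper's form up to powers of the fixed $\ell$, which are absorbed into the constants. The last term is $O(x^a)$, which is the stated $O((\ell x)^a)$ since $\ell$ is fixed. Uniformity in $a$ over a bounded interval is automatic, since every constant depends only on bounds for $a$.

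When $r\le x$ or $r>2x$, $\phi'$ has constant sign on $[x,2x]$ with $|\phi'|\gg \ell|t-r|/x$. A single integration by parts (first derivative test) then gives only the endpoint error terms, with no main term.

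I expect the main obstacle to be bookkeeping rather than ideas. The critical step is handling $r$ within $O(\sqrt{x/\ell})$ of an endpoint, where the stationary point sits near the boundary and only "half" the Fresnel integral is present. The error terms $x^{a+1}/\sqrt{\ell x}\asymp x^{a+1/2}$ are designed to be as large as the main term there, so the statement remains valid. My plan there is simply to bound everything trivially by $x^a\sqrt{x/\ell}$ and absorb the main term into the error.
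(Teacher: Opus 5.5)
Your main route, quoting Gonek's Lemma~2 and rescaling, is the paper's proof: substituting $u=\ell t$ turns the integral into $\ell^{-a-1}\int_{\ell x}^{2\ell x}u^{a}\exp\bigl(iu\log\frac{u}{e\ell r}\bigr)\,\mathrm{d}u$, Gonek's lemma applies with stationary point $\ell r$, and the resulting powers of the fixed $\ell$ go into the implied constants. One caution about your alternatives: Titchmarsh's Lemma~4.6 gives only an interior error $O(x^{a+2/5})$, and in your hand-rolled split at $\Delta\asymp\sqrt{x/\ell}$ the flank integral $\int|(t^{a}/\phi')'|\,\mathrm{d}t$ is already $\asymp x^{a}\sqrt{x/\ell}$, the size of the main term, so neither gives the $O((\ell x)^{a})$ term without a sharper stationary phase result such as Gonek's.
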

\begin{proof}
This is a straightforward consequence of \cite[Lemma 2]{gonek}, after a suitable variable change.
\end{proof}

\begin{proposition}\label{mainlemma}
Let $T$ be large, $\mathcal{T}_f= N^{1/d} \, T/(2\pi)$, and $c=1+1/\log \mathcal{T}_f$. Assume that
\begin{equation} \label{L bounds}
|L(\tfrac{1}{2}+it,f)| \ll (|t|+1)^{d/4} \quad \text{and} \quad \sum_{n\ge 1} \frac{|\lambda_f(n)|}{n^c} \ll (\log T)^\beta
\end{equation}
for some $\beta \ge 0$. Then
\begin{equation*}\label{7/16}
\begin{split}
\int_T^{2T} Z_f(t) \, \mathrm{d}t &= A_f \sum_{\mathcal{T}_f < n^{2/d} \le 2\mathcal{T}_f } \lambda_f(n) \, n^{1/d-1/2} \, e\!\left( -\frac{d}{2} \left(\frac{n^2}{N} \right)^{\!1/d} \right) + O\!\left(T^{d/4} (\log T)^\beta \right)
\\
&\qquad\qquad+O\!\left( T^{1-d/4} \max_{\alpha \in \{1,2\}} \left\{ \sum_{\frac{3\alpha }{4}\mathcal{T}_f\le n^{2/d} \le \frac{5\alpha}{4} \mathcal{T}_f } \frac{|\lambda_f(n)|}{|n^{2/d}\!-\!\alpha \mathcal{T}_f|+\sqrt{T}} \right\}  \right),
\end{split}
\end{equation*}
where
\[
A_f = 2^{3/2}\pi\,  (\varepsilon_f \, d \, N^{1/d})^{-1/2} \ e\!\left( \frac{1}{8} \sum_{1\le j \le d} \mathrm{Re}(\kappa_j)  + \frac{1}{8} - \frac{d}{16}\right),
\]
and the implied constants depend on the $L$-function $L(s,f)$. 
\end{proposition}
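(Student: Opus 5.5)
We will follow the standard contour-shift argument behind Ivi\'c's and Korolev's treatment of $\int Z(t)\,\mathrm{d}t$, and then reduce to Lemma \ref{gonek}. We start by writing $\int_T^{2T} Z_f(t)\,\mathrm{d}t$ as a contour integral. With $s=\tfrac12+it$,
\[
\int_T^{2T} Z_f(t)\,\mathrm{d}t = \frac{1}{i}\int_{1/2+iT}^{1/2+2iT} \G_f(s)^{-1/2} L(s,f)\,\mathrm{d}s.
\]
Because the $\kappa_j$ are real or occur in conjugate pairs, $\G_f(s)^{-1/2}$ extends analytically to the strip $\tfrac12\le \mathrm{Re}(s)\le c$ for $|\mathrm{Im}\, s|\asymp T$. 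We then move the line of integration to $\mathrm{Re}(s)=c=1+1/\log\mathcal{T}_f$.

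The horizontal segments are handled by Stirling's formula, $|\G_f(\sigma+it)^{-1/2}|\asymp (N^{1/d}|t|/2\pi)^{d(\sigma-1/2)/2}$, together with convexity between the bound $|L(\tfrac12+it,f)|\ll |t|^{d/4}$ from \eqref{L bounds} and the bound $\ll(\log T)^\beta$ on $\mathrm{Re}(s)=c$. The integrand is then $\ll T^{d/4}(\log T)^\beta$ uniformly, so these segments contribute $O(T^{d/4}(\log T)^\beta)$. On the line $\mathrm{Re}(s)=c$ we expand $L(s,f)$ into its absolutely convergent Dirichlet series and interchange sum and integral. This produces
\[
\sum_n \lambda_f(n) n^{-c}\int_T^{2T} \G_f(c+it)^{-1/2} n^{-it}\,\mathrm{d}t.
\]

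Next we apply Stirling with enough precision to write $\G_f(c+it)^{-1/2}$ in the form
\[
\varepsilon_f^{-1/2}\,(\text{const})\; t^{d(c-1/2)/2}\,\Big(\frac{N^{1/d}t}{2\pi e}\Big)^{idt/2} e^{i\phi}\big(1+O(1/t)\big),
\]
where the phase constant $\phi$ collects $\sum_j\mathrm{Re}(\kappa_j)$ and the terms $-\pi d/8$ coming from $\Gamma_{\mathbb R}$. Combining this with $n^{-it}$, we rescale $t=2\pi u/N^{1/d}$. The $n$-th integral then becomes
\[
\int u^{a}\Big(\frac{u}{r e}\Big)^{iu\ell}\,\mathrm{d}u,\qquad \ell=\frac{2\pi d}{N^{1/d}}\cdot\frac{1}{2\pi}\cdot 2\pi,\quad r=\frac{n^{2/d}}{\cdots},
\]
so that the stationary point is at $u=n^{2/d}$ in the normalization $\mathcal T_f$. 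Here $a=d(c-\tfrac12)/2$ is bounded, as Lemma \ref{gonek} requires. Lemma \ref{gonek} then gives the main term exactly when $\mathcal{T}_f<n^{2/d}\le 2\mathcal{T}_f$. Its value is $\sqrt{2\pi/\ell}\,r^{a+1/2}e^{-ir\ell+\pi i/4}$, and after multiplying by $n^{-c}$ the powers combine to $n^{1/d-1/2}$. The phase $e^{-ir\ell}$ becomes $e(-\tfrac d2 (n^2/N)^{1/d})$, and the collected constants produce $A_f$, whose phase is $e(\tfrac18\sum\mathrm{Re}\,\kappa_j+\tfrac18-\tfrac d{16})$.

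For the error terms we sum $|\lambda_f(n)|n^{-c}\,\mathcal{E}$ over $n$, using the three terms of $\mathcal{E}$ from Lemma \ref{gonek}. The $O((\ell x)^a)$ term, summed against $|\lambda_f(n)|n^{-c}$, gives $O(T^{d/4}(\log T)^\beta)$ because $x^a\asymp T^{d/4}$ (this uses $c-1/2\approx 1/2$). The other two terms are large only when $n^{2/d}$ is near $\mathcal{T}_f$ or near $2\mathcal{T}_f$ (the cases $\alpha=1,2$). For $n^{2/d}$ outside $[\tfrac34\alpha\mathcal T_f,\tfrac54\alpha\mathcal T_f]$, the denominator is $\gg T$, and the contribution is absorbed into $O(T^{d/4}(\log T)^\beta)$. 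Inside that window we have $n^{-c}\asymp T^{-d/2}$, so the contribution is $T^{-d/2}\cdot T^{d/4+1}/(|n^{2/d}-\alpha\mathcal T_f|+\sqrt T)$, which is exactly the stated second error term.

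The \textbf{main obstacle} is this bookkeeping. First, the Stirling expansion of $\G_f^{-1/2}$ must be put into exactly the shape $(t/re)^{it\ell}$ of Lemma \ref{gonek}, with the $O(1/t)$ correction and the $t$-dependence of the amplitude controlled. We plan to handle the $O(1/t)$ term by a trivial or first-derivative bound, which gives the $(\ell x)^a$-type error. Second, all the $2\pi$ factors and the branch of $\varepsilon_f^{-1/2}$ must be tracked so that the constant $A_f$ and the phase come out exactly as stated.
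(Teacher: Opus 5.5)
Your proposal is correct and follows the paper's proof essentially step for step: Cauchy's theorem on the rectangle with right edge $\mathrm{Re}(s)=c$, Phragm\'en--Lindel\"of for the horizontal edges, termwise integration of the Dirichlet series using Stirling's formula (with the $O(1/t)$ term bounded trivially), the rescaling $u=N^{1/d}t/(2\pi)$, and then Lemma \ref{gonek}, with the error split according to whether $n^{2/d}$ lies near $\alpha\mathcal{T}_f$ for $\alpha\in\{1,2\}$. The only slip is your displayed value of $\ell$: it should be $\ell=\pi d N^{-1/d}$ (with $r=n^{2/d}$ and $x=\mathcal{T}_f$), since that is the value which gives the phase $e(-\tfrac d2 (n^2/N)^{1/d})$ and the constant $A_f$ that you state.
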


{\sc Remark}. The $L$-functions considered in Theorem \ref{thm:GL1}, Theorem \ref{thm:holo}, and Theorem \ref{thm:Maass} all satisfy the bounds in \eqref{L bounds}.

\begin{proof}
Making the variable change $s=\tfrac{1}{2}+it$, we have
\[
\int_T^{2T} Z_f(t) \, \mathrm{d}t = \frac{1}{i} \int_{1/2+iT}^{1/2+2iT} \G_f(s)^{-1/2} \, L(s,f) \, \mathrm{d}s. 
\]
The function $\G_f(s)$ is analytic and nonzero for $\mathrm{Re}(s) \ge 1/2$ and $\mathrm{Im}(s) \ge T$, when $T$ is sufficiently large. Since this region is convex, we can define an analytic branch of $\G_f(s)^{-1/2}$ in it, and we choose the branch that agrees with the one defining $Z_f(t)$ on the segment from $\frac12+iT$ to $\frac12+2iT$. Cauchy's Theorem, applied to the positively oriented rectangle with vertices 
\[ 
\Big[\tfrac{1}{2}+iT,c+iT,c+2iT,\tfrac{1}{2}+2iT \Big],
\] 
implies that
\[
\int_T^{2T} Z_f(t) \, \mathrm{d}t =  \frac{1}{i} \int_{c+iT}^{c+2iT} \G_f(s)^{-1/2} \, L(s,f) \, \mathrm{d}s + H_1 -H_2,
\]
where $c=1+1/\log \mathcal{T}_f$ and
\[
H_k = \frac{1}{i} \int_{1/2+ k i T}^{c+ k i T} \G_f(s)^{-1/2} \, L(s,f) \, \mathrm{d}s
\]
for $k=1,2$. Note that trivially
\begin{equation} \label{PLmax}
|H_k| \le \big(c-\tfrac{1}{2}\big) \max_{\frac{1}{2}\le \sigma \le c} \big|\G_f(\sigma+kiT)^{-1/2} \, L(\sigma+kiT,f) \big|.
\end{equation}
Since $|\G_f(\tfrac{1}{2}+it)|=1$ for $t\in\mathbb{R}$, by \eqref{L bounds} we see that 
\begin{equation} \label{PL1}
\big|\G_f(\tfrac{1}{2}+it)^{-1/2} \, L(\tfrac{1}{2}+it,f) \big| \ll t^{d/4}
\end{equation}
as $t\to\infty$. Also, by Stirling's formula for the gamma function, we have
\begin{equation}\label{stirling}
\begin{split}
\G_f&(\sigma+it)^{-1/2} 
\\
&= 
\varepsilon_f^{-1/2}
\,e\!\left(\frac{1}{8}\sum_{j=1}^{d}\mathrm{Re}(\kappa_j)-\frac{d}{16}\right)
\left( N^{1/d} \frac{t}{2\pi}\right)^{d(\sigma-\frac{1}{2})/2}
\left( N^{1/d}\frac{t}{2\pi e}\right)^{itd/2}
\left\{1+O\left(\!\frac{1}{|t|}\!\right)\right\},
\end{split}
\end{equation}
uniformly for $\sigma$ in a compact interval and $t$ sufficiently large. So, again by \eqref{L bounds}, we deduce that
\begin{equation} \label{PL2}
\big|\G_f(c+it)^{-1/2} \, L(c +it,f) \big| \ll t^{d/4} \sum_{n\ge 1} \frac{|\lambda_f(n)|}{n^c} \ll t^{d/4} (\log T)^\beta
\end{equation}
as $t\to\infty$. Using \eqref{PL1} and \eqref{PL2}, an application of the Phragm\'en--Lindel\"of principle  now implies that
% the maximum in \eqref{PLmax} occurs at $\sigma =c$ and thus
\[
|H_1|+|H_2| \ll T^{d/4} (\log T)^\beta.
\]

Next we expand $L(s,f)$ as an absolutely convergent Dirichlet series, interchange the sum and the integral, and then make the variable change $s=c+it$, to see that
\[
\begin{split}
\frac{1}{i} &\int_{c+iT}^{c+2iT} \G_f(s)^{-1/2} \, L(s,f) \, \mathrm{d}s = \sum_{n\ge 1} \frac{\lambda_f(n)}{n^c} \int_T^{2T} \G_f(c+it)^{-1/2} \, n^{-it} \, \mathrm{d}t
\\
&= 
\varepsilon_f^{-1/2}\,e\!\left(\frac{1}{8}\sum_{j=1}^{d}\mathrm{Re}(\kappa_j)-\frac{d}{16}\right)
\sum_{n\ge 1} \frac{\lambda_f(n)}{n^c} \int_T^{2T} 
\left(N^{1/d}\frac{t}{2\pi}\right)^{d(c-\frac{1}{2})/2}
\left(N^{1/d}\frac{t}{2\pi n^{2/d}e}\right)^{itd/2} \mathrm{d}t
\\
& \qquad  + O\!\left(T^{d/4} (\log T)^\beta \right).
\end{split}
\]
Here, the second equality comes after applying Stirling's formula for $\G_f(c+it)^{-1/2}$ in \eqref{stirling} and then using \eqref{L bounds} to obtain the big-$O$ term. Making the variable change $N^{1/d} \,t/(2\pi) \mapsto t$ and then combining estimates, we have deduced that
\begin{equation*}\label{general_eq}
\begin{split}
\int_T^{2T} Z_f(t) \, \mathrm{d}t 
&=
\varepsilon_f^{-1/2}\,e\!\left(\frac{1}{8}\sum_{j=1}^{d}\mathrm{Re}(\kappa_j)-\frac{d}{16}\right)
\frac{2\pi}{N^{1/d}}
\sum_{n\ge 1} \frac{\lambda_f(n)}{n^c} \int_{\mathcal{T}_f}^{2\mathcal{T}_f} t^{d(c-\frac{1}{2})/2}\left( \frac{t}{n^{2/d}e}\right)^{ i t \pi d N^{-1/d}}\!\mathrm{d}t
\\
&\qquad\qquad+ O\!\left(T^{d/4} (\log T)^\beta \right).
\end{split}
\end{equation*}
We now use Lemma \ref{gonek}  to estimate the integral on the right-hand side. The main term is 
\[
A_f \sum_{\mathcal{T}_f < n^{2/d} \le 2\mathcal{T}_f } \lambda_f(n) \, n^{1/d-1/2} \, e\!\left( -\frac{d}{2} \left(\frac{n^2}{N} \right)^{\!1/d} \right),
\]
while the error term (using \eqref{L bounds} and the notation in Lemma \ref{gonek}) is
\[
\begin{split}
&\ll \sum_{ n \ge 1 } \frac{|\lambda_f(n)|}{n^c}\left\{ T^{d/4}+  \mathcal{E}\Big(\tfrac{d}{2}(c-\tfrac{1}{2}),\pi d N^{-1/d},n^{2/d},\mathcal{T}_f\Big)\right\}\\
&\ll T^{d/4} (\log T)^\beta + T^{1+d/4} \max_{\alpha \in \{1,2\}} \left\{ \sum_{n \ge 1} \frac{|\lambda_f(n)|}{n^c}\frac{1}{|n^{2/d}\!-\!\alpha \mathcal{T}_f|+\sqrt{T}} \right\} 
\\
&\ll T^{d/4} (\log T)^\beta + T^{1-d/4} \max_{\alpha \in \{1,2\}} \left\{ \sum_{\frac{3\alpha }{4}\mathcal{T}_f\le n^{2/d} \le \frac{5\alpha}{4} \mathcal{T}_f}\frac{|\lambda_f(n)|}{|n^{2/d}\!-\!\alpha \mathcal{T}_f|+\sqrt{T}} \right\}.
\end{split}
 \]
Combining estimates, the proposition now follows.
\end{proof}

\subsection{Dirichlet \texorpdfstring{$L$}{L}-functions: Proof of Theorem \ref{thm:GL1}}
For a Dirichlet $L$-function $L(s,\chi)$ associated to a primitive character $\chi$ (mod $q$), Proposition \ref{mainlemma} implies that
\begin{equation}\label{big-O}
\begin{split}
\int_{T}^{2T}Z_\chi(t)\,\mathrm{d}t
& \ = \ A_\chi
\sum_{ \sqrt{\mathcal{T}_\chi} < n \le \sqrt{2\mathcal{T}_\chi}} \!\!\!\!\!\!\!\! \chi(n) \, \sqrt{n} \, e\!\left(\frac{-n^2}{2q}\right)  +O\!\left(T^{1/4}\log T\right)\\
&\qquad \quad +O\left(T^{3/4}\underset{\alpha\in \{1,2\}}{\max}\left\{ \sum_{\frac{3\alpha}{4}\mathcal{T}_\chi \leq n^2\leq \frac{5\alpha}{4}\mathcal{T}_\chi}
\frac{1}{|n^2 - \alpha \mathcal{T}_\chi| +\sqrt{T}}\right\}\right),
\end{split}
\end{equation}
where
\[
A_\chi = \frac{2^{3/2}\, i^{\mathfrak{a}/2} \, \pi }{q^{1/4} \sqrt{\tau(\chi)}}  \, e\!\left(\frac{\mathfrak{a}}{8}+\frac{1}{16}\right)   \quad \text{and} \quad \mathcal{T}_\chi=\frac{qT}{2\pi}.
\]
Here and throughout this proof, all implied constants are allowed to depend on $q$. It makes the calculations cleaner if we first estimate the error terms and then sum over dyadic intervals (to recover the integral from $0$ to $T$) before evaluating the main term. Since the second big-$O$ term can be bounded in a similar manner when $\alpha=1$ or $\alpha=2$, we only give the details of how to estimate this term when $\alpha=1$. In this case, we partition the interval $[\sqrt{3\mathcal{T}_\chi/4}, \sqrt{5\mathcal{T}_\chi/4}]$ into three subintervals:
\[ 
\begin{split}
V_1&=\left[\sqrt{\mathcal{T}_\chi}-\log T, \sqrt{\mathcal{T}_\chi}+\log T\right],\\ 
V_2&=\left[\sqrt{3\mathcal{T}_\chi/4},\sqrt{\mathcal{T}_\chi} - \log T\right), \text{ and} \\
V_3&=\left(\sqrt{\mathcal{T}_\chi}+\log T, \sqrt{5\mathcal{T}_\chi /4}\right].
\end{split}
\] 
Accordingly, we define
\[
\Sigma_j=T^{3/4}\sum_{n\in V_j}\frac{1}{|\mathcal{T}_\chi\!-\!n^2|+\sqrt{T}},
\]
for $1\leq j\leq 3$, so that this error term is $O(\Sigma_1+\Sigma_2+\Sigma_3)$.  For $n\in V_1$, we have 
\[
\Sigma_1\ll T^{3/4} \sum_{n\in V_1} \frac{1}{\sqrt{T}} \ll T^{1/4} |V_1| \ll T^{1/4}\log T.
\]
For $n\in V_2$, by the integral test, we observe that
\[
\begin{split}
\Sigma_2
&\ll T^{1/4} + T^{3/4}\int_{V_2}\frac{\mathrm{d} u}{\mathcal{T}_\chi+\sqrt{\mathcal{T}_\chi}-u^2}. 
\end{split}
\]
By partial fractions, the integral on the right-hand side is equal to
\[
\frac{1 }{2\sqrt{\mathcal{T}_\chi+\sqrt{\mathcal{T}_\chi}}} \  \int_{V_2} \left( \frac{1}{\sqrt{\mathcal{T}_\chi+\sqrt{\mathcal{T}_\chi}}-u} + \frac{1}{\sqrt{\mathcal{T}_\chi+\sqrt{\mathcal{T}_\chi}}+u} \right) \mathrm{d}u \ll \frac{\log T}{\sqrt{T}}.
\]
Therefore, $\Sigma_2 \ll T^{1/4}\log T$. We can bound the contribution from the $n\in V_3$ similarly to $n \in V_2$ and conclude that $\Sigma_3\ll T^{1/4}\log T$, as well. Estimating similarly when $\alpha=2$, we conclude that
\[
\int_{T}^{2T}Z_\chi(t)\,\mathrm{d}t
 \ = \ A_\chi \sum_{ \sqrt{\mathcal{T}_\chi} < n \le \sqrt{2\mathcal{T}_\chi}}
\!\!\!\!\!\!\!\! \chi(n) \, \sqrt{n} \, e\!\left(\frac{-n^2}{2q}\right)  +O\!\left(T^{1/4}\log T\right).
\]
Summing this estimate dyadically over the intervals $[T/2,T], [T/4,T/2], [T/8,T/4], \ldots$ and then using the definition of $\mathcal{T}_\chi$ and $A_\chi$, we derive that
\[
\int_{0}^{T}Z_\chi(t)\,\mathrm{d}t
 \ = \ \frac{2^{3/2}\, i^{\mathfrak{a}/2} \, \pi}{q^{1/4} \sqrt{\tau(\chi)}}  \, e\!\left(\frac{\mathfrak{a}}{8}+\frac{1}{16}\right)  
\sum_{ n \le \sqrt{\frac{qT}{2\pi}}}  \chi(n) \, \sqrt{n} \, e\!\left(\frac{-n^2}{2q}\right)  +O\!\left(T^{1/4}\log T\right).
\]
Applying partial summation to the sum over $n$ on the right-hand side, we derive that
\[
\sum_{n\leq x} \chi(n) \sqrt{n}\,e\!\left(\frac{-n^2}{2q}\right)
=\frac{S(\chi)}{3q}\, x^{3/2} + O(x^{1/2}).
\]
The desired formula follows by putting $x=\sqrt{qT/(2\pi)}$.
Note that we may assume that $\mathfrak{a}=0$ as otherwise, if $\mathfrak{a}=1$, then $S(\chi)=0$. The final assertion of Theorem \ref{thm:GL1} follows from Theorem \ref{thm:counting_q}, since $N(8p)=\frac{p-3}{2}>0$ for every prime $p\ge5$. This completes the proof of Theorem \ref{thm:GL1}.

\subsection{Holomorphic cusp forms and Hecke--Maass cusp forms} 
\subsubsection{Averages of coefficients of \texorpdfstring{$L$}{L}-functions in short intervals}
In order to use Proposition \ref{mainlemma} to prove Theorem \ref{thm:holo} and Theorem \ref{thm:Maass}, we need to understand how the normalized coefficients of primitive holomorphic cusp forms and primitive Hecke--Maass cusp forms behave in short intervals. 

\begin{lemma}\label{shortsums}
Let $f \in S_k(\Gamma_0(N),\chi)$ be a primitive holomorphic cusp form. Then 
\begin{equation} \label{short1}
\sum_{x<n \le x+\sqrt{x}} |\lambda_f(n)| \ll_f \sqrt{x}, \quad \text{as } x \to \infty.
\end{equation}
Let $\varphi$ be a primitive Hecke--Maass cusp form of level $N$ with nebentypus  $\chi$. Then, for any $\varepsilon>0$, we have
\begin{equation} \label{short2}
\sum_{x<n \le x+\sqrt{x}} |\lambda_\varphi(n)| \ll_{\varphi,\varepsilon} x^{\frac{11}{20}+\varepsilon}, \quad \text{as } x \to \infty.
\end{equation}
\end{lemma}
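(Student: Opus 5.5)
For the holomorphic bound \eqref{short1}, I will apply Cauchy--Schwarz on the short interval,
\[
\sum_{x<n\le x+\sqrt{x}} |\lambda_f(n)| \le \Big(\sqrt{x}+1\Big)^{1/2}\Big(\sum_{x<n\le x+\sqrt{x}} |\lambda_f(n)|^2\Big)^{1/2}.
\]
The task then reduces to showing that the second moment over $(x,x+\sqrt{x}]$ is $\ll_f \sqrt{x}$. I will use the Rankin--Selberg asymptotic
\[
\sum_{n\le x}|\lambda_f(n)|^2 = c_f x + O_f(x^{3/5}).
\]
Differencing at $x$ and $x+\sqrt{x}$ gives $c_f\sqrt{x} + O(x^{3/5})$. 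This is not good enough, because $3/5>1/2$.

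So I will use a pointwise bound instead. For holomorphic primitive forms, Deligne's theorem gives $|\lambda_f(n)|\le d(n)$ (at ramified primes $|\lambda_f(p)|\le 1$). The claim therefore reduces to $\sum_{x<n\le x+\sqrt{x}} d(n) \ll \sqrt{x}$, up to a bounded multiplicative factor at primes dividing $N$. Shiu's Brun--Titchmarsh theorem for multiplicative functions does not quite suffice: it gives $\ll \sqrt{x}\,\exp(\sum_{p\le x}(d(p)-1)/p)/\log x \asymp \sqrt{x}$, since $d(p)=2$. The clean route is to apply Shiu's theorem directly to the nonnegative multiplicative function $|\lambda_f(n)|$. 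This function satisfies $|\lambda_f(p^k)|\le k+1$, and the interval length $\sqrt{x}=x^{1/2}$ is admissible. Shiu then gives
\[
\sum_{x<n\le x+\sqrt{x}}|\lambda_f(n)| \ll \frac{\sqrt{x}}{\log x}\exp\Big(\sum_{p\le x}\frac{|\lambda_f(p)|}{p}\Big).
\]
Next, $|\lambda_f(p)|\le \tfrac12(1+|\lambda_f(p)|^2)$. Together with Rankin--Selberg, this gives $\sum_{p\le x}|\lambda_f(p)|/p \le \log\log x + O(1)$, and the bound \eqref{short1} follows. (In fact the sharper $\sum_{p\le x}|\lambda_f(p)|/p \le (1-\delta)\log\log x$, via Sato--Tate or the Elliott-type inequality, would give an extra saving of $(\log x)^{-\delta}$, but it is not needed.)

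For the Maass bound \eqref{short2}, Ramanujan is unavailable, so Shiu's hypothesis $|\lambda(p^k)|\le A^k$ still holds via Kim--Sarnak, but the condition $h(n)\ll n^\varepsilon$ would only give a weaker result. Instead I will again use Cauchy--Schwarz, now with the Rankin--Selberg error term. The standard bound for $GL(2)$ Rankin--Selberg is
\[
\sum_{n\le x}|\lambda_\varphi(n)|^2 = c_\varphi x + O_{\varphi,\varepsilon}(x^{3/5+\varepsilon})
\]
(Rankin's exponent, valid for Maass forms as well). Differencing gives $\sum_{x<n\le x+\sqrt{x}}|\lambda_\varphi(n)|^2 \ll x^{3/5+\varepsilon}$. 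Cauchy--Schwarz then yields
\[
\sum_{x<n\le x+\sqrt{x}} |\lambda_\varphi(n)| \ll x^{1/4}\cdot x^{3/10+\varepsilon/2} = x^{11/20+\varepsilon/2},
\]
which is exactly \eqref{short2}. This confirms that the exponent $11/20$ is $\tfrac14+\tfrac{3}{10}$.

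The main obstacle is the holomorphic case: one must get a \emph{sharp} $\sqrt{x}$ with no $x^\varepsilon$ loss. The step to verify most carefully is that Shiu's theorem applies uniformly to $|\lambda_f|$ with interval length $x^{1/2}$, including the finitely many ramified primes. Its Mertens-type input $\sum_{p\le x}|\lambda_f(p)|/p\le\log\log x+O(1)$ comes from the prime number theorem for $L(s,f\times\overline f)$.
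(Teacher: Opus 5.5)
Your proof is correct and is essentially the paper's. The holomorphic case applies Shiu's theorem to $|\lambda_f|$ together with the bound $\sum_{p\le x}|\lambda_f(p)|/p\le\log\log x+O(1)$ from Rankin--Selberg; you get that bound via $|\lambda_f(p)|\le\frac12(1+|\lambda_f(p)|^2)$ where the paper uses Cauchy's inequality. The Maass case is the Rankin--Selberg asymptotic with error $x^{3/5+\varepsilon}$ plus Cauchy--Schwarz, exactly as in the paper.

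Two discarded asides are inaccurate, though the argument does not use them. First, Shiu's bound for $d(n)$ on $(x,x+\sqrt{x}]$ is $\asymp\sqrt{x}\log x$, not $\sqrt{x}$; your displayed expression double-counts the factor $1/\log x$. Second, Kim--Sarnak does not give $|\lambda_\varphi(p^k)|\le A^k$ uniformly in $p$.
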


\begin{proof}
If $f \in S_k(\Gamma_0(N),\chi)$ is a primitive holomorphic cusp form, then it is known that
\[
\sum_{p\le x} \frac{|\lambda_f(p)|^2}{p} = \log \log x + O(1).
\]
For instance, this follows from \cite[Proposition 2.3]{rs} and partial summation. By Cauchy's inequality, 
\begin{equation} \label{primes_est}
\sum_{p\le x} \frac{|\lambda_f(p)|}{p} \le \sqrt{\sum_{p\le x} \frac{1}{p}} \ \sqrt{\sum_{p\le x} \frac{|\lambda_f(p)|^2}{p}} \le \log \log x + O(1).
\end{equation}
%for some $\delta>0$. {\color{red} This follows with = from the proof of Sato-Tate conjecture... should we cite this Rankin/Kim/etc? I don't know the proper reference offhand.}  Rankin worked with primitive cusp forms on the full modular group, but his argument can be extended to all primitive $f \in S_k(\Gamma_0(N),\chi)$ using known estimates for symmetric square and symmetric fourth power $L$-functions (e.g.~\cite{kim} and \cite{kimshahidi}). 
The multiplicative function $|\lambda_f(n)|$ belongs to the class $M$ defined by Shiu in \cite{shiu}. Therefore, using \eqref{primes_est} in \cite[Theorem 1]{shiu}, we derive the bound in \eqref{short1}.

\smallskip

To derive \eqref{short2}, we cannot use Shiu's result since it is not known if the coefficients $\lambda_\varphi(n)$ satisfy the Ramanujan--Petersson conjecture. Instead, we use the Rankin--Selberg estimate
\begin{equation} \label{short3}
\sum_{n\le x } |\lambda_\varphi(n)|^2 = c_\varphi \, x + O(x^{3/5+\varepsilon}), 
\end{equation}
which holds for some constant $c_\varphi>0$. An estimate of this form is classical for primitive holomorphic cusp forms. This estimate can be proved in the same way using the Rankin--Selberg $L$-function $L(s,\varphi \otimes \overline{\varphi})$ and applying \cite[Theorem 4.1]{cn}. Using \eqref{short3} and Cauchy's inequality, we see that
\[
    \sum_{x<n\le x+\sqrt{x}}  |\lambda_\varphi(n)| \le x^\frac14 \sqrt{\sum_{x<n\le x+\sqrt{x}}  |\lambda_\varphi(n)|^2 } \ll x^{\frac{11}{20}+\varepsilon}.
\]
This completes the proof of the lemma. 
\end{proof}

%{\color{red}{\sc Remark.} Using the proof of the Sato-Tate conjecture \cite{} for holomorphic cusp forms (or an argument of Rankin \cite{rankin}) along with \cite[Theorem 1]{shiu}, we can show that
%\[
%\sum_{x<n \le x+\sqrt{x}} |\lambda_f(n)| \ll_f \sqrt{x} \, (\log x)^{-\delta}, \quad \text{as } x \to \infty,
%\]
%for some $\delta>0$. Such a bound is not necessary for our proof of Theorem \ref{thm:holo}}. 

\subsubsection{Proof of Theorem \ref{thm:holo}}

Let $f \in S_k(\Gamma_0(N),\chi)$ be a primitive holomorphic cusp form. Then
 Proposition \ref{mainlemma} implies that
\begin{equation}\label{gl_2}
\begin{split}
\int_T^{2T}Z_f(t)\,\mathrm{d}t
&=\frac{2\pi}{\varepsilon_f^{1/2} N^{1/4}} \,e\!\left(\frac{k}{8}\right) 
\sum_{\mathcal{T}_f<n\leq 2\mathcal{T}_f}
\lambda_f(n) \,e\!\left(-\frac{n}{\sqrt{N}}\right) + O(T^{1/2}\log T)\\
&\qquad \qquad +\,O\!\left( T^{1/2}\max_{\alpha\in \{1,2\}}\left\{\sum_{\frac{3\alpha}{4}\mathcal{T}_f \leq n\leq \frac{5\alpha}{4}\mathcal{T}_f}
\frac{|\lambda_f(n)|}{|n\!-\!\alpha \mathcal{T}_f|+\sqrt{T}}\right\}\right).
\end{split}
\end{equation}
%\[
%A_f=\frac{2\pi}{\varepsilon_f^{1/2} N^{1/4}} \,e\!\left(\frac{k}{8}\right).
%\]
%Throughout this section, all big-$O$ terms are allowed to depend on $f$. 
As in the proof of Theorem \ref{thm:GL1}, before estimating the main term, we first estimate the second big-$O$ term in \eqref{gl_2} and then we sum over dyadic intervals to recover the integral from $0$ to $T$.

\smallskip
 
When $\alpha=1$, we partition the interval $[3\mathcal{T}_f/4, 5\mathcal{T}_f/4]$ into
\[
\begin{split}
V_1&=\big[\mathcal{T}_f -\sqrt{T}, \mathcal{T}_f +\sqrt{T}\big], \quad
V_2=\big[3\mathcal{T}_f/4, \mathcal{T}_f-\sqrt{T}\big), \quad \text{and} \quad
V_3=\big(\mathcal{T}_f+\sqrt{T},5\mathcal{T}_f/4\big].
\end{split}
\]
Accordingly, we let 
\[
\Sigma_j=T^{1/2}\sum_{n\in V_j}\frac{|\lambda_f(n)|}{|n-\mathcal{T}_f|+\sqrt{T}},
\]
for $j=1,2,3$, and the second error term of \eqref{gl_2} is $O(\Sigma_1+\Sigma_2+\Sigma_3)$. When $n\in V_1$, we simply note that $\frac{1}{|n-\mathcal{T}_f|+\sqrt{T}}\leq \frac{1}{\sqrt{T}}$, and then the first estimate in Lemma \ref{shortsums} implies  that
\[
\Sigma_1\leq \sum_{n\in V_1}|\lambda_f(n)|\ll T^{1/2}.
\]
For $n\in V_2$, we further partition $V_2$ into
\[
V_2 = \bigcup_{j=1}^{J}U_j: = \bigcup_{j=1}^{J}\left\{n\in V_2:  \mathcal{T}_f-2^{j}\sqrt{T}\leq n<\mathcal{T}_f -2^{j-1}\sqrt{T}\right\},
\]
where $J=\big\lceil \log\big(\frac{\mathcal{T}_f}{4\sqrt{T}}\big)/\log(2)\big\rceil$. On each interval $U_j$, we have $|n-\mathcal{T}_f|\geq 2^{j-1}\sqrt{T}$. We partition $U_j$ into $2^{j-1}$ intervals of length $\sqrt{T}$ and then apply the first estimate in Lemma \ref{shortsums} to each of these $2^{j-1}$ intervals. It follows that
\[
T^{1/2}\sum_{n\in U_j}\frac{|\lambda_f(n)|}{|n-\mathcal{T}_f|+\sqrt{T}}\ll 2^{-j} \sum_{n\in U_j}|\lambda_f(n)|\ll T^{1/2}.
\]
Since $J=O(\log T)$, we deduce that $\Sigma_2\ll T^{1/2}\log T$. A similar calculation for $n\in V_3$ implies that $\Sigma_3\ll T^{1/2}\log T$. Thus, when $\alpha=1$, the second big-$O$ term in \eqref{gl_2} is $O(T^{1/2}\log T)$. Similarly, when $\alpha=2$, a straightforward adaptation of our calculation shows that this error term is again $O(T^{1/2}\log T)$, so
\[
\int_T^{2T}Z_f(t)\,\mathrm{d}t
=\frac{2\pi}{\varepsilon_f^{1/2} N^{1/4}} \,e\!\left(\frac{k}{8}\right) 
\sum_{\mathcal{T}_f<n\leq 2\mathcal{T}_f}
\lambda_f(n) \,e\!\left(-\frac{n}{\sqrt{N}}\right) + O(T^{1/2}\log T).
\]
Summing these estimates over the dyadic intervals $\{[2^{-j-1}T,2^{-j}T]\}_{j\geq 0}$, we obtain that
\begin{equation}\label{modint}
\int_{0}^{T}Z_f (t) \,\mathrm{d}t 
= \frac{2\pi}{\varepsilon_f^{1/2} N^{1/4}} \,e\!\left(\frac{k}{8}\right)  \sum_{n\leq \mathcal{T}_f} \lambda_f(n) \,e\!\left(-\frac{n}{\sqrt{N}}\right)+O(T^{1/2}\log T).
\end{equation}
A classical result of Wilton \cite{wilton} (see \cite[Theorem 5.3]{iwaniec1} for full generality) states that
\[
\sum_{n\leq x}\lambda_f(n) \, e(n\alpha) \ll_f x^{1/2}\log x
\]
uniformly for $\alpha \in \mathbb{R}$. Therefore, the sum over $n$ in \eqref{modint} is $O(T^{1/2}\log T).$
This completes the proof of Theorem \ref{thm:holo}. 

\subsubsection{Proof of Theorem \ref{thm:Maass}} Since the proof of this theorem is very similar to the proof of Theorem \ref{thm:holo}, we simply indicate the necessary changes. If $\varphi$ is a primitive Hecke--Maass cusp form of level $N$ with nebentypus  $\chi$, then Proposition \ref{mainlemma} implies that
\begin{equation*}
\begin{split}
\int_T^{2T}Z_\varphi(t)\,\mathrm{d}t
&=\frac{2\pi}{\varepsilon_\varphi^{1/2} N^{1/4}}  \,e\!\left(\frac{\delta}{4}\right) 
\sum_{\mathcal{T}_\varphi<n\leq 2\mathcal{T}_\varphi}
\lambda_\varphi(n) \,e\!\left(-\frac{n}{\sqrt{N}}\right) + O(T^{1/2}\log T)\\
&\qquad \qquad +\,O\!\left( T^{1/2}\max_{\alpha\in \{1,2\}}\left\{\sum_{\frac{3\alpha}{4}\mathcal{T}_\varphi \leq n\leq \frac{5\alpha}{4}\mathcal{T}_\varphi}
\frac{|\lambda_\varphi(n)|}{|n\!-\!\alpha \mathcal{T}_\varphi|+\sqrt{T}}\right\}\right),
\end{split}
\end{equation*}
where $\mathcal{T}_\varphi = \sqrt{N}T/(2\pi)$. Estimating the second big-$O$ term as we did in Theorem \ref{thm:holo} but instead using the second estimate in Lemma \ref{shortsums}, we deduce that
\[
\int_T^{2T}Z_\varphi(t)\,\mathrm{d}t
=\frac{2\pi}{\varepsilon_\varphi^{1/2} N^{1/4}}  \,e\!\left(\frac{\delta}{4}\right) 
\sum_{\mathcal{T}_\varphi<n\leq 2\mathcal{T}_\varphi}
\lambda_\varphi(n) \,e\!\left(-\frac{n}{\sqrt{N}}\right) + O(T^{\frac{11}{20}+\varepsilon}).
\]
Summing these estimates over the dyadic intervals $\{[2^{-j-1}T,2^{-j}T]\}_{j\geq 0}$, we obtain that
\begin{equation}\label{maassint}
\int_0^{T}Z_\varphi(t)\,\mathrm{d}t
=\frac{2\pi}{\varepsilon_\varphi^{1/2} N^{1/4}}  \,e\!\left(\frac{\delta}{4}\right) 
\sum_{ n\leq \mathcal{T}_\varphi}
\lambda_\varphi(n) \,e\!\left(-\frac{n}{\sqrt{N}}\right) + O(T^{\frac{11}{20}+\varepsilon}).
\end{equation}
By \cite[Theorem 3]{hafner} (see also \cite{ehs} or \cite[Theorem 8.1]{iwaniec2}), we have
\[
\sum_{n\leq x}\lambda_\varphi(n)\, e(n\alpha) \ll_\varphi x^{1/2}\log x
\]
uniformly for $\alpha \in \mathbb{R}$. Therefore, the sum over $n$ in \eqref{maassint} is $O(T^{1/2}\log T).$
This completes the proof of Theorem \ref{thm:Maass}.

\section{Some observations on the distribution of \texorpdfstring{$C_\chi$}{C}}\label{section:distribution}

Recall that
\[
    C_\chi = \frac{2^\frac{3}{4}\pi^\frac{1}{4}e\left(\frac{1}{16}\right)}{3q^{\frac{1}{2}}\tau(\chi)^\frac{1}{2}} S(\chi).
\]
From our evaluation of $S(\chi)$, we see that
\[
    \abs{C_\chi} \le A_q,
\]
where $A_q = \frac{2^{3/4}\pi^{1/4} 2^a}{3 q^{1/4}}$ with
\[
    a = \begin{cases}
        \omega(q) + 1/2, & \text{ if $8 || q$}, \\
        \omega(q) + 1, & \text{ if $16 || q$}, \\
        \omega(q) + 3/2, & \text{ if $32 | q$}, \\
    \end{cases}
\]
and equality is achieved when $q$ is square-full (and $C_\chi \ne 0$).
For square-full $q$, then, the only question about the distribution of $C_\chi$
is its sign.

\smallskip

%The proof of Theorem \ref{thm:GL1} involves a choice of square root of $\G_\chi(s)$ in the region $\{\mathrm{Re}(s) \ge 1/2\} \setminus [1,\infty)$. The argument of $\G_\chi$ is the same as the argument of $\tau(\chi)$ on the segment $[1/2,1)$, so this is equivalent to a choice of square root of the Gauss sum, and we do not see a way of making this choice that is not somewhat arbitrary. A somewhat natural choice, perhaps (which is made by the LMFDB, for example), is to choose the square root so that $Z_\chi(0) > 0$, supposing that it never vanishes. As the argument of $L(1/2, \chi)^2$ is the argument of $\tau(\chi)$, this is the same as setting the argument of $\tau(\chi)^{1/2}$ to the argument of $L(1/2, \chi)$.

The proof of Theorem \ref{thm:GL1} involves a choice of square root of $\G_\chi(s)$ in the
region $\{\mathrm{Re}(s) \ge 1/2\} \setminus [1,\infty)$. The argument of
$\G_\chi$ is the same as the argument of $\tau(\chi)$ on the segment $[1/2,1)$,
so this is equivalent to a choice of square root of the Gauss sum, and we do not see a
way of making this choice that is not somewhat arbitrary. As stated after Theorem \ref{thm:GL1}, we use the square root for which $Z_\chi(0) > 0$ when $L(\frac12,\chi)\ne0$, which is also the choice made by the LMFDB. As the argument of $L(1/2, \chi)^2$ is the argument of $\tau(\chi)$,
this is the same as setting the argument of $\tau(\chi)^{1/2}$ to the
argument of $L(1/2, \chi)$.

\smallskip

Somewhat curiously, it seems that with this choice,
$C_\chi$ has a slight (presumably asymptotically negligible) positivity bias.
For example, up to modulus 500,000, there are 226,470,976 characters for which
$C_\chi$ is nonzero. We find that $C_\chi > 0 $ for 117,548,676
characters, which is about 51.9\% of the time. Up to modulus one million there
are 880,161,637 characters for which $C_\chi \ne 0$, and 51.53\% of
them are positive. Table \ref{table:Cchi-pos-neg} lists the number
of positive and negative values of $C_\chi$ for some square-full numbers with
small prime factors.

\begin{table}
\begin{tabular}{c|c|c|c||c|c|c|c}
    $q$ & \# pos. & \# neg. & \% pos. &
    $q$ & \# pos. & \# neg. & \% pos. \\
    \hline
    $2^3$ & 1 & 0 & 100              & $8\cdot5^{2}$ & 3 & 1 &            75    \\
    $2^4$ & 1 & 0 & 100              & $8\cdot5^{3}$ & 13 & 7 &           65    \\
    $2^5$ & 1 & 0 & 100              & $8\cdot5^{4}$ & 55 & 45 &          55    \\
    $2^6$ & 2 & 0 & 100              & $8\cdot5^{5}$ & 271 & 229 &        54.2  \\
    $2^7$ & 3 & 1 & 75               & $8\cdot5^{6}$ & 1331 & 1169 &      53.24 \\
    $2^8$ & 5 & 3 & 62.5             & $8\cdot5^{7}$ & 6425 & 6075 &      51.40 \\
    $2^9$ & 12 & 4 & 75              & $8\cdot5^{8}$ & 31975 & 30525 &    51.16 \\
    $2^{10}$ & 21 & 11 & 65.625      & $8\cdot5^{9}$ & 158315 & 154185 &  50.66 \\
    $2^{11}$ & 40 & 24 & 62.5        & $8\cdot5^{10}$ & 788433 & 774067 & 50.46 \\
    $2^{12}$ & 75 & 53 & 58.59       &      $8\cdot7^{2} $& 7 & 2               & 77.78     \\
    $2^{13}$ & 150 & 106 & 58.59     &      $8\cdot7^{3} $ & 40 & 23            & 63.49     \\
    $2^{14}$ & 283 & 229 & 55.27     &      $8\cdot7^{4} $ & 241 & 200          & 54.65     \\
    $2^{15}$ & 574 & 450 & 56.05     &      $8\cdot7^{5} $ & 1645 & 1442        & 53.29     \\
    $2^{16}$ & 1100 & 948 & 53.71    &      $8\cdot7^{6} $ & 11157 & 10452      & 51.63     \\
    $2^{17}$ & 2129 & 1967 & 51.98        & $8\cdot7^{7} $ & 77040 & 74223      & 50.93     \\
    $2^{18}$ & 4258 & 3934 & 51.98        & $8\cdot7^{8} $ & 534064 & 524777    & 50.44     \\
    $2^{19}$ & 8436 & 7948 & 51.49        &     $8\cdot11^{2}$ & 16 & 9             & 64    \\
    $2^{20}$ & 16787 & 15981 &      51.23 &     $8\cdot11^{3}$ & 159 & 116          & 57.82 \\
    $2^{21}$ & 33658 & 31878 &      51.36 &     $8\cdot11^{4}$ & 1599 & 1426        & 52.86 \\
    $2^{22}$ & 66585 & 64487 &      50.80 &     $8\cdot11^{5}$ & 17145 & 16130      & 51.53 \\
    $2^{23}$ & 133075 & 129069 &    50.76 &     $8\cdot11^{6}$ & 185397 & 180628    & 50.65 \\
    $2^{24}$ & 265419 & 258869 &    50.62 &     $8\cdot13^{2} $& 23 & 13 &          63.89   \\
    $2^{25}$ & 529188 & 519388 &    50.48 &     $8\cdot13^{3} $& 269 & 199 &        57.48   \\
    $2^{26}$ & 1056879 & 1040273 &  50.40    &  $8\cdot13^{4} $& 3167 & 2917 &      52.05   \\
    $2^{27}$ & 2111356 & 2082948 &  50.34    &  $8\cdot13^{5} $& 40351 & 38741 &    51.02   \\
    $8\cdot3^2$    & 1 & 0 & 100             &  $8\cdot13^{6} $& 519303 & 508893 &  50.51   \\
    $8\cdot3^3$    & 2 & 1 & 66.67            &  $8\cdot17^{2} $& 40 & 24 &          62.5    \\
    $8\cdot3^4$    & 7 & 2 & 77.78            &  $8\cdot17^{3} $& 584 & 504 &        53.68   \\
    $8\cdot3^5$    & 17 & 10 &          62.96&  $8\cdot17^{4} $& 9532 & 8964 &      51.54   \\
    $8\cdot3^6$    & 48 & 33 &          59.26&  $8\cdot17^{5} $& 159078 & 155354 &  50.59   \\
    $8\cdot3^7$    & 135 & 108 &        55.56&   $8\cdot3^2\cdot5^2$ & 3 & 1 &       75                      \\
    $8\cdot3^{8}$  & 381 & 348 &   52.26& $8\cdot3^2\cdot5^2\cdot7^2$ & 28 & 8 & 77.78  \\
    $8\cdot3^{9}$  & 1171 & 1016 & 53.54& $8\cdot3^2\cdot5^2\cdot7^2\cdot11^2$ & 539 & 361 & 59.89 \\
    $8\cdot3^{10}$ & 3436 & 3125 & 52.37& $8\cdot5^2\cdot7^2\cdot11^2$ & 507 & 393 & 56.33    \\
    $8\cdot3^{11}$ & 10185 & 9498 &51.74& $8\cdot7^2\cdot11^2\cdot13^2$ & 4289 & 3811 & 52.95 \\
    $8\cdot3^{12}$ & 30295 & 28754 & 51.30& $8\cdot11^2\cdot13^2\cdot17^2$ & 29815 & 27785 & 51.76 \\
    $8\cdot3^{13}$ & 89907 & 87240 & 50.75 & $16\cdot11^5$ & 17128 & 16147 & 51.47 \\
    $8\cdot3^{14}$ & 268801 & 262640 & 50.58 &  $16\cdot13^5$ & 40337 & 38755 & 51.00 \\
    $8\cdot3^{15}$ & 803747 & 790576 & 50.41 & $16\cdot17^5$ & 159825 & 154607 & 50.83 \\
    \hline
\end{tabular}
\\
\medskip
\caption{Some statistics on the positivity and negativity of $C_\chi$,
where the square root of $\tau(\chi)$ is chosen so that $Z_\chi(0) > 0$.}
\label{table:Cchi-pos-neg}
\end{table}

\smallskip

We now consider more generally the distribution of $C_\chi$ when
$q$ is not square-full. First we observe that from the fact that
$C_\chi$ must be a real number, one can make some deductions about signs
of ratios of Gauss sums.

\begin{proposition}\label{prop:gauss-sums-mod-2}
    Suppose that $\psi_2$ is a primitive Dirichlet character \textup{(mod $2^a$)}
    with $a \ge 4$
    and that $\chi_2$ is a primitive character \textup{(mod $2^{a-1}$)} such that
    $\psi_2^2 = \chi_2$. Let $\alpha$ be an integer such that
    $\psi_2(5) = e(\alpha/2^{a-2})$ (equivalently, $\chi_2(5) = e(\alpha/2^{a-3})$).
    If $a > 5$, and $a$ is even or $\alpha \equiv \pm 1 \mod 8$, then
    \[
        \frac{\tau(\psi_2)^2}{\tau(\chi_2)} = 2^{a/2 + 1/2} \, e\!\left(\qr{-4}{\alpha}\frac{1}{8}\right).
    \]
    If $a > 5$ is odd and $\alpha \equiv \pm 3 \mod 8$, then
    \[
    \frac{\tau(\psi_2)^2}{\tau(\chi_2)} = 2^{a/2 + 1/2} \, e\!\left(\frac{\alpha}{8}\right).
    \]
    If $a = 5$, then
    \[
        \frac{\tau(\psi_2)^2}{\tau(\chi_2)} = 8\,e\!\left(\frac{1}{2} - \frac{\alpha}{8}\right).
    \]
    If $a = 4$, then
    \[
        \frac{\tau(\psi_2)^2}{\tau(\chi_2)} = 4\sqrt{2}\,e\!\left(-\qr{-4}{\alpha}\frac{1}{8}\right).
    \]
\end{proposition}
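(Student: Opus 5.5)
The plan is to evaluate both Gauss sums explicitly with the ``$p$-adic stationary phase'' method (Postnikov's formula) and take the ratio. The cases $a=4$ and $a=5$ involve only finitely many characters (one pair $\psi_2,\overline{\psi_2}$ per admissible $\alpha$ modulo $2^{a-2}$), so, as in the proof of Proposition \ref{prop:S2-evenprime}, I would settle them by a direct finite computation. The real work is $a>5$.

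For $a>5$ I would write every unit modulo $2^a$ as $\pm 5^{j}$. Near $1$ I would use the $2$-adic logarithm to linearize $\psi_2$: there is a unit $\lambda$ with $\log_2 5=4\lambda$, and then $\psi_2(1+4y)=e\big(\alpha\,\overline{\lambda}\,\log_2(1+4y)/2^{a}\big)$ for $2$-adic $y$. This refines what Lemmas \ref{lem:gauss-sum} and \ref{lem:gauss-sum-evenprime} already do, where only the linear term of the logarithm was needed.

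Next I would split $\tau(\psi_2)=\sum_n \psi_2(n)e(n/2^a)$ by writing $n=m(1+2^{k}u)$ with $k=\lceil a/2\rceil-1$ or so. Expanding $\log_2(1+2^k u)=2^k u-2^{2k-1}u^2+\cdots$ truncates to a linear term in $u$ plus, depending on the parity of $a$, a quadratic term modulo $2$ or $4$. The linear part forces a stationary point $m\equiv m_0:=-\alpha\overline{\lambda}\pmod{2^{a-k}}$, as in the proof of Lemma \ref{lem:gauss-sum}. The leftover sum over $u$ (or over the residual freedom in $m$) is a small quadratic Gauss sum modulo $2$, $4$ or $8$. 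This should give
\[
\tau(\psi_2)=2^{a/2}\,\psi_2(m_0)\,e(m_0/2^a)\,\gamma_a(\alpha),
\]
where $\gamma_a(\alpha)$ is an explicit $8$th root of unity times $1$ or $\sqrt2$. I would do the same for $\chi_2$ modulo $2^{a-1}$, using the parameter $\alpha$ modulo $2^{a-3}$.

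Squaring $\tau(\psi_2)$ and dividing by $\tau(\chi_2)$, the factors $\psi_2(m_0)^2$ and $\chi_2(m_0')$ should cancel against each other, since $\psi_2^2=\chi_2$ and the two stationary points agree to the needed precision. The factors $e(2m_0/2^a)$ and $e(m_0'/2^{a-1})$ should cancel likewise, up to a correction coming from the $u^2$ term. What remains is $2^{a/2+1/2}$ times a ratio of small quadratic Gauss sums. That ratio depends only on $\alpha\bmod 8$ and on the parity of $a$, which is exactly the shape of the stated formulas. For $a$ even the residual sum should be a Gauss sum like $\sum_{u\bmod 4}e(\alpha u^2/4)$, which produces $e\big(\qr{-4}{\alpha}/8\big)$. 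For $a$ odd, the residual modulus-$8$ sum produces either $e\big(\qr{-4}{\alpha}/8\big)$ (when $\alpha\equiv\pm1\bmod 8$) or $e(\alpha/8)$ (when $\alpha\equiv\pm3\bmod 8$), as claimed.

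As a consistency check on the constant and sign, I would compare the result with Proposition \ref{prop:S1}: $C_\chi$ must be real, and for $q=2^{e_2}$ with $e_2\ge 6$ we have $S(\chi)=4\psi_2(-1)\tau(\psi_2)$, so $\tau(\psi_2)^2/\tau(\chi_2)$ has a prescribed argument there. The main obstacle is the bookkeeping of the $2$-adic expansion: the cubic term of $\log_2$, the relation between $\lambda$ and $\alpha$, and the exact choice of $k$ for odd versus even $a$. These are what fix the $8$th root of unity, and an off-by-one there flips $e(1/8)$ and $e(-1/8)$. I would first compute numerically for $a=6,7,8,9$ to pin down the pattern, then verify that the truncation is valid uniformly in $a$.
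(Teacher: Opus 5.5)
Your route is correct, but it is genuinely different from the paper's, which never evaluates a $2$-adic Gauss sum. After checking $a\le 6$ by computer, the paper does the following:
\begin{itemize}
\item it picks a prime $p>5$ with $-p\equiv\alpha \pmod 8$ and an even non-quadratic $\psi_p$ (mod $p$);
\item it sets $\chi=\chi_2\psi_p^2$ (mod $2^{a-1}p$), so that $S(\chi)=S_2(\psi,-1)\ne0$;
\item it factors $S_2(\psi,-1)$ and $\tau(\chi)$ into local pieces and uses only that $C_\chi^2>0$, i.e.\ that $C_\chi$ is real.
\end{itemize}
By \eqref{eq:S2jacobi}, the $p$-part is a known unit times $2\,\mathrm{Re}(J(\psi_p,\epsilon_p)/\tau(\epsilon_p))+2\epsilon_p(-1)$, which has the sign of $\epsilon_p(-1)$. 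This pins down the argument of $\tau(\psi_2)^2/\tau(\chi_2)$; the modulus $2^{(a+1)/2}$ is automatic.

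In effect, the paper promotes your ``consistency check'' to a proof, adjoining an auxiliary odd prime so that every class of $\alpha$ modulo $8$ is reached. That argument is short and reuses the paper's machinery. However, it is indirect, rests on a global fact, and needs extra $\epsilon_p(-2)$ bookkeeping for odd $a$. Your local computation is self-contained and yields each Gauss sum individually.

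The $2$-adic bookkeeping you were worried about does close:
\begin{itemize}
\item Since $\log_2 5\equiv-4 \pmod{64}$, we get $\lambda\equiv-1 \pmod{16}$, hence $m_0=-\alpha\overline{\lambda}\equiv\alpha \pmod{16}$.
\item For even $a$, take $k=a/2-1$. The linear term forces $n\equiv m_0$ only modulo $2^{a/2-1}$, and the quadratic phase is $e(-\alpha\overline{\lambda}u^2/8)$.
\item For odd $a$, take $k=(a-1)/2$. The congruence is modulo $2^{(a-1)/2}$, and the quadratic phase is $e(-\alpha\overline{\lambda}u^2/4)$.
\end{itemize}
So the forced congruence is weaker than modulo $2^{a-k}$, and the residual sums are quadratic Gauss sums with denominators $8$ and $4$ (not $2$ or $4$ as you wrote). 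Evaluating them gives
\[
\tau(\psi_2)=2^{a/2}\,\psi_2(m_0)\,e\!\left(\frac{m_0}{2^a}\right)\times
\begin{cases}
e(m_0/8), & a\ \text{even},\ a\ge 6,\\
e\big(\qr{-4}{m_0}/8\big), & a\ \text{odd},\ a\ge 5,
\end{cases}
\]
for any integer $m_0\equiv-\alpha\overline{\lambda} \pmod{2^{a-2}}$. The same formula, with $a$ replaced by $a-1$ and the same integer $m_0$, holds for $\tau(\chi_2)$. The factors $\psi_2(m_0)^2=\chi_2(m_0)$ and $e(m_0/2^a)^2=e(m_0/2^{a-1})$ then cancel exactly, and the ratio comes out as stated for every $a\ge 6$.

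The even-exponent formula fails at modulus $16$. That is why $a=5$ has a different shape and, like $a=4$, must be done by hand, as you planned.
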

\begin{proof}
    The small cases can be computed directly, so suppose that $a > 6$.
    Choose a prime $p>5$ such that $-p \equiv \alpha \mod 8$. Let $\psi_p$
    be a primitive even non-quadratic character (mod $p$) and $\chi_p = \psi_p^2$, and
    put $\chi = \chi_2\chi_p$ and $\psi = \psi_2 \psi_p$, hence $\chi$ is a primitive character modulo $q=2^{a-1}p$. Then
    $S(\chi) = S_2(\psi, -1) \ne 0$ and we have the factorization
    \[
        S_2(\psi, -1) = \psi_2(p)^2 S_2(\psi_2, -p) \psi_p(2^a)^2 S_2(\psi_p, -2^a).
    \]
    If $a$ is even or if $2$ is a square (mod $p$), then
    \[
        S_2(\psi, -1) = \psi_2(p)^2 S_2(\psi_2, -p) \psi_p(2^a) S_2(\psi_p, -1)
    \]
    and from our evaluation of $S_2$ we find that
    \[
    \begin{split}
    S_2(\psi, -1)^2
        &= \psi_2(p)^4 S_2(\psi_2, -p)^2 \psi_p(2^a)^2 S_2(\psi_p, -1)^2 \\
        &=\psi_2(p)^4 (4\psibar_2(-p) \tau(\psi_2))^2 \psi_p(2^a)^2 (\tau(\psi_p) + \epsilon_p(-1) \tau(\epsilon_p \psi_p))^2  \\
        &=16 \psi_2(p)^2 \tau(\psi_2)^2 \psi_p(2^a)^2 (\tau(\psi_p) + \epsilon_p(-1) \tau(\epsilon_p \psi_p))^2.
    \end{split}
    \]
    Now, 
    \[
        C_\chi^2 = \frac{2^{3/2}\pi^{1/2}e(1/8)}{9q \tau(\chi)}S_2(\psi,-1)^2.
    \]
    Factoring the Gauss sum as $\tau(\chi) = \chi_2(p)\chi_p(2^{a-1})\tau(\chi_2) \tau(\chi_p)
    = \psi_2(p)^2\psi_p(2^{a-1})^2\tau(\chi_2) \tau(\chi_p)$ and putting in our evaluation
    of $S_2$, we have
    %\[
    %    \begin{split}
    %        \tau(\chi) &= \chi_2(p)\chi_p(2^{a-1})\tau(\chi_2) \tau(\chi_p) \\
    %                   &= \psi_2(p)^2\psi_p(2^{a-1})^2\tau(\chi_2) \tau(\chi_p)
    %    \end{split}
    %\]
    %and putting in our evaluation of $S_2$, we have
    \[
        C_\chi^2 = 16 \frac{2^{3/2}\pi^{1/2}e(1/8)}{9q}
        \frac{\tau(\psi_2)^2}{\tau(\chi_2)} \psi_p(4)
            \frac{(\tau(\psi_p) + \epsilon_p(-1) \tau(\epsilon_p \psi_p))^2}{\tau(\chi_p)}.
    \]
    Now, upon inserting $q=2^{a-1}p$, multiplying and dividing by a
    quadratic Gauss sum, and recalling (\ref{eq:S2jacobi}) we see that
    \[
    \begin{split}
        C_\chi^2 &= 16 \, e\!\left(\frac{1 - \epsilon_p(-1)}{8}\right)\frac{2^{3/2}\pi^{1/2}e(1/8)}{9\cdot2^{a-1} \sqrt{p} }
        \frac{\tau(\psi_2)^2}{\tau(\chi_2)}
            \left(\frac{\psi_p(4)}{\tau(\epsilon_p)}
            \frac{(\tau(\psi_p) + \epsilon_p(-1) \tau(\epsilon_p \psi_p))^2}{\tau(\chi_p)}\right)
                \\
            &= 32 \, e\!\left(\frac{2 - \epsilon_p(-1)}{8}\right)\frac{2^{3/2}\pi^{1/2}}{9\cdot2^{a-1} \sqrt{p} }
        \frac{\tau(\psi_2)^2}{\tau(\chi_2)}
            \left(\mathrm{Re}\left(\frac{J(\psi_p, \epsilon_p)}{\tau(\epsilon_p)}\right) + \epsilon_p(-1)\right).
                \\
    \end{split}
    \]
    We know that this quantity is a positive real number. Since $|J(\psi_p,\epsilon_p)/\tau(\epsilon_p)|=1$ and $C_\chi\ne0$, the last factor is nonzero and has the sign of $\epsilon_p(-1)$. So if
    $\epsilon_p(-1) = 1$, then
    \[
        e(1/8) \, \tau(\psi_2)^2/\tau(\chi_2)
    \]
    is positive. If $\epsilon_p(-1) = -1$, then
    \[
        e(-1/8) \, \tau(\psi_2)^2/\tau(\chi_2)
    \]
    is positive.

\smallskip

This covers the case when $a$ is even or 2 is a square (mod
$p$). When $a$ is odd and $2$ is not a square, we also need to
consider $\epsilon_p(-2)$, which is determined by $p$ (mod 8), which
is determined by $\alpha$ (mod 8), and similar calculations lead to the
second case of the proposition.
\end{proof}

Having evaluated the sign of $\tau(\psi_2)^2/\tau(\chi_2)$, we can
examine $C_\chi$ for characters modulo a power of 2 times an
odd prime squared to do the same for odd primes. We omit the
details of the following, which can also be deduced from
work of Pigno and Pinner \cite[Corollary 1.2]{pp-binomialprime}
and the observation that
$\frac{\tau(\psi_p)^2}{\tau(\psi_p^2)} = J(\psi_p,\psi_p)$.
\begin{proposition}
    Suppose that $\psi_p$ is a primitive Dirichlet character \textup{(mod $p^a$)}, where
    $p$ is an odd prime and $a > 1$. Let $\alpha$ be an integer such that
    $\psi_p(p\!+\!1) = e(\alpha/(p^{a-1}))$. Then
    \[
        \frac{\psi_p(4)\,\tau(\psi_p)^2}{\tau(\psi_p^2)} =
        \begin{cases}
            p^{a/2}, & \textrm{ if $a$ is even}, \\
            \epsilon_p(\alpha) \, p^{a/2}, & \textrm{ if $a$ is odd and $p \equiv 1 \mod 4$}, \\
            -i \, \epsilon_p(\alpha) \, p^{a/2}, & \textrm{ if $a$ is odd and $p \equiv 3 \mod 4$}. \\
        \end{cases}
    \]
    %or maybe we want to write
    %\[
    %    \frac{\psi_p(4)\tau(\psi_p)^2}{\tau(\psi_p^2)} =
    %    \begin{cases}
    %        p^{a/2} & \textrm{ if $a$ is even} \\
    %        \qr{-4}{p}\tau(\epsilon_p)\epsilon_p(\alpha)p^{(a-1)/2} & \textrm{ if $a$ is odd}. \\
    %    \end{cases}
    %\]
\end{proposition}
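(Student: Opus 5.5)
The plan is to reduce the ratio to a Jacobi sum and evaluate that sum by a $p$-adic stationary phase argument. Since $p$ is odd and $\psi_p$ is primitive modulo $p^a$ with $a>1$, the character $\psi_p^2$ is again primitive modulo $p^a$. The standard identity therefore gives
\[
\frac{\tau(\psi_p)^2}{\tau(\psi_p^2)}=J(\psi_p,\psi_p)=\sum_{n \bmod p^a}\psi_p(n)\,\psi_p(1-n),
\]
as the paper already notes. Write $\overline{2}$ for the inverse of $2$ modulo $p^a$ and substitute $n=\overline{2}(1+2y)$, so that $1-n=\overline{2}(1-2y)$ and $n(1-n)=\overline{4}(1-4y^2)$. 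This yields the key reduction
\[
\psi_p(4)\,J(\psi_p,\psi_p)=\sum_{y \bmod p^a}\psi_p(1-4y^2),
\]
which exactly absorbs the factor $\psi_p(4)$. It remains to evaluate $K:=\sum_{y}\psi_p(1-4y^2)$.

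For $a=2b$ even, I would write $y=u+p^b v$ with $u \bmod p^b$ and $v \bmod p^b$. Since $p^{2b}\equiv 0$, and $\psi_p(1+p^b t)=e(\beta t/p^b)$ is an additive character in $t$ (with $p\nmid\beta$ by primitivity, and $\beta\equiv\alpha \pmod p$ by the binomial congruence used in Lemma \ref{lem:gauss-sum-oddprime}), we get
\[
\psi_p(1-4y^2)=\psi_p(1-4u^2)\,e\!\left(\frac{-8\beta uv\,\overline{(1-4u^2)}}{p^b}\right).
\]
Summing over $v$ kills every $u\not\equiv 0 \pmod{p^b}$. The class $u=0$ contributes $p^b\psi_p(1)=p^{a/2}$, which gives the first case.

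For $a=2b+1$ odd, I would take $y=u+p^b v$ with $u \bmod p^b$ and $v \bmod p^{b+1}$. Now $\psi_p(1+p^b t)$ is no longer additive in $t$. I would handle this in two steps. First, sum over the residue of $v$ modulo $p$ with $v\bmod p^{b}$ fixed, or more simply use the additive character $\psi_p(1+p^{b+1}t)$ on the part $v=v_0+pv_1$; the $v_1$-sum again forces $u\equiv 0 \pmod {p^b}$, i.e.\ $u=0$. Second, on $u=0$ what remains is $p^b\sum_{v_0 \bmod p}\psi_p(1-4p^{2b}v_0^2)$. By the congruence $(p+1)^{p^{a-2}}\equiv 1+p^{a-1}$ from Lemma \ref{lem:gauss-sum-oddprime}, we have $\psi_p(1+p^{2b}t)=e(\alpha t/p)$, so this becomes
\[
p^b\sum_{v_0 \bmod p}e\!\left(\frac{-4\alpha v_0^2}{p}\right)=p^b\,\epsilon_p(-\alpha)\,\tau(\epsilon_p).
\]
Inserting $\tau(\epsilon_p)=\sqrt p$ for $p\equiv1 \pmod 4$ and $\tau(\epsilon_p)=i\sqrt p$ for $p\equiv 3\pmod 4$, together with $\epsilon_p(-1)=\pm1$ accordingly, gives $\epsilon_p(\alpha)p^{a/2}$ and $-i\,\epsilon_p(\alpha)p^{a/2}$ respectively.

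I expect the main obstacle to be the odd case, specifically the bookkeeping that justifies the orthogonality step. One must check that, after splitting $v$, the cross term $8uvp^b$ and the square term $4p^{2b}v^2$ interact correctly. In particular, for $u\not\equiv0$ the linear phase must still cancel in the $v_1$-sum, because the square term only sees $v_0$ modulo $p^{b+1}$, and one must also verify that the surviving phase involves $\alpha$ modulo $p$ and nothing finer. If this splitting becomes messy, the fallback is to evaluate $\tau(\psi_p)$ and $\tau(\psi_p^2)$ separately by the same stationary phase method (which gives $\tau(\psi_p)=p^{a/2}\psi_p(m_0)e(m_0/p^a)$, times a quadratic Gauss sum factor when $a$ is odd, with $m_0$ the critical point) and then take the ratio, or alternatively to cite Pigno--Pinner as the paper indicates.
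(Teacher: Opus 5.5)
Your proposal is correct, and it takes a genuinely different route from the paper's proof.

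\textbf{The paper's route.} The paper never evaluates the Jacobi sum. It embeds $\psi_p^2$ into an auxiliary character $\chi=\chi_2\psi_p^2\chi_{p'}$ modulo $2^{a_2}p^ap'$, choosing $a_2$ and $p'$ so that condition (ii) of Proposition \ref{prop:S1} holds and $C_\chi\neq0$. It then factors $C_\chi^2$ through the evaluation of $S_2$. The phase of $\psi_p(4)\tau(\psi_p)^2/\tau(\psi_p^2)$ is read off from two facts: $C_\chi^2>0$ (from the reality of $C_\chi$, via Theorem \ref{thm:GL1} or the Appendix), and the $2$-adic factor $\tau(\psi_2)^2/\tau(\chi_2)$ is already determined in Proposition \ref{prop:gauss-sums-mod-2}. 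The modulus $p^{a/2}$ comes for free. Alternatively, the paper cites Pigno--Pinner.

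\textbf{Your route.} You evaluate $\psi_p(4)J(\psi_p,\psi_p)=\sum_y\psi_p(1-4y^2)$ by $p$-adic stationary phase, and this is self-contained. It needs no auxiliary moduli, no appeal to the reality of $C_\chi$, and none of the small $2$-power computations behind Proposition \ref{prop:gauss-sums-mod-2}. It also explains the shape of the answer. For even $a$, only $y=0$ survives. For odd $a$, the degenerate residual sum over $y=p^bv_0$ is the quadratic Gauss sum $\epsilon_p(-\alpha)\tau(\epsilon_p)$, which produces both $\epsilon_p(\alpha)$ and the factor $-i$ when $p\equiv 3\pmod 4$.

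\textbf{What the paper's route buys.} It costs nothing once the machinery for $C_\chi$ is in place, and it shows that the reality of $C_\chi$ encodes this arithmetic. However, it is only sketched and is logically much heavier.

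\textbf{Your odd-case worry.} The bookkeeping you flag is harmless. Put $u'=u+p^bv_0$, which runs modulo $p^{b+1}$, so that $y=u'+p^{b+1}v_1$ with $v_1$ modulo $p^b$. Since $2(b+1)>a$, the map $t\mapsto\psi_p(1+p^{b+1}t)=e(\beta' t/p^b)$ is an additive character, and $p\nmid\beta'$ by primitivity of $\psi_p$. Hence the $v_1$-sum vanishes unless $p^b\mid u'$; terms with $p\mid 1-4u'^2$ vanish identically. This leaves exactly $y=p^bv_0$. There $\psi_p(1-4p^{2b}v_0^2)=e(-4\alpha v_0^2/p)$ by the congruence $(p+1)^{p^{a-2}}\equiv 1+p^{a-1}\pmod{p^a}$. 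So no fallback is needed.
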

\begin{proof}
    If $\alpha$ is a square (mod $p$) or $2$ is not a square (mod $p$), then we
    can choose $q = 2^{a_2}p^a$ for some value of $a_2 \ge 6$ such that $2^{a_2 + 1} \alpha$
    is a square (mod $p$).
    If $\alpha$ is not a square and $2$ is, then we can choose
    some $q = 2^{a_2}p^ap'$ for some other prime $p'\ge 5$ which is not a square (mod $p$), so
    that $2^{a_2 + 1} \alpha p'$ is a square (mod $p$).

\smallskip

    Then for a primitive even $\chi_2$ (mod $2^{a_2}$) with $\chi_2(5) =
    e(\alpha_2/2^{a_2-2})$, $\alpha_2 \equiv -p^ap' \mod 8$ (with $p'$
    possibly equal to 1) and any even primitive character $\chi_{p'}$ (mod $p'$), the
    product $\chi = \chi_2 \psi_p^2 \chi_{p'}$ will satisfy the conditions of
    Proposition \ref{prop:S1} and $C_\chi \ne 0$. Then we can proceed as in
    the proof of the previous proposition and use the fact that $C_\chi^2$ is
    positive to get the result.
\end{proof}

%\Bnote{Confession: I have neither done the calculation described in the proof
%nor tried to deduce this from Pigno and Pinner. I think in Pigno and Pinner's corollary,
%is it not obvious that there exists such a simple statement. In our case, it is clear
%that one can get something like this; it seems like it might have depended on
%$p$ mod 8, though.}

%\Znote{The proofs of Prop 6 and 7 are beautiful!}

%\Znote{At first glance, I thought Proposition 7 looked routine, since it follows naturally from the analysis of $C_\chi$. But after you pointed out Pigno and Pinner’s corollary, I now feel that our Proposition 7 is {\bf truly elegant}. I completely agree that PP's corollary is far less effective than our version, as it involves a third primitive character and the relative multiplicative orders.}

Finally, putting these propositions together, we give an alternate expression for $C_\chi$.
\begin{proposition}\label{prop:Cchi-evaluation}
    Suppose that $C_\chi \ne 0$ for a primitive character $\chi \mod q$ with $q
    = \prod_{p|q} p^{e_p}$. Let $\psi = \prod_{p|q} \psi_p$ be a primitive
    character \textup{(mod $2q$)} such that $\psi^2 = \chi$. Write $q_2 =
    \prod_{p || q} p$. Then
\[
C_\chi^2 = \frac{2^{\frac{3}{2} + 2a} \pi^{\frac{1}{2}}}{9\, q^{\frac{1}{2}}}
    \prod_{p|q_2}
    \epsilon_p(-2q/p)
    \prod_{p|q_2}
        \left(2\mathrm{Re}\left(\frac{J(\psi_p,\epsilon_p)}{\tau(\epsilon_p)}\right) + 2\epsilon_p(-2q/p) \right),
\]
%where $A_q = \frac{2^{3/4}\pi^{1/4} 2^a}{3 q^{1/4}}$ with
where
\[
    a = \begin{cases}
        \omega(q/q_2) + 1/2, & \text{ if $8 || q$,} \\
        \omega(q/q_2) + 1, & \text{ if $16 || q$,} \\
        \omega(q/q_2) + 3/2, & \text{ if $32 | q$}. \\
    \end{cases}
\]
\end{proposition}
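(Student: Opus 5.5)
We compute $C_\chi^2$ directly from \eqref{C} and the factorization $S(\chi)=\Seven\,\Sodd$ in Proposition \ref{prop:S1}. By \eqref{C},
\[
C_\chi^2=\frac{2^{3/2}\pi^{1/2}e(1/8)}{9\,q\,\tau(\chi)}\,S(\chi)^2 .
\]
Working with the square has the advantage that the ambiguous choice of $\tau(\chi)^{1/2}$ drops out. We factor $\tau(\chi)=\prod_p \chi_p(q/p^{e_p})\tau(\chi_p)$ by the Chinese Remainder Theorem and use $\chi_p=\psi_p^2$. Then $C_\chi^2$ splits as the constant $2^{3/2}\pi^{1/2}e(1/8)/(9q)$ times a product of local factors. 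The factor at $2$ is $\Seven^2/(\chi_2(q/2^{e_2})\tau(\chi_2))$. Each odd $p$ contributes the square of the corresponding factor of $\Sodd$, divided by $\chi_p(q/p^{e_p})\tau(\chi_p)$. The characters $\psi_p(\cdot)^2$ evaluated at cofactors should cancel against $\chi_p(q/p^{e_p})$, leaving only factors of the form $\psi_p(4)$ and $\psi_2(\text{odd})^2$. These are exactly the factors that appear in the two preceding propositions.

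\textbf{The prime 2.} If $e_2\ge6$, we have $\Seven^2=16\,\psi_2(q/2^{e_2})^2\tau(\psi_2)^2$. Proposition \ref{prop:gauss-sums-mod-2}, applied with $a=e_2+1$, gives $\tau(\psi_2)^2/\tau(\chi_2)=2^{(e_2+2)/2}$ times an eighth root of unity. For $e_2\in\{3,4,5\}$ we use the explicit formula \eqref{Splus} together with a direct evaluation of $\tau(\chi_2)$. In every case the modulus of the $2$-factor is $2^{2\cdot(\text{local exponent})}\cdot 2^{e_2/2}$, with local exponent $1/2,1,3/2$ for $e_2=3,4,\ge5$. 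This produces the cases defining $a$ and contributes $2^{e_2/2}$ toward $q^{1/2}$.

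\textbf{Odd $p$ with $p^2\mid q$.} Here the factor is $4\,\psi_p(\cdot)^2\tau(\psi_p)^2/\tau(\psi_p^2)$. By the previous proposition this equals $4p^{e_p/2}$ times a unit ($1$, $\epsilon_p(\alpha)$, or $-i\,\epsilon_p(\alpha)$). Condition (ii) of Proposition \ref{prop:S1} says $\alpha_p q/p^{e_p}$ is a square mod $p$, which lets us express $\epsilon_p(\alpha)$ through $\epsilon_p(q/p^{e_p})$. Each such prime contributes $2^2$ to $2^{2a}$, accounting for $\omega(q/q_2)$, together with $p^{e_p/2}$.

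\textbf{Odd $p\,\|\,q$.} We use \eqref{eq:S2jacobi} with sign $\pm=\epsilon_p(-2q/p)$. Multiplying and dividing by $\tau(\epsilon_p)$, we get
\[
\frac{\psi_p(4)\bigl(\tau(\psi_p)\pm\tau(\epsilon_p\psi_p)\bigr)^2}{\tau(\psi_p^2)}=\tau(\epsilon_p)\Bigl(2\,\mathrm{Re}\tfrac{J(\psi_p,\epsilon_p)}{\tau(\epsilon_p)}\pm2\Bigr).
\]
Since $\tau(\epsilon_p)=\sqrt p$ or $i\sqrt p$, this yields $\sqrt p$ times a fourth root of unity. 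The $\sqrt p$ factors combine with the others to give $q^{-1/2}$ overall.

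\textbf{Main obstacle: the phases.} The hardest step is showing that all the roots of unity combine to exactly $\prod_{p\mid q_2}\epsilon_p(-2q/p)$. This involves $e(1/8)$, the eighth root from Proposition \ref{prop:gauss-sums-mod-2}, the factors $i$ from $\tau(\epsilon_p)$ for $p\equiv3\pmod4$, the $-i\,\epsilon_p(\alpha)$ from square-full primes, and the leftover character values $\psi_2(p)^2$ and $\psi_p(2^{e_2})$. The congruence condition (iii), $\alpha_2\equiv -q/2^{e_2}\pmod 8$ (or its analogue for small $e_2$), ties the $2$-adic phase to $q/2^{e_2}\bmod 8$. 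Reciprocity-type bookkeeping, $\prod \qr{-4}{p}$ against $q/2^{e_2}\bmod4$, should then cancel the $i$'s.

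Rather than tracking every phase, I would argue as follows. We know $C_\chi^2>0$, so the total phase is forced to be real and positive. Each factor $2\,\mathrm{Re}(\cdot)+2\epsilon_p(-2q/p)$ is real and nonzero with sign $\epsilon_p(-2q/p)$, because $|J/\tau|=1$ and $C_\chi\ne0$. The leftover unimodular constant must therefore equal $\prod\epsilon_p(-2q/p)$, provided one checks it is $\pm$ that product. This positivity shortcut, mirroring the proof of Proposition \ref{prop:gauss-sums-mod-2}, is what I would try first. The explicit phase computation is then needed only to confirm that the constant is real.
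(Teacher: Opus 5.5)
Your approach is the paper's proof. You factor $S(\chi)=S_2(\psi,-1)$ and $\tau(\chi)$ prime by prime to get
$C_\chi^2=B\,\frac{2^{3/2+2a}\pi^{1/2}}{9q^{1/2}}\prod_{p\mid q_2}\bigl(2\mathrm{Re}(J(\psi_p,\epsilon_p)/\tau(\epsilon_p))+2\epsilon_p(-2q/p)\bigr)$ with $|B|=1$. Then $C_\chi^2>0$, together with the sign of each real factor, forces $B=\prod_{p\mid q_2}\epsilon_p(-2q/p)$. Since $|B|=1$ and positivity already pin $B$ down, your hedges (``provided one checks it is $\pm$ that product'', and confirming reality via an explicit phase computation) are unnecessary. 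None of the phase bookkeeping is needed.

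One correction to your modulus count. Because $8\mid q$, the prime $2$ always divides $q/q_2$, so it is counted in $\omega(q/q_2)$. The $2$-adic factor $\Seven^2/(\chi_2(q/2^{e_2})\tau(\chi_2))$ therefore has modulus $2^{e_2/2}\cdot 2^{2c}$ with $c=3/2,\,2,\,5/2$ for $e_2=3,\,4,\,\ge 5$, not $c=1/2,\,1,\,3/2$. For instance, $|\Seven|^2=2^6$ when $e_2=3$. Your own $e_2\ge 6$ computation, $16\cdot 2^{(e_2+2)/2}=2^{5+e_2/2}$, already shows this. If you count only the odd primes with $p^2\mid q$ in $\omega(q/q_2)$, as you do, you end up a factor of $4$ short of the stated constant. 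You can check this against $q=8$: there $C_{\chi_8}^2=2^3\pi^{1/2}/9$, which forces $a=3/2=\omega(8)+1/2$.
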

\begin{proof}
We work as in the proofs of the previous propositions, and as in Equation
(\ref{eq:S2-evaluation}), factoring $S_2(\psi,-1)$ and the Gauss sum $\tau(\chi)$.
We will find that
\[
C_\chi^2 = B \frac{2^{\frac{3}{2} + 2a} \pi^{\frac{1}{2}}}{9\, q^{\frac{1}{2}}}
    \prod_{p|q_2}
        \left(2\mathrm{Re}\left(\frac{J(\psi_p,\epsilon_p)}{\tau(\epsilon_p)}\right) + 2\epsilon_p(-2q/p) \right)
\]
for some root of unity $B$. Since $C_\chi^2>0$ and each factor in the product is nonzero and has the sign of $\epsilon_p(-2q/p)$, we
know that $B = \prod_{p|q_2} \epsilon_p(-2q/p)$.
\end{proof}

\begin{figure}
    \includegraphics[width=\textwidth]{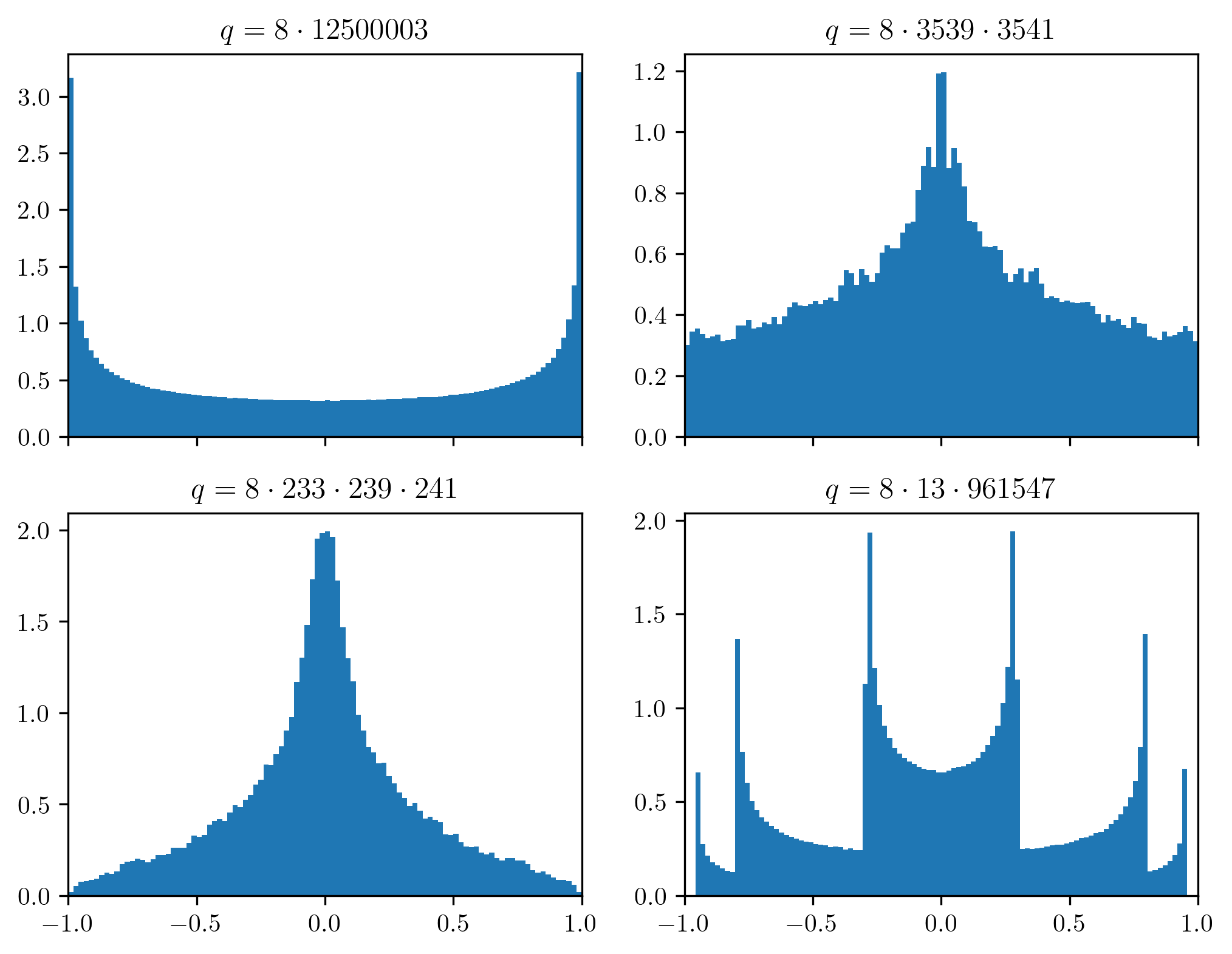}
    \caption{Normalized histograms of 
        $C_\chi/A_q$
        for the nonzero values of $C_\chi$ for primitive characters,
        mod $q$ for various $q$. 
    }
    \label{fig:Cchi-distribution}
\end{figure}

Now suppose that $q = 2^m p$ for an odd prime $p$.
As $\psi_p$ varies, the ratio
$J(\psi_p, \epsilon_p)/\tau(\epsilon_p)$ becomes equidistributed on the unit
circle as $p\to\infty$ (see the remark following Proposition \ref{prop:S2-oddprime}), and we do not see any reason why the vanishing or nonvanishing
of $S(\chi)$ should interact with this equidistribution.
%It is natural to expect that as $\psi_p$ varies, the ratio $\frac{J(\psi_p, \epsilon_p)}{\tau(\epsilon_p)}$ is equidistributed on the unit circle, and we do not see any reason why the vanishing or nonvanishingof $S(\chi)$ should interact with this supposed equidistribution.
Thus it seems that, after shifting and scaling, $C_\chi^2$ should be distributed
like the real part of a random variable uniformly distributed on the unit
circle. Provided that the choice of square root behaves as if it is independent
of everything else, we should then also expect that
$C_\chi/A_q$
behaves like the real part of a uniformly distributed random variable on the
unit circle (using the identity $\cos 2x = 2(\cos x)^2 - 1$). Specifically,
this suggests the following conjecture.
%\begin{conjecture}
%Fix $a\ge 3$, and for a prime $p$ let $q=2^a p$. Then for every interval $I\subseteq[-1,1]$,
%    as $p \rightarrow \infty$ over the primes,
%    \[
%        \frac{
 %       \#\left\{\textrm{primitive }\chi \mod{2^a p} \textrm{ such that }  \frac{C_\chi}{A_q} \in [\alpha,\beta]\setminus\{0\} \right\}}
  %      {\#\left\{\textrm{primitive }\chi \mod{2^a p} \textrm{ such that } C_\chi \ne 0 \right\}}
   %         \longrightarrow
    %        \frac{1}{\pi}
    %        \int_\alpha^\beta \frac{1}{\sqrt{1 - x^2}}\mathrm{d}x.
    %\]
%\end{conjecture}

\begin{conjecture}
Fix $m\ge 3$, and for a prime $p$ let $q=2^m p$. For any subinterval $[\alpha,\beta]\subseteq[-1,1]$,
    as $p \rightarrow \infty$ over the primes, we have
    \[
        \frac{
        \#\left\{\textrm{primitive }\chi \mod{q} \textrm{ such that }  \frac{C_\chi}{A_q} \in [\alpha,\beta]\setminus\{0\} \right\}}
        {\#\left\{\textrm{primitive }\chi \mod{q} \textrm{ such that } C_\chi \ne 0 \right\}}
            \longrightarrow
            \frac{1}{\pi}
            \int_\alpha^\beta \frac{1}{\sqrt{1 - x^2}}\,\mathrm{d}x.
    \]
\end{conjecture}

More generally, if $q = q_1 q_2$, where $q_1$ is square-full (and divisible by 8)
and $q_2$ is square-free, then for $\omega(q_2)$ fixed, we should expect
that as long as the smallest prime factor of $q_2$ tends to infinity,
$C_\chi/A_q$ should be asymptotically distributed like the product of $\omega(q_2)$
independent copies of the real part of a uniform random variable on
the unit circle. That is, we guess
\[
    \lim_{\substack{P^-(q_2) \rightarrow \infty \\ \omega(q_2) = k}}
    \frac{
        \#\left\{\textrm{primitive }\chi \mod{q} \textrm{ such that } C_\chi/A_q \in [\alpha,\beta]\setminus\{0\} \right\}}
        {\#\left\{\textrm{primitive }\chi \mod{q} \textrm{ such that } C_\chi \ne 0 \right\}}
            =
            \int_\alpha^\beta \phi_k(x) \mathrm{d}x,
\]
where $P^-(q_2)$ denotes the smallest prime factor of $q_2$ and $\phi_k(x)$ is
the density function of the random variable $\prod_{j=1}^k \cos(2 \pi X_j)$,
where the $X_j$ are independent and uniformly distributed on $[0,1]$.
%\Bnote{There may not be a nice expression for $\phi_k(x)$. I asked Marton
%Balasz and he said ``Number theorists always ask the most unnatural
%questions.'' Also, it might not be obvious that this is what the answer had to
%be. The product and the real part don't commute, of course, and taking
%the product first might have been more natural. It might be worth
%not being lazy and work out the computations which obviously will lead
%to this conjecture, but I haven't done that.}

\smallskip

If $q_2$ has a fixed small prime factor, then the distribution will be
distorted by the small number of values taken by $S_2(\psi_p,r)$ for this
small prime factor, and the distribution may tend to something else.
Figure \ref{fig:Cchi-distribution} plots the distribution of
$C_\chi$ for a few $q$ around one hundred million with differing
factorization patterns.
\begin{figure}
    \includegraphics[width=\textwidth]{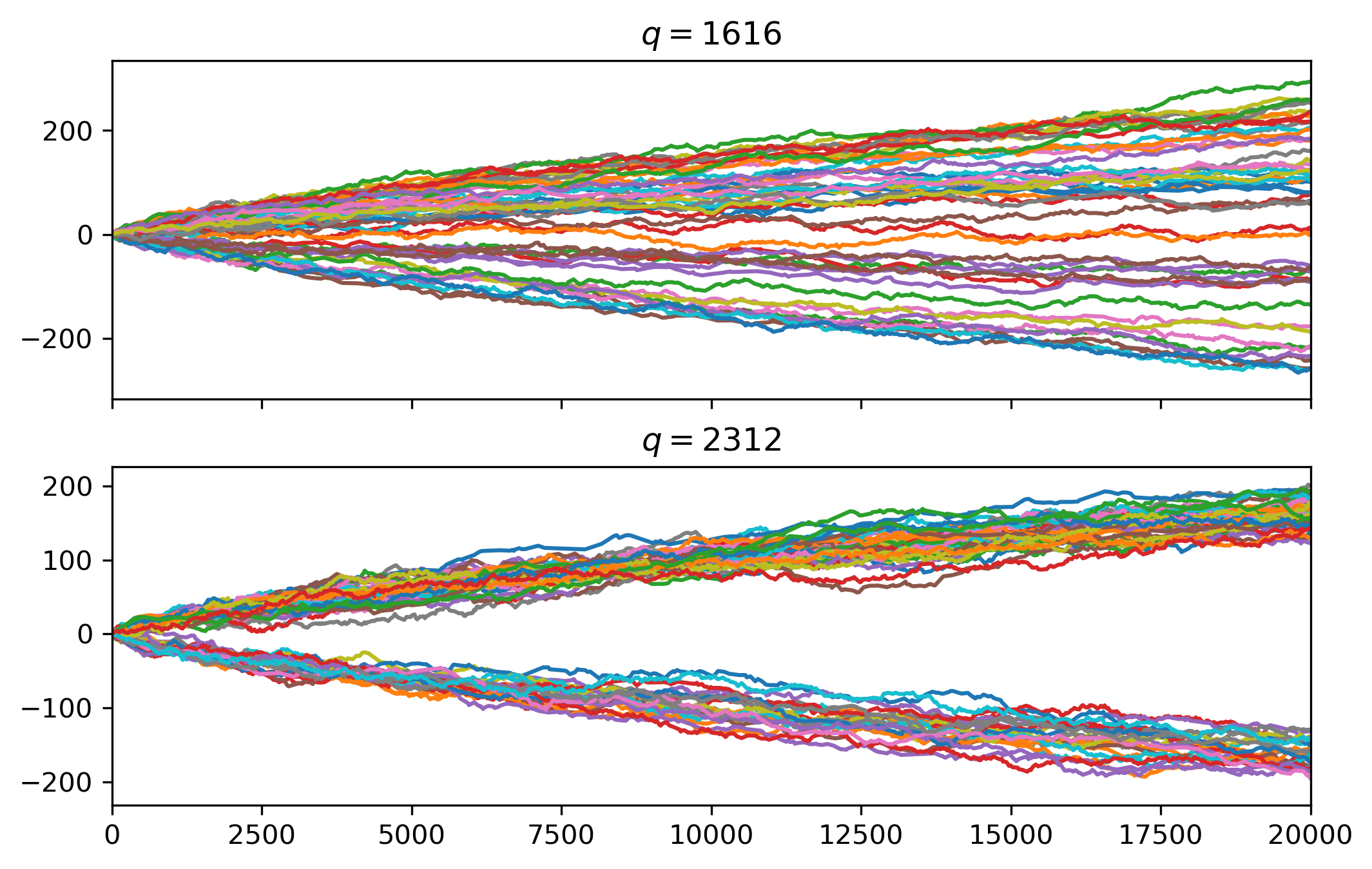}
    \caption{Plots of $M_\chi^+(T)-M_\chi^-(T)$, the difference between the amount of time
        that $Z_\chi(t)$ spends positive and negative,
        for the 49 characters (mod 1616) with $C_\chi \ne 0$
        and the 64 characters (mod 2312) with $C_\chi \ne 0$. 
    }
    \label{fig:Mplots}
\end{figure}

\begin{figure}
    \includegraphics[width=3in]{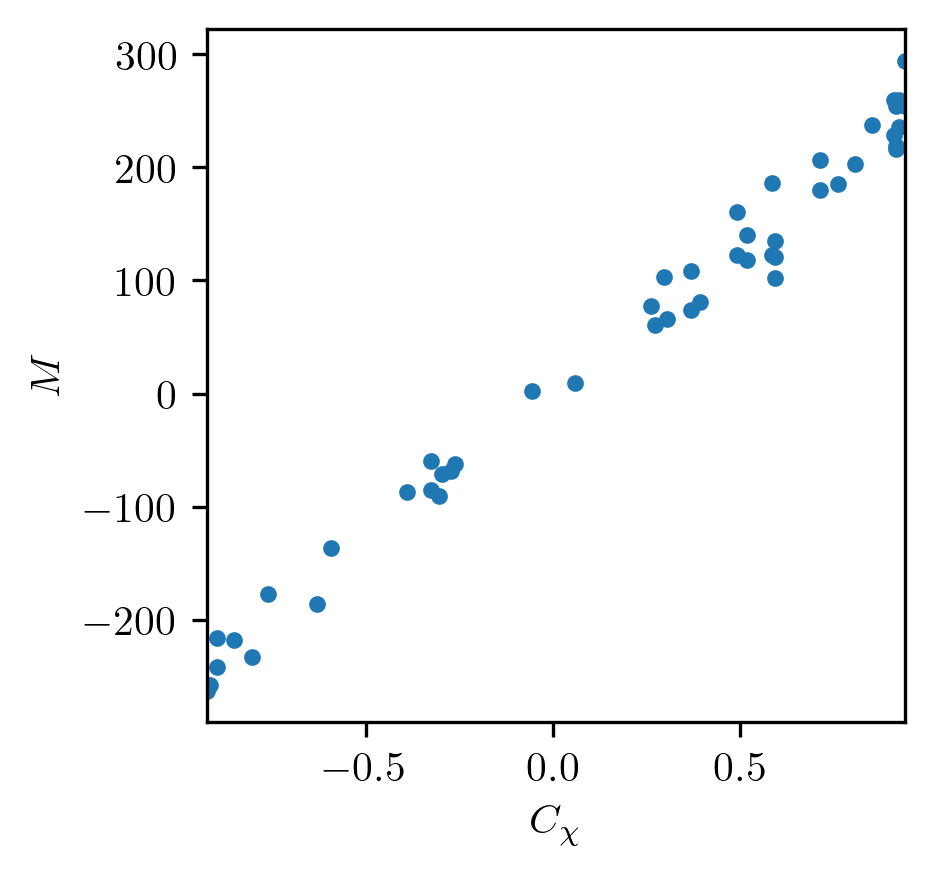}
    \caption{
        $C_\chi$ (horizontal axis) compared to
        $M_\chi = M^+_\chi(20000) - M^-_\chi(20000)$ (vertical axis)
        for the 49 characters mod 1616 for which $C_\chi \ne 0$.
    }
    \label{fig:MvsC}
\end{figure}

\smallskip

Finally, we briefly remark on the apparent relationship between $C_\chi$
and low order biases in the spacing between zeros of $L(s, \chi)$. Recall
from \eqref{eq:Mplusminus} that we define $M^+_\chi(T)$ and $M^-_\chi(T)$
respectively
to be the measures of the sets of $t \in [0,T]$ for which $Z_\chi(t)$
is positive and $Z_\chi(t)$ is negative. Equivalently, $M_\chi(T) := M_\chi^+(T) - M_\chi^-(T)$ measures the difference between the sum of the even-indexed gaps $g_{2n}$ and the sum of the odd-indexed gaps $g_{2n-1}$ between zeros of $L(s, \chi)$ on the critical line up to height $T$.

\smallskip

Figure \ref{fig:Mplots} presents plots of $M_\chi(T)$ for characters modulo
$1616 = 2^4\cdot101$ and $2312 = 2^3 \cdot17^2$, where $C_\chi$ does not
vanish. In the latter case, $\abs{C_\chi}$ is constant when it does not
vanish, as $2312$ is square-full. For 1616 we see a range of nonzero values
of $C_\chi$, and in Figure \ref{fig:MvsC} we see what seems to be a clear
linear relationship between $C_\chi$ and $M_\chi(T)$. A natural guess is
that there is some constant $A$ such that when $C_\chi \ne 0$ we have
\[
    M_\chi(T) \sim A \cdot C_\chi T^{3/4},
\]
though some logarithmic factors could appear as well.
%{\color{red}M: can we move all figures a few pages earlier to appear before the references start?}

%\clearpage

\appendix
\section{\texorpdfstring{A direct proof that $C_\chi$ is real}{A direct proof that C is real}}
\label{appendix}

In this appendix, we give a direct proof that the constant $C_\chi$ defined in Theorem \ref{thm:GL1} is real. 

\begin{lemma}
For every primitive character $\chi$ modulo $q$, we have
\[
S(\chi)=\frac{\tau(\chi)}{\sqrt{q}}\,
e(-1/8)\,\overline{S(\chi)}.
\]
In particular, $C_\chi$ is real.
\end{lemma}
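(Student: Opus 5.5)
We prove the identity with a reciprocity formula for quadratic Gauss sums, namely Poisson summation applied to the theta-type sum defining $S(\chi)$. First we reduce to the even case. If $\chi$ is odd, then $S(\chi)=0$ by the observation $S(\chi)=\chi(-1)S(\chi)$, and the identity holds trivially. Assume therefore that $\chi$ is even.

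Next we express $\chi$ through its Gauss sum. For primitive $\chi$ we have
\[
\chi(n)=\frac{1}{\tau(\overline{\chi})}\sum_{h \bmod q}\overline{\chi}(h)\,e(hn/q)
\]
for all $n$, including $(n,q)>1$, since primitivity makes $\sum_h\overline\chi(h)e(hn/q)=\chi(n)\tau(\overline\chi)$ hold for every $n$. Substituting this into $S(\chi)$ gives
\[
S(\chi)=\frac{1}{\tau(\overline\chi)}\sum_{h\bmod q}\overline\chi(h)\,G(h),\qquad G(h)=\sum_{n=1}^{2q}e\!\left(\frac{-n^2+2hn}{2q}\right),
\]
where $G(h)$ is a generalized quadratic Gauss sum modulo $2q$ with a linear term.

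Now evaluate $G(h)$ by completing the square. Since $-n^2+2hn=-(n-h)^2+h^2$ and $n\mapsto n-h$ permutes residues mod $2q$ (the summand is $2q$-periodic in $n$), we get
\[
G(h)=e\!\left(\frac{h^2}{2q}\right)\sum_{n=1}^{2q}e\!\left(\frac{-n^2}{2q}\right).
\]
The classical evaluation $\sum_{n=1}^{2q}e(-n^2/(2q))=\sqrt{2q}\,e(-1/8)(1+i^{2q})^{\!*}$ holds up to the standard normalization; concretely, by Landsberg--Schaar reciprocity,
\[
\sum_{n \bmod 2q}e\!\left(\frac{-n^2}{2q}\right)=\sqrt{q}\,(1-i)=\sqrt{2q}\,e(-1/8).
\]
Hence
\[
S(\chi)=\frac{\sqrt{2q}\,e(-1/8)}{\tau(\overline\chi)}\sum_{h\bmod q}\overline\chi(h)\,e\!\left(\frac{h^2}{2q}\right).
\]
The remaining sum must be interpreted as a sum over $h \bmod 2q$ divided by $2$. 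The factor $e(h^2/2q)$ is $2q$-periodic but not $q$-periodic in general, so this requires care in the shift step. The cleaner route is to run the expansion with $h$ modulo $2q$ from the start, using $\chi(n)=\frac{1}{2\tau(\overline\chi)}\sum_{h\bmod 2q}\overline\chi(h)e(hn/q)$. This yields $\frac12\sum_{h\bmod 2q}\overline\chi(h)e(h^2/2q)=\frac12\overline{S(\chi)}$, because $\chi$ is even. Collecting factors gives
\[
S(\chi)=\frac{\sqrt{q}\,e(-1/8)}{\sqrt2\,\tau(\overline\chi)}\cdot\sqrt2\,\overline{S(\chi)}\cdot(\ldots),
\]
and we then use $\tau(\overline\chi)=\chi(-1)\overline{\tau(\chi)}=q/\tau(\chi)$ to arrive at $S(\chi)=\frac{\tau(\chi)}{\sqrt q}e(-1/8)\overline{S(\chi)}$.

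The main obstacle is bookkeeping the constant: the exact value of the Gauss sum modulo $2q$, which depends on $q \bmod 4$ through $1+i^{-2q}$, and the factor-of-$2$ normalization. Any mismatch in the powers of $\sqrt 2$ would show up immediately in the absolute values, and comparing these is the check we will use. For odd $q$ both sides vanish, so we may assume $q$ even, where the sum equals $\sqrt{2q}\,e(-1/8)$.

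Finally, reality of $C_\chi$ follows directly. Writing $C_\chi=c\,e(1/16)S(\chi)/\tau(\chi)^{1/2}$ with $c>0$, we compute
\[
\overline{C_\chi}=c\,e(-1/16)\overline{S(\chi)}/\overline{\tau(\chi)^{1/2}}.
\]
Since $\overline{\tau(\chi)^{1/2}}=\sqrt q/\tau(\chi)^{1/2}$ (because $|\tau(\chi)|=\sqrt q$), substituting $\overline{S(\chi)}=\sqrt q\,e(1/8)S(\chi)/\tau(\chi)$ gives $\overline{C_\chi}=C_\chi$.
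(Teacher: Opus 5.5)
Your route is the paper's: expand $\chi$ in additive characters via its Gauss sum, complete the square, evaluate the quadratic Gauss sum modulo $2q$, and recognize what remains as $\tfrac12\overline{S(\chi)}$. Your deduction that $C_\chi$ is real is correct. However, the one computation the proof consists of is wrong as written, and the ``collecting factors'' line, with its unspecified $(\ldots)$, hides this.

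The correct evaluation is
\[
\sum_{n \bmod 2q} e\!\left(\frac{-n^2}{2q}\right) = (1+i^{2q})\sqrt{q}\,e(-1/8),
\]
which equals $0$ for odd $q$ and $2\sqrt{q}\,e(-1/8)=(1-i)\sqrt{2q}$ for even $q$. For example, when $q=2$ the sum is $1-i+1-i=2-2i$. Both values you wrote are off by a factor of $\sqrt2$, in opposite directions. Your ``Landsberg--Schaar'' value $\sqrt{q}(1-i)=\sqrt{2q}\,e(-1/8)$ is too small. Your $\sqrt{2q}\,e(-1/8)(1+i^{2q})$ is too large.

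This is not cosmetic. With your value, your steps give $S(\chi)=\frac{\tau(\chi)}{\sqrt{2q}}e(-1/8)\overline{S(\chi)}$. The absolute-value check you propose would then force $S(\chi)=0$ for every $\chi$, contradicting $S(\chi_8)=8\,e(-1/16)$. With the correct value the constants close:
\[
S(\chi)=\frac{2\sqrt q\,e(-1/8)}{\tau(\overline\chi)}\cdot\tfrac12\overline{S(\chi)}, \qquad \tau(\overline\chi)=\chi(-1)\overline{\tau(\chi)}=\frac{q}{\tau(\chi)}\ \text{for even } \chi,
\]
which is the claimed identity.

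Your periodicity worry, which sent you on the $h \bmod 2q$ detour, disappears once you reduce to even $q$ at the outset, as the paper does. For odd $q$ both sides vanish. Since $e\big((h+q)^2/(2q)\big)=(-1)^q\,e\big(h^2/(2q)\big)$, the factor $e(h^2/(2q))$ is $q$-periodic for even $q$. Hence $\sum_{h \bmod q}\overline\chi(h)e(h^2/(2q))=\tfrac12\overline{S(\chi)}$ directly, and this step does not use the evenness of $\chi$. You need evenness only to convert $1/\tau(\overline\chi)$ into $\tau(\chi)/q$. The paper avoids even that by expanding $\chi(n)=\frac{\tau(\chi)}{q}\sum_{a}\overline\chi(a)e(-an/q)$ from the start. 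Finally, no Poisson summation or reciprocity enters beyond the classical value of the quadratic Gauss sum.
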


\begin{proof}
When $q$ is odd, the formula holds because $S(\chi)=0$.
Hence, we may assume that $q$ is even. For primitive characters
$\chi$ modulo $q$, we have
\[
\chi(n)=\frac{\tau(\chi)}{q}
\sum_{a=1}^q\overline{\chi}(a) \, e(-an/q),
\]
and thus
\begin{equation} \label{aa}
S(\chi)
=\frac{\tau(\chi)}{q}
\sum_{a=1}^q  \overline{\chi}(a)
\sum_{n=1}^{2q}
e\!\left(-\frac{n^2\!+\!2an}{2q}\right)=\frac{\tau(\chi)}{q}
\sum_{a=1}^q\overline{\chi}(a) \,
e\!\left(\frac{a^2}{2q}\right)
\sum_{n=1}^{2q}e\!\left(-\frac{(n\!+\!a)^2}{2q}\right)
\end{equation}
upon completing the square. The quadratic Gauss sum can be evaluated as
\[
\sum_{n=1}^{2q}e\!\left(-\frac{(n\!+\!a)^2}{2q}\right) =
\sum_{n=1}^{2q} e\!\left(-\frac{n^2}{2q}\right)
=\begin{cases}
0, & q\text{ odd},\\
2\sqrt{q}\,e(-1/8), & q\text{ even}.
\end{cases}
\]
Since $q$ is even, we have
\[
\sum_{a=1}^{q} \overline{\chi}(a) \,
e\!\left(\frac{a^2}{2q}\right)
=\frac12 \, \overline{S(\chi)},
\]
and therefore, inserting the previous two identities into \eqref{aa}, we conclude that
\[
S(\chi)=\frac{\tau(\chi)}{\sqrt{q}}\,
e(-1/8)\,\overline{S(\chi)}.
\]
Consequently, for either choice of square root, \[
\frac{e(1/16) \, S(\chi)}{\sqrt{\tau(\chi)}}\in\mathbb{R},
\]
and hence $C_\chi$ is real. This completes the proof of the lemma.
\end{proof}

\section*{Acknowledgments} 
\noindent JB is supported by the Heilbronn Institute for Mathematical Research.
ZG was supported in part by a fellowship from the Fields Institute.
MBM was supported in part by the NSF grant DMS 2401461. The authors would like
to thank Valentin Blomer, Andy Booker, Brian Conrey, Jerzy Kaczorowski, and the referees for a
number of useful comments that helped to improve the paper.

\hbadness=10000
\bibliographystyle{amsplain}

\begin{thebibliography}{10}

%\bibitem{aggarwal}{\sc K.~Aggarwal}, 
%\emph{Weyl bound for $\mathrm{GL}(2)$ in $t$-aspect via a trivial delta method},  
%J. Number Theory {\bf 208} (2020), 72--100.

\bibitem{bew}{\sc B.~C.~Berndt, R.~J.~Evans, and K.~S.~Williams},
\emph{Gauss and Jacobi Sums}, Canadian Mathematical Society series of monographs and advanced texts, Wiley, 1998.

%\bibitem{bmn}{\sc A.~R.~Booker, M.~B.~Milinovich, and N.~Ng},
%\emph{Subconvexity for modular form $L$-functions in the $t$ aspect}, 
%Adv.~Math. {\bf 341} (2019), 299--335.

\bibitem{cech}{\sc M.~\v{C}ech},
\emph{First moment of quadratic Dirichlet $L$-functions with secondary terms}, preprint, {\tt \href{https://doi.org/10.48550/arXiv.2509.03197}{arXiv:2509.03197v2}[math.NT]}, 2025.


\bibitem{cn}{\sc K.~Chandrasekharan and R.~Narasimhan},
\emph{Functional equations with multiple gamma factors and the average order of arithmetical functions}, Ann. of Math. {\bf 76(1)} (1962), 93--136.


\bibitem{chowla}{\sc S.~Chowla},
\emph{The Riemann hypothesis and Hilbert's tenth problem}, Norske Vid. Selsk. Forh. (Trondheim) {\bf 38} (1965), 62--64.

\bibitem{cfkrs}{\sc J.~B.~Conrey, D.~W.~Farmer, J.~P.~Keating, M.~O.~Rubinstein, and N.~C.~Snaith}, 
\emph{Integral Moments of $L$-functions}, Proc.~London Math.~Soc.~(3) {\bf 91} (2005), 33--104.

\bibitem{diaconu}{\sc A.~Diaconu}, 
\emph{On the third moment of $L(\tfrac{1}{2},\chi_d)$ I: The rational function field case}, J. Number Theory {\bf 198} (2019), 1--42.

\bibitem{dw}{\sc A.~Diaconu and I.~Whitehead}, 
\emph{On the third moment of $L(\tfrac{1}{2},\chi_d)$ II: the number field case}, J. Eur. Math. Soc. (JEMS) {\bf 23} (2021), no. 6, 2051--2070. 

\bibitem{ehs}{\sc C.~Epstein, J.~L.~Hafner, and P.~Sarnak},
\emph{Zeros of $L$-functions attached to Maass forms}, Math. Z. {\bf 190} (1985), 113--128.

\bibitem{evans}{\sc R.~J.~Evans}, 
\emph{Generalizations of a theorem of Chowla on Gaussian sums}, Houston J. Math. {\bf 3} (1977), no. 3, 343--349.

\bibitem{florea}{\sc A.~M.~Florea}, 
\emph{Improving the error term in the mean value of $L(\tfrac{1}{2},\chi)$ in the hyperelliptic ensemble}, Int. Math. Res. Not. IMRN (2017), no. 20, 6119--6148.

\bibitem{gz} {\sc P.~Gao and L.~Zhao},
\emph{First moment of quadratic Hecke $L$-functions with lower order term}, J. Number Theory {\bf 289} (2026), 160--185.

\bibitem{gonek}{\sc S.~M.~Gonek},
\emph{Mean values of the Riemann zeta-function and its derivatives},
Invent. Math. {\bf 75} (1984), 123--141. 

\bibitem{gi}{\sc S.~M.~Gonek and A.~Ivi\'c}, 
\emph{On the distribution of positive and negative values of Hardy's $Z$-function}, J. Number Theory {\bf 174} (2017), 189--201.

\bibitem{hafner}{\sc J.~L.~Hafner},
\emph{Some Remarks on Odd Maass Wave Forms}, Math. Z. {\bf 196} (1987), 129--132.

\bibitem{hamdar}{\sc M.~H.~Hamdar},
\emph{Hecke $L$-functions away from the central line}, Math. Ann. {\bf 394} (2026), no. 3, Paper No.~74.

%\bibitem{hb1} {\sc D. R. Heath-Brown}, \textit{Hybrid bounds for Dirichlet $L$-functions},
%Invent. Math., \textbf{47} (1978), no. 2, 149--170.

%\bibitem{hb2} {\sc D.~R.~Heath-Brown}, 
%\emph{Hybrid bounds for Dirichlet $L$-functions. II}, 
%Quart. J. Math. Oxford Ser. (2) {\bf 31} (1980), no. 122, 157--167.

\bibitem{ivic}{\sc A.~Ivi\'{c}},
\textit{On the integral of Hardy's function}, Arch. Math. {\bf 83(1)} (2004), 41--47.

%\bibitem{ivic}{\sc A. Ivi\'{c}},
%\textit{On some problems involving Hardy's function},
%Cent. Eur. J. Math. \textbf{8(6)} (2010), 1029--1040.


%\bibitem{huxley}{\sc H.~N.~Huxley},
%\emph{Area, Lattice Points and Exponential Sums},
%Oxford University Press, 1996.

\bibitem{iwaniec1}{\sc H.~Iwaniec},
\emph{Topics in classical automorphic forms}, 
American Mathematical Society, 1997.

\bibitem{iwaniec2}{\sc H.~Iwaniec},
\emph{Spectral methods of automorphic forms}, 
second edition, American Mathematical Society, 2002.



\bibitem{IK} {\sc H. Iwaniec and E. Kowalski}, 
\emph{Analytic Number Theory}, American Mathematical Society Colloquium Publications, 53. American Mathematical Society, Providence, RI, 2004.

%\bibitem{iwaniec3}{\sc H.~Iwaniec},
%\emph{Lectures on the Riemann Zeta-function}, 
%American Mathematical Society (2014).

%\bibitem{jls}
%{\sc J.~J\"a\"asaari, S.~Lester, A.~Saha},
%\emph{On fundamental Fourier coefficients of Siegel cusp forms of degree $2$}, J. Inst. Math. Jussieu, {\bf 22}(4) (2023), 1819--1869.

\bibitem{jutila1}{\sc M.~Jutila},
\emph{Atkinson's formula for Hardy's function},
J. Number Theory \textbf{129(11)} (2009), 2853--2878.

\bibitem{jutila2}{\sc M.~Jutila},
\emph{An asymptotic formula for the primitive of Hardy's function},
Ark. Mat. \textbf{49(1)} (2011), 97--107.

\bibitem{Katz}{\sc N.~M.~Katz}, \emph{Gauss sums, Kloosterman sums, and monodromy groups}, Annals of Mathematics Studies, 116, Princeton Univ.~Press, Princeton, NJ, 1988.

\bibitem{ks1}{\sc J.~P.~Keating and N.~C.~Snaith}, 
\emph{Random matrix theory and $\zeta(\tfrac{1}{2}+it)$}, Comm. Math. Phys. {\bf 214} (2000), 57--89.

\bibitem{ks2}{\sc J.~P.~Keating and N.~C.~Snaith}, 
\emph{Random matrix theory and $L$-functions at $s=\frac{1}{2}$}, Comm. Math. Phys. {\bf 214} (2000), 91--110.

%\bibitem{kim}
%{\sc H. H, Kim}, \emph{Functoriality for the exterior square of $GL_4$ and the symmetric fourth of $GL_2$}, J. Amer. Math. Soc. {\bf 16} (2003), no. 1, 139--183. 

%\bibitem{kimshahidi}
%{\sc H. H. Kim and F. Shahidi}, \emph{Cuspidality of symmetric powers with applications}, Duke Math. J. {\bf 112} (2002), no. 1, 177--197.

%\bibitem{kis}{\sc H.~Kim and P.~Sarnak},
%\emph{Refined estimates towards to Ramanujan and Selberg conjectures}, J. Amer. Math. Soc. {\bf 16}, (2003), 175--181.



\bibitem{korolev1}{\sc M.~A.~Korolev},
\emph{On the primitive of the Hardy function $Z(t)$}, Dokl. Math. {\bf 75(2)} (2007), 295--298.

\bibitem{korolev2}{\sc M. A. Korolev}, 
\textit{On the integral of the Hardy function $Z(t)$},
Izv. Math. \textbf{72(3)} (2008), 429--478.

\bibitem{lmfdb:character-labels}{\sc The {LMFDB} Collaboration},
\emph{The {L}-functions and modular forms database, Dirichlet Character Labels},
\url{https://www.lmfdb.org/Character/Dirichlet/Labels}, (2024), Online, accessed 2 September 2024.

%\bibitem{mv}{\sc P. Michel and A. Venkatesh},
%\textit{The subconvexity problems for $\mathrm{GL}_2$},
%Publ. Math. Inst. Hautes \'{E}tudes Sci. \textbf{111} (2010), 171--271.


%\bibitem{montgomery}{\sc H.~L.~Montgomery},
%\emph{Ten Lectures on the interface between analytic number theory and harmonic analysis}, CBMS Regional Conference Series in Mathematics, {\bf 84}., American Mathematical Society, (1994).


%\bibitem{mv}{\sc H.~L.~Montgomery and R.~C.~Vaughan},
%\emph{Multiplicative Number Theory I: Classical Theory},
%Cambridge University Press, 2006.

%\bibitem{ir}{\sc K.~Ireland and M.~Rosen},
%\emph{A classical introduction to modern number theory},
%Second Edition, Springer--Verlag New York, 1998.


%\bibitem{pp}{\sc V.~Pigno and C.~Pinner},
%\emph{Twisted monomial Gauss sums modulo prime powers},
%Funct.~Approx.~Comment.~Math. \textbf{51(2)} (2014), 285--301.
\bibitem{pp-binomialprime}{\sc V.~Pigno and C.~G.~Pinner},
\emph{Binomial character sums modulo prime powers},
J. Th\'eor. Nombres Bordeaux {\bf 28} (2016), no.~1, 39--53.

\bibitem{platt}
{\sc D.~J. Platt},
\emph{Isolating some non-trivial zeros of zeta}, Math. Comp. {\bf 86} (2017), no.~307, 2449--2467.
%\bibitem{postnikov}{\sc A.~G.~Postnikov}, 
%\emph{On the sum of characters with respect to a modulus equal to a power of a prime number},
%Izv.~Aka.~Nauk SSSR.~Ser.~Mat. {\bf 19} (1955), 11--16.

%\bibitem{rankin}
%{\sc R. A. Rankin}, 
%\emph{Sums of powers of cusp form coefficients. II}, Math. Ann. {\bf 272} (1985), no. 4, 593--600.

\bibitem{rs}{\sc Z.~Rudnick and P.~Sarnak}, 
\emph{Zeros of principal $L$-functions and random matrix theory}, Duke Math. J. {\bf 81} (1996), 269--322.

\bibitem{shiu}{\sc P.~Shiu}, 
\emph{A Brun--Titchmarsh theorem for multiplicative functions}, J. Reine Angew. Math. {\bf 313} (1980), 161--170.


\bibitem{tenenbaum}{\sc G.~Tenenbaum},
\emph{Introduction to Analytic and Probabilistic Number Theory},
Third Edition, American Mathematical Society, 2015.

\bibitem{titchmarsh}{\sc E.~C.~Titchmarsh}, 
\emph{The theory of the Riemann zeta-function}, 
Oxford University Press, 1988.

%\bibitem{washington}{\sc L.~C.~Washington},
%\emph{Introduction to Cyclotomic Fields}, 
%Second Edition, Springer--Verlag New York, 1996.

\bibitem{wilton}{\sc J.~R.~Wilton},
\emph{A note on Ramanujan's arithmetical function $\tau(n)$},
Math. Proc. Cambridge Philos. Soc. {\bf 25} (1929), 121--129.

%\bibitem{xi}{\sc P.~Xi}, {\color{red} Do we cite this?}\emph{Equidistributions of Jacobi Sums}, preprint, {\tt \href{https://doi.org/10.48550/arXiv.1809.04286}{arXiv:1809.04286v1}[math.NT]}, 2018.

%%%%%
%%%%%%
%%%%%%%%%%%
\end{thebibliography}

\end{document}